\documentclass{scrartcl}
\usepackage{amsmath,amssymb}
\usepackage{dsfont,amsthm}

\newcommand{\N}{{\mathds N}}
\newcommand{\R}{{\mathds R}}

\newcommand{\Var}{{\rm Var}}
\newcommand{\Cov}{{\rm Cov}}

\newtheorem{lemma}{Lemma}[section]
\newtheorem{proposition}[lemma]{Proposition}
\newtheorem{example}[lemma]{Example}
\newtheorem{remark}[lemma]{Remark}
\newtheorem{definition}[lemma]{Definition}
\newtheorem{theorem}{Theorem}[section]

\begin{document}

\pagestyle{myheadings}
\markright{Bootstrap for Near Epoch Dependent Processes}

\title{Bootstrap for the Sample Mean and for $U$-Statistics of Mixing and Near Epoch Dependent Processes}

\author{Olimjon Sh. Sharipov\thanks{Olimjon Sharipov was supported by DFG (German Research Foundation).} \thanks{Institute of Mathematics and Information Technologies, Uzbek Academy of Sciences, 29 Dormon Yoli Str., Tashkent, 100125, Uzbekistan} \and Martin Wendler \thanks{Martin Wendler was supported by the Studienstiftung des deutschen Volkes.} \thanks{Fakult\"{a}t f\"{u}r Mathematik, Ruhr-Universit\"{a}t Bochum, 44780 Bochum, Germany,  Email address:  Martin.Wendler@rub.de}}
\date{\today}

\maketitle

\begin{abstract}
The validity of various bootstrapping methods has been proved for the sample mean of strongly mixing data. But in many applications, there appear nonlinear statistics of processes that are not strongly mixing. We investigate the nonoverlapping block bootstrap sequences which are near epoch dependent on strong mixing or absolutely regular processes. This includes ARMA and GARCH-processes as well as data from chaotic dynamical systems. We establish the strong consistency of the bootstrap distribution estimator not only for the sample mean, but also for $U$-statistics, which include examples as Gini's mean difference or the $\chi^2$-test statistic.
\end{abstract}

\noindent {\itshape keywords:} strongly mixing sequences, near epoch dependence, $U$-statistics, block bootstrap

\noindent {\itshape AMS 2000 subject classification:} 62G09, 60G10

\section{Introduction}

\subsection{Dependent Random Variables}

In many statistical applications the data does not come from an independent stochastic process. A standard assumption of weak dependence is given by the strong mixing condition:
\begin{definition} Let $\left(X_n\right)_{n\in\N}$ be a stationary process. Then the strong mixing coefficient is given by
\begin{equation*}
\alpha (k) = \sup \left\{\left| P (A\cap B) - P (A) P (B) \right| : A \in \mathcal{F}^n_1, B \in \mathcal{F}^\infty_{n+k}, n \in \N \right\},
\end{equation*}
where $\mathcal{F}^l_a$ is the $\sigma$-field generated by r.v.'s $X_a, \ldots, X_l.$, and $\left(X_n\right)_{n\in\N}$ is called strongly mixing, if $\alpha (k)\rightarrow0$ as $k\rightarrow\infty.$
\end{definition}
For further information on strong mixing and a detailed description of other mixing conditions see Doukhan \cite{douk} and Bradley \cite{bra2}. However, this class of weak dependent processes excludes examples like linear processes with innovations that do not have a density or data from dynamical systems.

\begin{example}\label{exa1} Let $\left(Z_n\right)_{n\in\N}$ be independent r.v.'s with $P\left[Z_n=1\right]=P\left[Z_n=0\right]=\frac{1}{2}$ and
\begin{equation*}
 X_n=\sum_{k=n}^{\infty}\frac{1}{2^{k-n+1}}Z_k.
\end{equation*}
Then $X_{n=1}=2X_n$ [mod 1] and $\left(X_n\right)_{n\in\N}$ is not strong mixing, as
\begin{multline*}
\left|P\left[X_1\in\bigcup_{i=1}^{2^{(k-1)}}\left[(2i-2)2^{-k},(2i-1)2^{-k}\right],\ X_k\in\left[0,\frac{1}{2}\right]\right]\right.\\
\left.-P\left[X_1\in\bigcup_{i=1}^{2^{(k-1)}}\left[(2i-2)2^{-k},(2i-1)2^{-k}\right]\right]P\left[X_k\in\left[0,\frac{1}{2}\right]\right]\right|
=\frac{1}{2}-\frac{1}{2}\cdot\frac{1}{2}=\frac{1}{4}.
\end{multline*}
\end{example}
We will consider sequences which are near epoch dependent on strongly mixing or absolutely regular processes, as this class covers the example above and other interesting examples and data from other dynamical systems, which are deterministic except for the initial value. Near epoch dependent functionals of mixing processes have been studied for a long time, see for example Ibragimov \cite{ibra} or Billingsley \cite{bill}.

\begin{definition} Let $\left(X_n\right)_{n\in\N}$ be a stationary process.
\begin{enumerate}
 \item The absolute regularity coefficient is given by
\begin{equation*}
\beta (k) = \sup_{n\in\N}E \sup \{ \left| P (A / \mathcal{F}_{-\infty}^n) - P (A) \right| : A \in \mathcal {F}^\infty_{n + k}\},
\end{equation*}
and $\left(X_n\right)_{n\in\N}$ is called absolutely regular, if $\beta(k)\rightarrow0$ as $k\rightarrow\infty.$
\item We say that $\left(X_n\right)_{n\in\mathds{N}}$ is near epoch dependent on a process $(Z_n)_{n\in\mathds{Z}}$ with approximation constants $(a_l)_{l\in\mathds{N}}$, if
\begin{equation*}\
E \left| X_1 - E (X_1 | \mathcal {F}^l_{- l}) \right| \leq a_l \qquad l = 0, 1,2 \ldots
\end{equation*}
where $\lim_{l \rightarrow\infty} a_l = 0$ and $ \mathcal {F}_{-l}^l$ is the $\sigma$-field generated by $Z_{-l}, \ldots, Z_l.$
\end{enumerate}
\end{definition}

We consider $L^1$ near epoch dependence, which is a weaker assumption than the more commonly used $L^2$ near epoch dependence.

\begin{example}Expanding Dynamical Systems: Let $T:\left[0,1\right]\rightarrow\left[0,1\right]$ be a piecewise smooth and expanding map such that $\inf_{x\in\left[0,1\right]}\left|T'\left(x\right)\right|>1$. Then there is a stationary process $\left(X_n\right)_{n\in\N}$ such that $X_{n+1}=T\left(X_n\right)$ which can be represented as a functional of an absolutely regular process (see Hofbauer, Keller \cite{hofb}).
\end{example}

\begin{example}GARCH: Let $(Z_n)_{n\in\N}$ be a sequence of independent standard normal random variables and $X_n=\sigma_nZ_n$, where $(\sigma_n)_{n\in\N}$ is a random sequence with $\sigma_n^2=\alpha_0+\alpha_1X_{n-1}^2+\alpha_2\sigma_{n-1}^2$. Such GARCH(1,1) processes are used for modeling volatility clustering in financial data and are near epoch dependent with an exponential decay of the approximation constants, see Hansen \cite{hans}. 
\end{example}

\begin{example}Volterra series: Many causal processes can be represented by a Taylor series. Assume that $(Z_n)_{n\in\N}$ is a sequence of independent identically distributed and centered random variables with finite variance. We define
\begin{equation*}
X_n=\sum_{l=1}^\infty\sum_{u_1,\ldots,u_l=0}^\infty g_l(u_1,\ldots,u_l)Z_{n-u_1}\ldots Z_n{n-u_l}.
\end{equation*}
Let $g_l(u_1,\ldots,u_l)=0$ if $u_i=u_j$ for some $i,j$, then $X_n$ is near epoch dependent with approximation constants
\begin{equation*}
a_k\leq C\sqrt{\sum_{l=1}^\infty\sum_{u_1,\ldots,u_l=k}^\infty g^2(u_1,\ldots,u_l)}.
\end{equation*}
Volterra series can be understood as Taylor expansions for causual time series, so they include a broad class of models. For more details, see Rugh \cite{rugh}.
\end{example}

\subsection{$U$-Statistics}

$U$-statistics play an important role in nonparametric statistics because many estimators and test statistics can be written at least asymptotically as $U$-statistics. Well-known examples include the sample variance, Gini's mean difference, and the $\chi^2$ goodness of fit test statistic. A more recent example is the Grassberger-Procaccia dimension estimator. $U$-statistics can be described as generalized means, i.e. means of the values of a kernel function $h\left(X_{i_1},\ldots,X_{i_k}\right)$. For simplicity of notation, we concentrate on the case of bivariate $U$-statistics:

\begin{definition}A $U$-statistic with a symmetric and measurable kernel $h:\mathds{R}^2\rightarrow\mathds{R}$ is defined as
\begin{equation*}
 U_{n}\left(h\right)=\frac{2}{n\left(n-1\right)}\sum_{1\le i<j\leq n}h\left(X_{i},X_{j}\right).
\end{equation*}
\end {definition}

The key tool in the analysis of $U$-statistics is the Hoeffding-decomposition \cite{hoef} of $U_n\left(h\right)$ into a so-called linear part and a degenerate part
\begin{equation*}
U_{n}\left(h\right)=\theta+\frac{2}{n}\sum_{i=1}^{n}h_{1}\left(X_{i}\right)+U_n\left(h_2\right)
\end{equation*}
with
\begin{align*}
\theta&:=Eh\left(X,Y\right),\\
h_1(x)&:=Eh(x,Y)-\theta, \\
h_2(x,y)&:=h(x,y) - h_1(x) -h_1(y) -\theta.
\end{align*}
for $X$, $Y$ independent and with the same distribution as $X_1$. We will need some more technical conditions on the kernel:

\begin{definition}\label{def1} Let $\left(X_n\right)_{n\in\mathds{N}}$ be a stationary process.
 \begin{enumerate}
 \item A kernel has uniform $r$-moments, if there is a $M>0$ such that for all $k\in\mathds{N}_{0}$
\begin{equation*}
E\left|h\left(X_{1},X_{k}\right)\right|^{r}\leq M\quad\text{and}\quad E\left|h\left(X,Y\right)\right|^{r}\leq M
\end{equation*}
for $X$, $Y$ independent and with the same distribution as $X_1$.
\item A kernel $h$ is called $P$-Lipschitz-continuous with constant $L>0$, if
\begin{equation*}
 E\left[\left|h\left(X,Y\right)-h\left(X',Y\right)\right|\mathds{1}_{\left\{\left|X-X'\right|\leq\epsilon\right\}}\right]\leq L\epsilon
\end{equation*}
for every $\epsilon>0$, every pair $(X,Y)$ with the same common distribution as $(X_1,X_k)$ for some $k\in\mathds{N}$ or independent with the same distribution as $X_1$ and $(X',Y)$ also with one of these common distributions. With $\mathds{1}_A$, we denote the indicator function of a set $A$.
 \end{enumerate}
\end{definition}

The Lipschitz-continuity condition is rather mild, some kernel with junps satisfy it, see Dehling, Wendler \cite{dehl}. Furthermore, we have the following:
\begin{lemma}
 \begin{enumerate}
\item Let $h$ be a polynomial kernel of degree $d$, that is
\begin{equation*}
h\left(x,y\right)=\sum_{i=0}^d\sum_{j=0}^{d-i}c_{ij}\left(x^iy^j+x^jy^i\right).
\end{equation*}
If $E\left|X_1\right|^{d-1}<\infty$, then $h$ is $P$-Lipschitz-continuous.
\item Let $h$ be a $P$-Lipschitz-continuous kernel and $f:\R\rightarrow\R$ a Lipschitz-continuous function. Then $g\circ h$ is $P$-Lipschitz-continuous.
 \end{enumerate}
\end{lemma}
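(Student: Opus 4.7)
The plan is to handle the two parts separately, since part (2) is an essentially immediate consequence of the definition while part (1) requires the real work.

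For part (2), with $f$ (the function written $g$ in the conclusion) Lipschitz with constant $L_f$ and $h$ being $P$-Lipschitz with constant $L_h$, the pointwise bound $|f(h(X,Y)) - f(h(X',Y))| \leq L_f |h(X,Y) - h(X',Y)|$ holds. Multiplying by $\mathds{1}_{\{|X-X'|\le\varepsilon\}}$ and taking expectation gives the $P$-Lipschitz property of $f\circ h$ with constant $L_f L_h$, valid for every admissible joint distribution since $h$ is Lipschitz on all of them.

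For part (1), by linearity it suffices to estimate a single monomial contribution
$c_{ij}\bigl[(X^i - (X')^i) Y^j + (X^j - (X')^j) Y^i\bigr]$. The standard telescoping identity
$a^n - b^n = (a-b)\sum_{k=0}^{n-1} a^k b^{n-1-k}$
yields, for $i\ge 1$,
\begin{equation*}
|X^i - (X')^i| \le i\,|X-X'|\,(|X|+|X'|)^{i-1},
\end{equation*}
and on the event $\{|X-X'|\le\varepsilon\}$ this is bounded by $i\varepsilon(|X|+|X'|)^{i-1}$. Expanding the binomial, the problem reduces to showing that mixed moments of the form $E\bigl[|X|^{k}|X'|^{i-1-k}|Y|^{j}\bigr]$ are uniformly finite.

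Here lies the main technical point: the definition of $P$-Lipschitz continuity does not prescribe any particular joint distribution for $(X,X',Y)$, only that the marginal pairs $(X,Y)$ and $(X',Y)$ come from an allowed list. The fix is to invoke the three-factor H\"older inequality with exponents proportional to $k$, $i-1-k$, $j$, which yields
\begin{equation*}
E\bigl[|X|^{k}|X'|^{i-1-k}|Y|^{j}\bigr] \le (E|X|^{m})^{k/m}(E|X'|^{m})^{(i-1-k)/m}(E|Y|^{m})^{j/m},
\end{equation*}
where $m := i+j-1 \le d-1$ (using $i+j\le d$). Since $X$, $X'$, $Y$ all have the marginal distribution of $X_1$ and $E|X_1|^{m}\le 1+E|X_1|^{d-1}<\infty$, every such moment is bounded by a constant depending only on $d$ and $E|X_1|^{d-1}$. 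Collecting the finitely many terms, with the trivial cases $i=0$ and $(i,j)=(1,0)$ handled separately, produces a single constant $L$ depending on the coefficients $c_{ij}$, the degree $d$ and $E|X_1|^{d-1}$, as required. I expect the only mildly delicate step to be the careful bookkeeping in the generalized H\"older application so that the exponents in all cases stay at most $d-1$; apart from that, the argument is routine.
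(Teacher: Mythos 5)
Your proof is correct and follows essentially the same route as the paper: factor $X^i-(X')^i=(X-X')\sum_k X^k X'^{i-1-k}$, pull out the $\varepsilon$ from the indicator, and bound the remaining mixed moments by moments of $X_1$, with part (2) being immediate. You are in fact slightly more careful than the paper, since your generalized H\"older step correctly lands on the exponent $i+j-1\le d-1$ (matching the hypothesis $E|X_1|^{d-1}<\infty$), whereas the paper's final display writes $E|X_1|^{i+j}$, which appears to be an off-by-one slip.
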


\begin{proof}
 \begin{enumerate}
 \item We can concentrate on an expression of the form $g\left(x,y\right)=x^iy^j$, $i+j\leq d$:
\begin{align*}
&E\left[\left|g\left(X,Y\right)-g\left(X',Y\right)\right|\mathds{1}_{\left\{\left|X-X'\right|\leq\epsilon\right\}}\right]\\
=&E\left[\left|\left(X-X'\right)\left(X^{i-1}+X^{i-2}X'+\ldots+XX'^{i-2}+X'^{i-1}\right)Y^j\right|\mathds{1}_{\left\{\left|X-X'\right|\leq\epsilon\right\}}\right]\\
\leq& \epsilon E\left[\left|\left(X^{i-1}+X^{i-2}X'+\ldots+XX'^{i-2}+X'^{i-1}\right)Y^j\right|\right]\leq\epsilon iE\left|X_1\right|^{i+j}
\end{align*}
\item This is obvious.
 \end{enumerate}
\end{proof}

Whereas the limit theory for partial sums of weakly dependent processes is very well developed, much less attention has been paid to nonlinear statistics like $U$-statistics. The summands of $U_n\left(h_2\right)$ can be correlated, if the random variables $\left(X_n\right)_{n\in\mathds{N}}$ are dependent, so one has to establish generalized covariance inequalities to derive moment bounds for $U_n\left(h_2\right)$. Yoshihara \cite{yosh} considered absolutely regular processes, Denker and Keller \cite{denk} functionals of absolutely regular processes, and Dehling and Wendler \cite{dehl} strongly mixing sequences.

\subsection{Block Bootstrap}

In many statistical applications, for example in the determination of confidence bands, one faces the task to compute the distribution of a statistic $T_n=T_n\left(X_1,\ldots,X_n\right)$. This is a challenging problem if the random variables are dependent and the function $T_n$ is nonlinear. Therefore, block bootstrapping method are commonly used for nonparametric inference. There are different ways to resample blocks, for example the circular block bootstrap or the moving block bootstrap (for a detailed description of the different bootstrapping methods see Lahiri \cite{lahi}). For the circular block bootstrap, Shao and Yu \cite{shao} have shown that under strong mixing the distribution of the block bootstrap version $\bar{X}^\star_n$ of the sample mean converges almost surely to the same distribution as the sample mean $\bar{X}_n$. Peligrad \cite{peli} has proved asymptotic normality of $\bar{X}_n^\star$ under another set of conditions, which does not necessarily imply the central limit theorem for $\bar{X}_n$. Radulovic \cite{radu} has established weak consistency under very weak conditions.

Gon\c{c}alves and White \cite{gonc} have proved the weak consistency under near epoch dependence. As far as we know, strong consistency has not been proved under such conditions, neither for the sample mean nor for $U$-statistics.

We consider the nonoverlapping block bootstrap, proposed by Carlstein \cite{carl}, for the sample mean and for $U$-statistics. Let $\left(X_n\right)_{n\in\N}$ be a sequence of r.v.'s. Let $p\in\mathds{N}$ be the block length such that $ p = p (n) = o (n)$, $p \rightarrow \infty$ as $ n \rightarrow \infty$. We introduce the following blocks of indices and r. v.'s:
\begin{align*}
{\rm I}_i&= \left(X_{(i-1)p+1}, \ldots, X_{ip}\right),\displaybreak[0]\\
{\rm B}_i&=\left\{(i-1)p+1, \ldots, ip\right\},\quad i = 1, \ldots, k
\end{align*}
where $k = k (n) = \left[\frac{n}{p}\right]$ is the number of blocks. We consider a new sample $ X_1^*, \ldots, X_{kp}^*$, which is constructed by choosing randomly and independently blocks $k$ times with
\[
{\rm P}\left((X_1^*, \ldots, X_p^*)={\rm I}_i\right) =\frac{1}{{k}} \quad i=1,2, \ldots,k.
\]
As a bootstrap version of the sample mean we consider: 
\[
\bar{X}^*_{n,kp} = \frac{1}{kp}\sum^{kp}_{i=1} X^*_i.
\]
With $P^\star$, $E^\star$, $\Var^\star$ we denote the probability, expectation and variance conditionally on $\left(X_n\right)_{n\in\N}$. Note that
\begin{equation*}
{\rm E}^* \bar{X}^*_{n,kp} = \frac{1}{kp} \sum^{kp}_{i=1} X_{i} =: \bar{X}_{n,kp}.
\end{equation*}

The first aim of this paper is to prove the weak and the strong consistency of the nonoverlapping block bootstrap for sequences that are near epoch dependent on strongly mixing processes (Theorems \ref{theo6} and \ref{theo7}), as this class of weak dependent processes covers examples that do not satisfy the strong mixing conditions. 

Our second aim is to prove the weak and the strong consistency of the nonoverlapping block bootstrap for $U$-statistics. Although the estimation of the distribution for $U$-statistics is even more complicated than for the sample mean, there is only very little literature on the bootstrap for $U$-statistics. Bickel and Freedman \cite{bick} proved the validity of the bootstrap for nondegenerate $U$-statistcs of i.i.d. data, Arcones, Gin\'e \cite{arco}, Dehling, Mikosch \cite{deh3}, and Leucht, Neumann \cite{leuc} for degenerate $U$-statistics of i.i.d. data. Dehling and Wendler \cite{dehl} have shown that the weak constistency of the circular block bootstrap for nondegenerate $U$-statistics of strongly mixing or absolutely regular sequences.

\section{Main Results}

In this section, and in what follows, we denote by $\bar{X}_n$ the sample mean of the observations $X_1, \ldots,X_n,$ by $N(0, \sigma^2)$ a Gaussian r.v. with mean zero and variance $\sigma^2$ and by $C$ a constant which may depend on several parameters and might have different values even in one chain of inequalities. First we will give theorems for the nonoverlapping block bootstrap and general stationary sequences which are analogues to the results of Peligrad \cite{peli}, and Shao and Yu \cite{shao} for the circular block bootstrap.

\subsection{Bootstrap for the sample mean}

We formulate theorems for the nonoverlapping block bootstrap. We will first show the weak constistency under strong mixing and near epoch dependence on a strongly mixing process. The first part of the following theorem (strong mixing) can be found in the book of Lahiri \cite{lahi}.

\begin{theorem}\label{theo5}Let $\left(X_n\right)_{n\in\mathds{N}}$ be a stationary sequence r.v.'s with $E X_1 = \mu$, for some $\delta > 0$: $E \left| X_1\right|^{2 + \delta} < \infty$. Assume that $1/p(n)+p(n)/n\rightarrow0$ as $n\rightarrow\infty$ and that one of the two following conditions holds
\begin{enumerate}
 \item $\left(X_n\right)_{n\in\mathds{N}}$ is strongly mixing with $\sum_{k=1}^\infty\alpha^{\frac{\delta}{2+\delta}} (k) < \infty$. 
\item $\left(X_n\right)_{n\in\mathds{N}}$ is near epoch dependent (with approximation constants $\left( a_l\right)_{l\in\N}$) on a strongly mixing process process $ \left(Z_n\right)_{n\in\mathds{Z}}$ and $\sum^\infty_{k = 1}( a_k^{\frac{\delta}{1 + \delta}} + \alpha^{\frac{\delta}{2 + \delta}} (k)) < \infty$.
\end{enumerate}
Then $\sigma^2 = EX^2_1 + 2 \sum^\infty_{i =2} \Cov (X_1, X_i) <\infty$ and in the case $\sigma^2 > 0$
\begin{align}
\Var^* (\sqrt{kp} \bar{X}^*_{n,kp}) \rightarrow \sigma^2 \qquad &\mbox { in probability}\\
 \sup_{x\in\R} \left| P^*\left(\sqrt{kp} (\bar{X}_{n,kp}^* - \bar{X}_{n,kp}) \leq x \right) -P \left( \sqrt{n} (\bar{X}_n - \mu) \leq x\right)\right| \rightarrow 0 \qquad &\mbox{ in probability}
\end{align}

\end{theorem}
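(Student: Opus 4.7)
The overall plan is to apply the conditional Lindeberg--Feller CLT to the bootstrap sum. Writing $S_i := \sum_{j \in {\rm B}_i} X_j$ and $S_i^{*} := \sum_{j=(i-1)p+1}^{ip} X_j^{*}$, we have $\sqrt{kp}\,\bar{X}^{*}_{n,kp} = (kp)^{-1/2}\sum_{i=1}^{k} S_i^{*}$, and under $P^{*}$ the $S_i^{*}$ are i.i.d.\ uniform picks from $\{S_1,\dots,S_k\}$. Under either hypothesis the unconditional CLT $\sqrt{n}(\bar{X}_n - \mu) \to N(0,\sigma^2)$ is available from standard Herrndorf/Ibragimov-type results for strongly mixing or NED sequences, so it is enough to establish the two bootstrap ingredients in probability: the variance convergence (which is conclusion (1)) and the conditional Lindeberg condition. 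Once these hold, the subsequence principle combined with P\'{o}lya's theorem (the Gaussian limit has a continuous CDF) upgrades pointwise CDF convergence in probability to the uniform statement in conclusion (2).

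For the variance, a direct expansion gives
\begin{equation*}
\Var^{*}(\sqrt{kp}\,\bar{X}^{*}_{n,kp}) \;=\; \frac{1}{k}\sum_{i=1}^{k}\frac{(S_i - p\mu)^2}{p} \;-\; p\,(\bar{X}_{n,kp} - \mu)^2 .
\end{equation*}
The subtracted term is $O_P(p/n) = o_P(1)$ by the CLT, and by stationarity together with summability of covariances, $E(S_1 - p\mu)^2/p \to \sigma^2$. I would then prove a weak LLN for $Y_i := (S_i - p\mu)^2/p$ across the $k$ blocks via a second-moment estimate: under (1) the block sequence inherits strong mixing with coefficient bounded by $\alpha((j-i-1)p)$; under (2) I first approximate each $Y_i$ in $L^{1}$ by its conditional expectation given a $(p+2l)$-window of the underlying mixing $(Z_n)$, paying an error controlled by the approximation constants, and then apply the mixing bound to the approximants. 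Combined with a Rosenthal-type moment inequality yielding $E|S_1 - p\mu|^{2+\delta} \leq Cp^{1+\delta/2}$ in both settings, the mixing covariance inequality gives $\Var(k^{-1}\sum_i Y_i) \to 0$.

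For the Lindeberg condition, the i.i.d.\ structure under $P^{*}$ and Markov's inequality give, for every $\epsilon > 0$,
\begin{equation*}
\frac{1}{kp}\sum_{i=1}^{k} E^{*}\!\left[(S_i^{*} - p\bar{X}_{n,kp})^{2}\,\mathbf{1}_{\{|S_i^{*} - p\bar{X}_{n,kp}| > \epsilon\sqrt{kp}\}}\right] \;\leq\; \frac{1}{\epsilon^{\delta}\, p^{1+\delta/2}\, k^{1+\delta/2}} \sum_{i=1}^{k} |S_i - p\bar{X}_{n,kp}|^{2+\delta},
\end{equation*}
and the same Rosenthal-type bound together with Markov's inequality gives $k^{-1}\sum_i |S_i - p\bar{X}_{n,kp}|^{2+\delta} = O_P(p^{1+\delta/2})$, so the right-hand side is $O_P(k^{-\delta/2}) = o_P(1)$. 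The main obstacle will be the variance LLN in the NED case, because each $Y_i$ is a nonlinear functional of a whole block of length $p \to \infty$ of the original $X$-process, so one cannot apply a mixing coefficient to $X$ directly but must approximate $Y_i$ by a functional of the mixing $Z$-process without losing too much in $L^{1}$, exploiting $\sum a_k^{\delta/(1+\delta)} < \infty$ jointly with uniform $L^{1+\delta/2}$-bounds on $Y_i$. Once this LLN and the Lindeberg bound are in place, conditional Lindeberg--Feller followed by P\'{o}lya's theorem completes the proof.
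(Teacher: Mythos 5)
Your overall architecture (conditional Lindeberg--Feller for the i.i.d.\ block picks, plus P\'olya's theorem) is the right skeleton, but the two quantitative inputs you rely on are not available under the stated hypotheses, and this is where the actual difficulty of the theorem lives. First, the Rosenthal-type bound $E|S_p - p\mu|^{2+\delta} \leq C p^{1+\delta/2}$ does not follow from $\sum_k \alpha^{\delta/(2+\delta)}(k) < \infty$: Yokoyama's inequality (Lemma \ref{lem3} of the paper) gives an $s$-th moment bound only for $s < 2+\delta$ and only under $\sum_n n^{s/2-1}\alpha(n)^{(2+\delta-s)/(2+\delta)} < \infty$, which is strictly stronger than the theorem's summability condition for every $s>2$; in the NED case the available fourth-moment bound (Lemma \ref{lem6}) needs $E|X_1|^{4+\delta}<\infty$ and $k^2$-weighted summability, neither of which is assumed here. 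So your Lindeberg estimate, which divides by $\epsilon^{\delta}(kp)^{\delta/2}$ and needs $k^{-1}\sum_i|S_i-p\bar{X}_{n,kp}|^{2+\delta}=O_P(p^{1+\delta/2})$, has no proof. The standard repair is to note that $S_p'/\sqrt{p}\to N(0,\sigma^2)$ together with $E(S_p'^2)/p\to\sigma^2$ forces uniform integrability of $(S_p')^2/p$, whence $p^{-1}E\bigl[(S_p')^2\mathds{1}_{\{(S_p')^2>\epsilon^2 kp\}}\bigr]\to 0$ because $k\to\infty$; no moment of order above $2$ is needed. Second, and more seriously, your weak LLN for $Y_i=(S_i-p\mu)^2/p$ via $\Var(k^{-1}\sum_i Y_i)\to 0$ requires control of $\|Y_i\|_r$ for some $r$ with $2/r<1$, i.e.\ moments of $S_p$ of order larger than $4$; with only $(2+\delta)$-moments of $X_1$ this fails whenever $\delta\leq 2$. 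A truncation of $Y_i$ (variance bound for the bounded part, uniform integrability for the tail) is needed even in the strong mixing case.

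In the NED case there is an additional structural obstacle you identify but underestimate: approximating $Y_i$, a square of a block sum, by a functional of a finite window of $(Z_n)$ costs an $L^1$ error of the form $p^{-1}E\bigl[|S_i-\tilde S_i|(|S_i|+|\tilde S_i|)\bigr]$, and with only $L^1$ approximation constants and $(2+\delta)$-moments this H\"older step does not close. The paper avoids the problem entirely: it sets $X_n(\nu)=E[X_n\mid Z_{n-\nu},\dots,Z_{n+\nu}]$, observes that for each fixed $\nu$ this is a strongly mixing sequence to which the already-proved strong mixing case applies verbatim, and then only has to show that the bootstrap variance of the remainder $X_n-X_n(\nu)$ is uniformly small in $\nu$ --- a computation involving nothing beyond second moments and covariances. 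You should either adopt that reduction or supply the missing truncation/uniform-integrability arguments; as written, both the variance LLN and the Lindeberg step are gaps.
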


\begin{remark} This Theorem is also valid for the moving or the circular block bootstrap, we skip the introduction of these two bootstrap methods. This Theorem weakens the moment assumption of Gon\c{c}alves and White \cite{gonc} (they considered not only sequences, but also triangular arrays.)
\end{remark}

The first part of the following theorem (strong mixing) is analogues to the results of Peligrad \cite{peli} and Shao, Yu \cite{shao} for the circular block bootstrap, showing strong consistency.
\begin{theorem}\label{theo6}Let $\left(X_n\right)_{n\in\mathds{N}}$ be a stationary sequence r.v.'s with $E X_1 = \mu$, $p(n)\rightarrow\infty$,
\begin{equation}\label{line12}
p (n) \leq C n^\epsilon \mbox{ for some } 0 < \epsilon < 1 \mbox { and }
\end{equation}
\begin{equation}\label{line13}
p (n) = p(2^l) \mbox { for } 2^l < n \leq 2^{l + 1},\qquad l = 1, 2, \ldots
\end{equation}
and assume that one of the two following conditions holds:
\begin{enumerate}
 \item Let be $(E\left|X_1\right|^{2 + \delta})^{\frac{1}{2 + \delta}} < \infty$ for some $ 0 < \delta \leq \infty$. Assume that $\left(X_n\right)_{n\in\mathds{N}}$ is strongly mixing with $\alpha(k)=O(k^{-\alpha})$ for an $\alpha>\frac{2+\delta}{\delta}$.
\item Let be $E \left| X_1\right|^{4 + \delta} < \infty$ for some $\delta > 0$. Assume that $\left(X_n\right)_{n\in\mathds{N}}$ is near epoch dependent (with approximation constants $\left( a_l\right)_{l\in\N}$) on a stationary absolutely regular process $ \left(Z_n\right)_{n\in\mathds{Z}}$ and $\sum^\infty_{k = 0} k^2 ( a_k^{\frac{\delta}{3 + \delta}} + ( \beta (k))^{\frac{\delta}{4 + \delta}}) < \infty$.
\end{enumerate}
Then $\sigma^2 = EX^2_1 + 2 \sum^\infty_{i =2} \Cov (X_1, X_i) <\infty$ and in the case $\sigma^2 > 0$
\begin{align}
\Var^* (\sqrt{kp} \bar{X}^*_{n,kp}) \rightarrow \sigma^2 \qquad &\mbox { a.s.}\\
\sup_{x\in\R} \left| P^*\left(\sqrt{kp} (\bar{X}_{n,kp}^* - \bar{X}_{n,kp}) \leq x \right) -P \left( \sqrt{n} (\bar{X}_n - \mu) \leq x\right)\right| \rightarrow 0 \qquad &\mbox{ a.s. }
\end{align}
\end{theorem}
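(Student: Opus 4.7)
The plan is to apply a conditional Lindeberg CLT to the i.i.d.\ bootstrap block sums and so reduce the theorem to two almost-sure statements: (a) convergence of the conditional variance to $\sigma^2$ and (b) the conditional Lindeberg condition. Writing $Y_i = X_i - \mu$, $\tilde{S}_i = \sum_{t \in B_i} Y_t$, and $\tilde{S}_j^*$ for the $j$th resampled block sum of the $Y$'s, we have
\[
\sqrt{kp}\bigl(\bar{X}^*_{n,kp} - \bar{X}_{n,kp}\bigr) = \frac{1}{\sqrt{kp}}\sum_{j=1}^k \bigl(\tilde{S}_j^* - \bar{\tilde{S}}\bigr), \qquad \bar{\tilde{S}} = p\bigl(\bar{X}_{n,kp}-\mu\bigr).
\]
Given the data, the $\tilde{S}_j^*$ are i.i.d., so the triangular-array Lindeberg CLT delivers conditional asymptotic normality with variance $\sigma^2$, and P\'olya's theorem upgrades pointwise to uniform convergence of distribution functions. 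Combined with the partial-sum CLT $\sqrt{n}(\bar{X}_n-\mu) \Rightarrow N(0,\sigma^2)$, which is available under either hypothesis, this yields the Kolmogorov statement.

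For (a), decompose
\[
\Var^*\bigl(\sqrt{kp}\bar{X}^*_{n,kp}\bigr) = \frac{1}{kp}\sum_{i=1}^k \tilde{S}_i^2 - p\bigl(\bar{X}_{n,kp}-\mu\bigr)^2.
\]
The subtracted term tends to $0$ a.s.\ by a polynomial rate bound on $\bar{X}_{n,kp}-\mu$ (Rosenthal plus Borel--Cantelli) combined with $p \leq Cn^\epsilon$, $\epsilon<1$. Since $p^{-1}E\tilde{S}_i^2 \to \sigma^2$ by summability of covariances, the task reduces to the SLLN $\frac{1}{k}\sum_i (p^{-1}\tilde{S}_i^2 - p^{-1}E\tilde{S}_i^2) \to 0$ a.s., which I prove by a fourth-moment Chebyshev bound. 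A Rosenthal-type inequality under either hypothesis yields $E|\tilde{S}_i|^{4+\delta} \leq C p^{2+\delta/2}$ (using the $(4+\delta)$-moment in case 2; case 1 uses $(2+\delta)$-moments and Davydov/Ibragimov), so $E(p^{-1}\tilde{S}_i^2)^{2+\delta/2}$ is uniformly bounded. Under hypothesis 1, cross-block covariances $\Cov(\tilde{S}_i^2, \tilde{S}_{i+r}^2)$ are estimated directly by Davydov's inequality. Under hypothesis 2, I first couple each $\tilde{S}_i$ to its conditional expectation on the window $Z_{(i-1)p-l+1}, \ldots, Z_{ip+l}$ of the driving process (approximation error controlled by $a_l$) and then apply the $\beta$-inequality to the resulting disjoint-window functionals; optimising $l = l(r)$ yields $|\Cov(\tilde{S}_i^2,\tilde{S}_{i+r}^2)|/p^2 = O\bigl(a_r^{\delta/(3+\delta)} + \beta(r)^{\delta/(4+\delta)}\bigr)$. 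The summability hypothesis then gives $E\bigl(\sum_i(p^{-1}\tilde{S}_i^2 - p^{-1}E\tilde{S}_i^2)\bigr)^4 = O(k^2)$, and Borel--Cantelli along the geometric subsequence $n_l = 2^l$ produces a.s.\ convergence. The stabilization condition (\ref{line13}) ensures the block length is constant on $(2^l, 2^{l+1}]$ and that $k(n)$ varies there by at most a factor of $2$, extending the convergence to all $n$.

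For (b), bound
\[
\frac{1}{kp}\sum_{j=1}^k E^*\bigl[(\tilde{S}_j^*-\bar{\tilde{S}})^2 \mathds{1}_{\{|\tilde{S}_j^*-\bar{\tilde{S}}|>\epsilon\sqrt{kp}\}}\bigr] \leq \frac{C}{\epsilon^\delta (kp)^{1+\delta/2}} \sum_{i=1}^k |\tilde{S}_i|^{2+\delta}
\]
after absorbing $\bar{\tilde{S}}$ via the rate bound from (a). The right-hand side has expectation $O(k^{-\delta/2})$ by the $(2+\delta)$-moment estimate on block sums, so Markov together with Borel--Cantelli along $n_l = 2^l$ (where $k_l \geq C\,2^{l(1-\epsilon)}$) gives a.s.\ convergence to $0$ along the subsequence, and (\ref{line13}) again lifts this to all $n$.

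The main technical obstacle is the cross-block covariance bound in case 2: because $\tilde{S}_i^2$ is a function of the unobserved $X$'s rather than directly of the driving $Z$'s, no $\beta$-mixing inequality applies directly. Truncation to a $Z$-window of radius $l$ introduces an error governed by $a_l$ and by higher moments of $X_1$, which has to be balanced against the $\beta(r-2l)$ contribution from the gap between disjoint $Z$-windows; this balancing is precisely what forces the moment condition $E|X_1|^{4+\delta}<\infty$ and the summability $\sum k^2\bigl(a_k^{\delta/(3+\delta)} + \beta(k)^{\delta/(4+\delta)}\bigr) < \infty$ in the hypotheses.
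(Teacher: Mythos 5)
Your overall architecture (conditional Lindeberg CLT for the i.i.d.\ block sums, reduction to a.s.\ convergence of the conditional variance plus the conditional Lindeberg condition, P\'olya, dyadic blocking via (\ref{line13})) matches the paper's Propositions \ref{theo1}/\ref{theo2}, and your treatment of the centering term and of the Lindeberg sum is essentially sound (modulo the fact that both the centering term and the variance term need a maximal inequality over $n\in(2^l,2^{l+1}]$, not just convergence at the dyadic endpoints; Serfling's Theorem A or Shao's inequality, as in the paper, supplies this). The genuine gap is in your proof of the SLLN $\frac{1}{k}\sum_i(p^{-1}\tilde S_i^2-p^{-1}E\tilde S_i^2)\to 0$ via a \emph{fourth}-moment Chebyshev bound. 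The quantity $E\bigl(\sum_i(p^{-1}\tilde S_i^2-p^{-1}E\tilde S_i^2)\bigr)^4$ contains the diagonal terms $E(p^{-1}\tilde S_i^2-p^{-1}E\tilde S_i^2)^4\sim p^{-4}E\tilde S_i^8$, which require eighth moments of the block sums, hence essentially $E|X_1|^8<\infty$ --- not available under hypothesis 2 ($E|X_1|^{4+\delta}<\infty$, $\delta>0$ arbitrary) and certainly not under hypothesis 1. Relatedly, your claimed bound $E|\tilde S_i|^{4+\delta}\le Cp^{2+\delta/2}$ in case 1 is false for $\delta<2$, since then $E|X_1|^4$ need not even be finite; under hypothesis 1 Yokoyama's inequality only gives $E|\tilde S_i|^{s}\le Cp^{s/2}$ for $s<2+\delta$.

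The repair is to lower the moment order, which is exactly what the paper does: condition (\ref{line12}) makes $\sum_l(\log k(2^l))^2/k(2^l)<\infty$ with $k(2^l)\gtrsim 2^{l(1-\epsilon)}$, so a \emph{second}-moment bound $E\bigl|\sum_{i\le m}(\tilde S_i^2-E\tilde S_p^2)\bigr|^2\le Cmp^2$ (obtained in case 2 from the fourth-moment covariance inequality of Lemma \ref{lem8}, which is precisely your coupling-to-a-$Z$-window estimate and needs only $E|X_1|^{4+\delta}<\infty$) already suffices after Serfling's maximal inequality and Borel--Cantelli. In case 1 even less is available, and the paper uses an $L^s$ bound with $s$ barely above $1$, namely $\|\tilde S_i^2-ES_p^2\|_s\le C\|S_p\|_{2s}^2\le Cp$ for $2s<2+\delta$, fed into Shao's complete-convergence maximal inequality (Lemma \ref{lem2}); moreover, for the distributional statement in case 1 the paper sidesteps the Lindeberg condition entirely by verifying instead the empirical-distribution condition (\ref{line6}) for the block sums via Rio's maximal inequality for bounded mixing variables (Proposition \ref{theo2}). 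So your covariance analysis of $\tilde S_i^2$ is the right ingredient, but it must be deployed inside a second-order (resp.\ low-order $L^s$) bound rather than a fourth-moment one.
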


\begin{remark}Line (\ref{line13}) is a technical condition on the block lenght that is needed for the chaining techniques we will use. The conditions in the second part of this Theorem are the same as for the Central Limit Theorem in Borovkova et al. \cite{boro}.
\end{remark}

\begin{theorem}\label{theo7}Let $\left(X_n\right)_{n\in\mathds{N}}$ be a stationary sequence of almost surely bounded r.v.'s. Assume that $p(n)\rightarrow\infty$,
\begin{equation*}
p (n) = p(2^l) \mbox { for } 2^l < n \leq 2^{l + 1},\qquad l = 1, 2, \ldots,
\end{equation*}
$p^2/n\rightarrow0$, and one the two following conditions holds
\begin{enumerate}
 \item $\left(X_n\right)_{n\in\mathds{N}}$ is strongly mixing, (\ref{line1}) holds and
\begin{align}\label{line14}
\sum^\infty_{n=1} \frac{p^2 (n) \alpha (p (n))}{n} &< \infty,\\
\label{line15}
\sum^\infty_{n=1} \frac{p^3 (n)}{n^2} &< \infty.
\end{align}
\item $\left(X_n\right)_{n\in\mathds{N}}$ is near epoch dependent (with approximation constants $\left( a_l\right)_{l\in\N}$) on a stationary absolutely regular process $ \left(Z_n\right)_{n\in\mathds{Z}}$ with $\sum^\infty_{k = 0} k^2 ( a_k + \beta (k)) < \infty$.
\end{enumerate}
 Then 
\begin{align} 
\label{line9}\Var^* (\sqrt{kp} \bar{X}^*_{n,kp}) &\rightarrow \sigma^2 \qquad &\mbox { a.s.}\\ 
\label{line11}
 \sqrt{kp}(\bar{X}^*_{n,kp} - \bar{X}_{n,kp}) &\rightarrow N (0, \sigma^2) \text{ in distribution}
 \end{align}
almost surely.
\end{theorem}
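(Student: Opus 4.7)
The plan is to reduce both conclusions to the single task of proving the almost-sure convergence of the bootstrap variance in (\ref{line9}); the conditional CLT in (\ref{line11}) will then follow from an essentially free Lindeberg--Feller argument that is trivialized by the boundedness hypothesis. Write $S_i=\sum_{j\in B_i}X_j$ for the $i$th block sum, assume without loss of generality that $EX_1=0$, and note that since the $k$ resampled blocks $(X^*_{(i-1)p+1},\ldots,X^*_{ip})$ are conditionally iid uniform on $\{I_1,\ldots,I_k\}$, a direct computation yields
\begin{equation*}
\Var^*(\sqrt{kp}\,\bar X^*_{n,kp}) \;=\; \frac{1}{kp}\sum_{i=1}^{k} S_i^2 \;-\; p\,\bar X_{n,kp}^2.
\end{equation*}
The term $p\,\bar X_{n,kp}^2\to 0$ a.s.\ is easy: under either hypothesis $(X_n)$ satisfies a strong law at rate $\bar X_{n,kp}=O(\sqrt{\log n/n})$ (Borel--Cantelli along $n=2^l$ using Rosenthal-type moment bounds for bounded mixing/NED partial sums), and $p=o(n)$.

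I would first dispose of the expectation: $\frac{1}{p}ES_1^2=\Var(X_1)+\frac{2}{p}\sum_{h=1}^{p-1}(p-h)\Cov(X_1,X_{1+h})$ converges to $\sigma^2$ by Ces\`aro, summability of the covariances being guaranteed in both cases by boundedness and the assumed mixing/NED rates. The substantive step is to show $\frac{1}{kp}\sum_{i=1}^{k}(S_i^2-ES_i^2)\to 0$ a.s., which I would prove by Borel--Cantelli along the dyadic subsequence $n_l=2^l$---the purpose of the block-length constraint $p(n)=p(2^l)$ for $2^l<n\leq 2^{l+1}$. The required variance estimate splits as
\begin{equation*}
\Var\Bigl(\sum_{i=1}^{k}S_i^2\Bigr) \;\leq\; k\Var(S_1^2)+2\sum_{1\leq i<j\leq k}\bigl|\Cov(S_i^2,S_j^2)\bigr|.
\end{equation*}
In case~1, bounded-fourth-moment Rosenthal bounds give $\Var(S_1^2)=O(p^2)$ and Davydov's inequality applied to the $O(p^2)$-bounded variables $S_i^2,S_{i+m}^2$ gives $|\Cov(S_i^2,S_{i+m}^2)|\leq Cp^4\alpha((m{-}1)p)$. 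Dividing by $k^2p^2$ and using $kp\asymp n$, one obtains a bound of order $p/n+p^3\alpha(p)/n$ (absorbing tails $\sum_{m\geq 1}\alpha(mp)$ into the same quantity by monotonicity); the hypotheses (\ref{line14}) and (\ref{line15}) are exactly tailored so that the corresponding series along $n=2^l$ is summable. In case~2, Davydov's inequality is replaced by the covariance inequality of Borovkova, Burton, and Dehling \cite{boro} for functions of NED sequences: $S_i^2$ inherits NED approximation constants from $(a_l)$ (with an $O(p)$ multiplicative loss coming from its sup-norm), yielding an analogous bound in which $\alpha(\cdot)$ is replaced by a combination of $\beta(\cdot)$ and $a_\cdot$; summability then follows from $\sum_k k^2(a_k+\beta(k))<\infty$.

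With the a.s.\ variance convergence in hand, (\ref{line11}) reduces to the Lindeberg--Feller theorem applied conditionally to the $k$ iid summands $B_i^*-p\bar X_{n,kp}$, where $B_i^*=\sum_{j\in\text{block }i}X_j^*$. Uniform boundedness of $(X_n)$ by some $M$ yields $|B_i^*-p\bar X_{n,kp}|\leq 2pM$ deterministically, hence for any $\epsilon>0$
\begin{equation*}
\frac{1}{kp}\sum_{j=1}^{k}(S_j-p\bar X_{n,kp})^2\mathds{1}\{|S_j-p\bar X_{n,kp}|>\epsilon\sqrt{kp}\}=0
\end{equation*}
as soon as $2pM<\epsilon\sqrt{kp}$, which is precisely $p^2/n\to 0$. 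Convergence is then extended from the dyadic subsequence $n=2^l$ to all $n$ by exploiting the step-function structure of $p(n)$ on $(2^l,2^{l+1}]$ and the monotonicity of $k(n)=\lfloor n/p(n)\rfloor$ on that interval.

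The main obstacle is the variance estimate for $\sum_i S_i^2$ in the NED-on-AR case: unlike under strong mixing, $S_i^2$ is not a measurable function of a finite window of a mixing process, and one must approximate it on a window of the underlying absolutely regular sequence and combine the NED approximation constants with the $\beta$-mixing bound, while carefully tracking the $p$-dependent sup-norm $O(p^2)$ that multiplies the approximation error. The covariance inequalities of Borovkova et al.\ \cite{boro} handle exactly this chaining, and once they are invoked the remainder of the proof is routine bookkeeping built around the Borel--Cantelli lemma along $n=2^l$.
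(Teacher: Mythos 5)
Your architecture is the same as the paper's: Theorem \ref{theo7} is obtained from Proposition \ref{theo3} (and the Peligrad-type proposition preceding it), so everything reduces to verifying (\ref{line1}), (\ref{line3}) and (\ref{line4}), the conditional Lindeberg condition being vacuous for bounded variables once $p^2/n\to 0$ --- exactly your observation that $|B_i^*-p\bar X_{n,kp}|\le 2pM<\epsilon\sqrt{kp}$ eventually. The decomposition $\Var^*(\sqrt{kp}\,\bar X^*_{n,kp})=\frac{1}{kp}\sum_i S_i^2-p\bar X_{n,kp}^2$ and the treatment of the centering and expectation terms also match. The one step that would fail as written is the passage from the dyadic points $n=2^l$ to all $n$: within a block $2^l<n\le 2^{l+1}$ the number of summands $k(n)$ roughly doubles, so the increment $\sum_{i=k(2^l)+1}^{k(n)}(S_{p,i}^2-ES_p^2)$ is of the same order as the whole sum and is not controlled by a Chebyshev bound at the endpoints; nor does monotonicity of $k(n)$ give a useful sandwich, since $\frac{1}{k(n)p}\sum_{i\le k(n)}S_i^2$ is only trapped between quantities whose limits differ by the factor $k(2^{l+1})/k(2^l)\to 2$. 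What is actually needed is a bound on $\max_{m\le k(2^{l+1})}\bigl|\sum_{i=1}^m(S_{p,i}^2-ES_p^2)\bigr|$, and this is precisely why hypotheses (\ref{line14})--(\ref{line15}) have the form they do: they are calibrated to the Rio--Peligrad maximal inequality (Lemma \ref{lem4}) applied to the bounded strongly mixing array $\xi_i=S_{p,i}^2-ES_p^2$, with the crude bounds $E\xi_i^2\le Cp^4$ and $\sum_{i\le k}\alpha((i-1)p+1)\le C+k\alpha(p)$, giving a per-block contribution of order $p^3(2^l)2^{-l}+p^2(2^l)\alpha(p(2^l))$ whose summability over $l$ is exactly (\ref{line15}) plus (\ref{line14}). (Your sharper claim $\Var(S_1^2)=O(p^2)$ would require $\sum_n n\alpha(n)<\infty$, which is not assumed; it is also unnecessary.) In the NED case the same upgrade from a second-moment bound to a maximal bound is done via Serfling's inequality, as in the proof of Theorem \ref{theo6}, and there your use of the Borovkova--Burton--Dehling covariance inequality for the fourth mixed moments is the paper's Lemma \ref{lem8}. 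Once you replace plain Chebyshev at the dyadic endpoints by these maximal inequalities, your argument closes and coincides with the intended proof.
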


\begin{remark}
The condition (\ref{line14}) implies $\sum^\infty_{n=1} \frac{\alpha (n)}{n} < \infty$. We can reformulate the first part of Theorem \ref{theo7} under above condition on mixing coefficients instead of conditions (\ref{line14}) and (\ref{line15}) claiming that there is a sequence $(p\left(n\right))$ that the statement of Proposition \ref{theo1} holds (although the Central Limit Theorem has not to hold), as it was done in Peligrad \cite{peli}. Under the conditions of the second part of Theorem \ref{theo7}, the Central Limit Theorem is true for $\bar{X}$, so that the statement of Proposition \ref{theo1} is also valid. 
\end{remark}

\subsection{Bootstrap for U-Statistics}

To bootstrap a $U$-statistic under dependence, one can apply the nonoverlapping block bootstrap and plug the observations $X_1^\star,\ldots,X_n^\star$ in:
\begin{multline*}
 U^{\star}_{n}\left(h\right)=\frac{2}{pk\left(pk-1\right)}\sum_{1\leq i<j\leq pk}h\left(X^{\star}_{i},X^{\star}_{j}\right)\\
=\theta+\frac{2}{pk}\sum_{i=1}^{pk}h_{1}\left(X_{i}^{\star}\right)+\frac{2}{pk\left(pk-1\right)}\sum_{1\leq i<j\leq pk}h_{2}\left(X^{\star}_{i},X^{\star}_{j}\right).
\end{multline*}

\begin{theorem}\label{theo8} Let $\left(X_n\right)_{n\in\mathds{N}}$ be a stationary process and $h$ a $P$-Lipschitz-continuous kernel with uniform $(2+\delta)$-moments for a $\delta>0$. Assume that $1/p(n)+p(n)/n\rightarrow0$ and one of the following two conditions holds:
\begin{enumerate}
\item $\left(X_n\right)_{n\in\mathds{N}}$ is strongly mixing, $E\left|X_1\right|^\gamma<\infty$ for a $\gamma>0$ and $\alpha\left(n\right)=O\left(n^{-\alpha}\right)$ for a $\alpha>\frac{3\gamma\delta+\delta+5\gamma+2}{2\gamma\delta}$.
\item $\left(X_n\right)_{n\in\mathds{N}}$ is near epoch dependent on an absolutely regular process with $\beta\left(n\right)=O\left(n^{-\beta}\right)$ for a $\beta>\frac{2+\delta}{\delta}$ and $a\left(n\right)=O\left(n^{-a}\right)$ for an $a>\frac{4+3\delta}{\delta}$. 
\end{enumerate}
then as $n\rightarrow\infty$
\begin{equation}
\Var^\star\left[\sqrt{pk}U_n^\star\left(h\right)\right]-\Var\left[\sqrt{n}U_n\left(h\right)\right]\rightarrow0,
\end{equation}
\begin{equation} \sup_{x\in\mathds{R}}\left|P^{\star}\left[\sqrt{pk}\left(U^{\star}_{n}\left(h\right)-E^{\star}\left[U^{\star}_{n}\left(h\right)\right]\right)\leq x\right]-P\left[\sqrt{n}\left(U_{n}\left(h\right)-\theta\right)\leq x\right]\right|\rightarrow0.
\end{equation}
in probability.
\end{theorem}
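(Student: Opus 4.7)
The plan is to exploit the Hoeffding decomposition $h(x,y)=\theta+h_1(x)+h_1(y)+h_2(x,y)$ in order to reduce the linear part to the sample-mean bootstrap already covered by Theorem \ref{theo5}, while showing that the degenerate remainder $U_n^\star(h_2)$ is asymptotically negligible. On the original side,
\[\sqrt{n}(U_n(h)-\theta)=\frac{2}{\sqrt{n}}\sum_{i=1}^n h_1(X_i)+\sqrt{n}\,U_n(h_2),\]
and on the bootstrap side,
\[\sqrt{pk}\bigl(U_n^\star(h)-E^\star U_n^\star(h)\bigr)=\frac{2}{\sqrt{pk}}\sum_{i=1}^{pk}\bigl(h_1(X_i^\star)-E^\star h_1(X_i^\star)\bigr)+\sqrt{pk}\bigl(U_n^\star(h_2)-E^\star U_n^\star(h_2)\bigr),\]
since the bias of the degenerate part cancels in the centering. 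The common limit law should be $N(0,4\sigma_1^2)$ with $\sigma_1^2=\Var(h_1(X_1))+2\sum_{i\ge 2}\Cov(h_1(X_1),h_1(X_i))$, identified from the linear parts on both sides.

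\textbf{Linear part via Theorem \ref{theo5}.} I would apply Theorem \ref{theo5} to the transformed process $Y_i=h_1(X_i)$. Jensen's inequality together with the uniform $(2+\delta)$-moments of $h$ yields $E|h_1(X_1)|^{2+\delta}\le M$. Measurability of $h_1$ ensures that $(Y_i)$ inherits strong mixing from $(X_i)$ under condition (1), with the same polynomial rate, which is summable at the exponent $\delta/(2+\delta)$ required by Theorem \ref{theo5}. Under condition (2), the $P$-Lipschitz bound on $h$, integrated in the second argument, gives a Lipschitz-type inequality for $h_1$; combining this with the NED property of $(X_i)$ through a standard truncation argument (splitting according to whether the NED approximation error exceeds a threshold $\epsilon$ and then optimizing $\epsilon$ against the uniform moments) shows that $(h_1(X_i))$ is NED on the same absolutely regular process with polynomial approximation constants still compatible with the summability hypothesis of Theorem \ref{theo5}. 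This yields both the CLT for $\frac{2}{\sqrt{n}}\sum h_1(X_i)$ and the matching bootstrap statement for the first summand on the right.

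\textbf{Degenerate part, main obstacle, and conclusion.} The crucial remaining estimate is
\[E\,E^\star\bigl[pk\bigl(U_n^\star(h_2)-E^\star U_n^\star(h_2)\bigr)^2\bigr]\longrightarrow 0,\]
together with $\sqrt{n}\,U_n(h_2)\to 0$ in probability; the latter follows from the degenerate $U$-statistic moment bounds of Dehling and Wendler under the prescribed mixing/NED rates and the $P$-Lipschitz kernel. For the bootstrap version I would split $\sum_{1\le i<j\le pk}h_2(X_i^\star,X_j^\star)$ into within-block contributions (an i.i.d.\ sum in the bootstrap block indices $J_1,\ldots,J_k$ of the original within-block $h_2$-totals) and between-block contributions (conditionally on the data, a two-sample sum in the i.i.d.\ $J_s$ whose kernel is degenerate in expectation). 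Independence of the $J_s$ makes the $P^\star$-variance explicit, while the $P$-Lipschitz property and polynomial mixing/NED rates let one bound the expected within- and between-block variance contributions by the same techniques used for the original $U_n(h_2)$. Both asserted statements then follow by Slutsky-type arguments under $P^\star$ and $P$, and the variance claim itself reduces to Theorem \ref{theo5} applied to $(Y_i)$ once the degenerate contribution is controlled. This last step is the main obstacle: it requires combining the degeneracy of $h_2$ with the hybrid dependence structure of the bootstrap sample (dependence inside an original block, independence across block choices) while preserving the $1/\sqrt{pk}$ gain, and the between-block covariance bookkeeping, where one must apply Dehling--Wendler type estimates with the $P$-Lipschitz and mixing/NED inputs uniformly in the random block choices, is the delicate piece.
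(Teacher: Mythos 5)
Your proposal follows essentially the same route as the paper: Hoeffding decomposition, reduction of the linear part to Theorem \ref{theo5} after verifying that $(h_1(X_n))_{n\in\N}$ inherits strong mixing respectively near epoch dependence (the paper's Lemma \ref{lem14}), and annihilation of the degenerate part by a second-moment/Chebyshev argument. The ``delicate piece'' you flag --- bounding $EE^\star\bigl[\bigl(\sqrt{pk}\,U_n^\star(h_2)\bigr)^2\bigr]$ uniformly over the random block allocations --- is precisely the content of the paper's Lemma \ref{lem13}, which reduces the bootstrapped fourfold sum to the original sum $\sum_{i_1,i_2,i_3,i_4}\left|E\left[h_2(X_{i_1},X_{i_2})h_2(X_{i_3},X_{i_4})\right]\right|$ controlled by Lemma \ref{lem12}.
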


\begin{theorem}\label{theo9} Let $\left(X_n\right)_{n\in\mathds{N}}$ be a stationary process and $h$ a $P$-Lipschitz-continuous kernel with uniform $(2+\delta)$-moments for a $\delta>0$ and $E\left|h_1\left(X_1\right)\right|^{4+\delta}$. Assume that (\ref{line12}), (\ref{line13}), and one of the following two conditions holds:
\begin{enumerate}
\item $\left(X_n\right)_{n\in\mathds{N}}$ is strongly mixing, $E\left|X_1\right|^\gamma<\infty$ for a $\gamma>0$ and $\alpha\left(n\right)=O\left(n^{-\alpha}\right)$ for a $\alpha>\frac{3\gamma\delta+\delta+5\gamma+2}{2\gamma\delta}$.
\item $\left(X_n\right)_{n\in\mathds{N}}$ is near epoch dependent on an absolutely regular process with $\beta\left(n\right)=O\left(n^{-\beta}\right)$ for a $\beta>\frac{12+3\delta}{\delta}$ and $a\left(n\right)=O\left(n^{-a}\right)$ for an $a>\frac{9+3\delta}{\delta}$. 
\end{enumerate}
then a.s. as $n\rightarrow\infty$
\begin{equation}
 \label{line16}
\Var^\star\left[\sqrt{pk}U_n^\star\left(h\right)\right]-\Var\left[\sqrt{n}U_n\left(h\right)\right]\rightarrow0,
\end{equation}
\begin{equation} \sup_{x\in\mathds{R}}\left|P^{\star}\left[\sqrt{pk}\left(U^{\star}_{n}\left(h\right)-E^{\star}\left[U^{\star}_{n}\left(h\right)\right]\right)\leq x\right]-P\left[\sqrt{n}\left(U_{n}\left(h\right)-\theta\right)\leq x\right]\right|\rightarrow0.\label{line17}
\end{equation}
\end{theorem}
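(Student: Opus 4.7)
The plan is to reduce Theorem \ref{theo9} to Theorem \ref{theo6} via Hoeffding's decomposition. Setting $Y_i := h_1(X_i)$, the algebraic identities
\begin{align*}
\sqrt{n}(U_n(h)-\theta) &= \frac{2}{\sqrt{n}}\sum_{i=1}^n Y_i + \sqrt{n}\,U_n(h_2),\\
\sqrt{pk}\bigl(U_n^\star(h) - E^\star U_n^\star(h)\bigr) &= 2\sqrt{pk}\bigl(\bar Y^\star_{n,kp} - \bar Y_{n,kp}\bigr) + \sqrt{pk}\bigl(U_n^\star(h_2) - E^\star U_n^\star(h_2)\bigr)
\end{align*}
reduce the problem to showing (a) that both quadratic remainders vanish, the first almost surely and the second conditionally in probability, almost surely in the data, and (b) that Theorem \ref{theo6} applied to $(Y_n)$ yields the almost-sure conditional CLT for $2\sqrt{pk}(\bar Y^\star_{n,kp}-\bar Y_{n,kp})$. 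Assertion (\ref{line17}) then follows from Slutsky's theorem applied conditionally on the data, and assertion (\ref{line16}) from the same decomposition applied to second moments together with Cauchy--Schwarz and the uniform moments of $h_2$.

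For step (b), the $P$-Lipschitz continuity of $h$ together with a standard truncation argument on the conditional discrepancy $X_1 - E(X_1\mid \mathcal{F}_{-l}^l)$ shows that $(Y_n)$ is $L^1$-near epoch dependent on $(Z_n)$ with approximation constants $\tilde a_l = O(\sqrt{a_l})$. Combined with the hypothesis $E|Y_1|^{4+\delta}<\infty$, the polynomial rates on $a_l$ and $\beta(k)$ assumed in Theorem \ref{theo9} are calibrated precisely so that $\sum_k k^2\bigl(\tilde a_k^{\delta/(3+\delta)} + \beta(k)^{\delta/(4+\delta)}\bigr) < \infty$, which is exactly the hypothesis of the second part of Theorem \ref{theo6} applied to $(Y_n)$. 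Its conclusion provides both the almost-sure convergence $\Var^\star\bigl(2\sqrt{pk}(\bar Y^\star_{n,kp}-\bar Y_{n,kp})\bigr) \to 4\sigma_{h_1}^2$ and the almost-sure conditional weak convergence of $2\sqrt{pk}(\bar Y^\star_{n,kp}-\bar Y_{n,kp})$ to $N(0,4\sigma_{h_1}^2)$.

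For step (a), the decay of the original degenerate part $\sqrt{n}\,U_n(h_2)$ follows from a Dehling--Wendler-type moment inequality of the form $E\,U_n(h_2)^2 = O(n^{-1-\eta})$ for some $\eta>0$, valid under the mixing/NED rates imposed here (using $P$-Lipschitz continuity and uniform $(2+\delta)$-moments), combined with Borel--Cantelli along the dyadic subsequence $n = 2^l$ and the interpolation condition (\ref{line13}). For the bootstrap degenerate part, one conditions on the data and splits it into a within-block and a between-block contribution from the $k$ independently drawn blocks: the within-block part has conditional mean controlled by the uniform $(2+\delta)$-moments and is of order $p/n$, whereas the between-block part is a sum over $O(k^2)$ ordered pairs of independently chosen blocks whose conditional variance has to be shown to vanish \emph{almost surely}, not merely in probability. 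One bounds this conditional variance by a sum of empirical covariances of $h_2$-values between blocks, uses $P$-Lipschitz continuity together with the NED/mixing hypotheses to control each covariance on average, and then promotes the in-probability bound to an almost-sure bound via the dyadic chaining argument enabled by (\ref{line13}) and Borel--Cantelli, mirroring the strategy used in the proof of Theorem \ref{theo6}. This last step, the almost-sure control of the bootstrap degenerate part under the stated rates, is the principal obstacle; everything else amounts to bookkeeping with standard moment inequalities.
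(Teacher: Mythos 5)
Your overall architecture (Hoeffding decomposition, Theorem \ref{theo6} applied to $(h_1(X_n))_{n\in\N}$ for the linear part, separate treatment of the two degenerate remainders, Slutsky) coincides with the paper's, and your handling of the linear part and of $\sqrt{n}U_n(h_2)$ is essentially what the paper does (it cites Theorem 2.1 of Dehling--Wendler for the latter, and Proposition 2.11 / Lemma 2.15 of Borovkova et al.\ for the near epoch dependence of $h_1(X_n)$ --- note the constants there are $\tilde a_l\leq C(\sqrt{a_l}+a_l^{(3+\delta)/(8+\delta)})$, not $O(\sqrt{a_l})$ as you assert, so your claim that the rates are ``calibrated precisely'' needs an actual computation). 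The genuine gap is exactly the step you yourself flag as ``the principal obstacle'': you describe a strategy for the almost-sure control of the bootstrap degenerate part (within-block/between-block split, empirical covariances, ``control on average'', promotion to a.s.) but you do not supply the two ingredients that make it work. First, one needs a combinatorial lemma (Lemma \ref{lem13} in the paper) showing that $EE^\star\bigl[(\sum_{i<j}h_2(X_i^\star,X_j^\star))^2\bigr]$ is dominated by $C\sum_{i_1,i_2,i_3,i_4=1}^n\left|E\left[h_2(X_{i_1},X_{i_2})h_2(X_{i_3},X_{i_4})\right]\right|$; this requires a case analysis over how the four bootstrap indices are allocated to blocks, since two bootstrap indices in the same block are deterministically linked given the data while indices in distinct blocks are conditionally i.i.d.\ uniform over blocks. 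Combined with Lemma \ref{lem12}, which gives the bound $O(n^{3-\eta})$ for that quadruple sum, this yields the summable rate $O(n^{-\eta})$ after normalization.

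Second, the paper does not ``promote an in-probability bound on the conditional variance to an almost-sure one''; it sidesteps that formulation entirely by proving $\sqrt{pk}\,U_n^\star(h_2)\rightarrow0$ almost surely under the \emph{joint} law (Chebyshev with respect to $EE^\star$, dyadic subsequences via (\ref{line13}), a chaining bound for $\max_{2^{l-1}\leq n<2^l}\left|b_nQ_n^\star-b_{2^{l-1}}Q_{2^{l-1}}^\star\right|$, and Borel--Cantelli), and then applies Fubini to convert this into the statement that $P^\star[\sqrt{pk}\,U_n^\star(h_2)\rightarrow0]=1$ for almost every realization of the data; the same device handles $\Var^\star[\sqrt{pk}\,U_n^\star(h_2)]\rightarrow0$ a.s.\ needed for (\ref{line16}). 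Without the block-allocation lemma and without either the Fubini argument or an explicit a.s.\ bound on the conditional second moment, your step (a) remains a plan rather than a proof, and it is precisely the step on which the theorem turns.
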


\section{Preliminary results}

\subsection{Central Limit Theorem, Moment and Maximum Inequalities for Partial Sums}

In this subsection we will give some known results which will be used in the next section in the proofs of the theorems. We set
\begin{equation*}
 S_n=\sum_{i=1}^{n}X_i
\end{equation*}
and assume w.l.o.g. that $EX_i=0$.

\begin{lemma}[Ibragimov \cite{ibra}]\label{lem1} Assume that one of the following conditions is satisfied:
\begin{itemize}
\item Let $\left(X_n\right)_{n\in\N}$ be a stationary sequence of strongly mixing r.v.'s with $EX_1 = \mu$ and $ (E\left| X_1\right|^{2 + \delta})^\frac{1}{2 + \delta} < \infty$ for some $ 0 < \delta \leq \infty.$ Assume that
\begin{equation*}
\sum^\infty_{k=1} \alpha^{\frac{\delta}{2 + \delta}} (k) < \infty.
\end{equation*}
\item Let $\left(X_n\right)_{n\in\N}$ be a stationary sequence that is near epoch dependent on a strongly mixing such that
\begin{align*}
\sum^\infty_{k=1} \alpha^{\frac{\delta}{2 + \delta}} (k) &< \infty\\
\sum_{k=1}^\infty \left(E\left|X_0-E[X_0|Z_{-l},\ldots,Z_{l}]\right|^{\frac{2+\delta}{1+\delta}}\right)^{\frac{1+\delta}{2+\delta}}&< \infty
\end{align*}
 and $ (E\left| X_1\right|^{2 + \delta})^\frac{1}{2 + \delta} < \infty$ for some $ 0 < \delta \leq \infty.$
\end{itemize}
Then $\sigma^2 = \Var X_1 + 2 \sum_{k=2}^\infty \Cov (X, X)<\infty$ and $ \frac{\Var S_n}{n} \rightarrow \sigma^2$. If in addition $\sigma^2 > 0$, then
\begin{equation*}
 n^{1/2} (\bar{X}_n - \mu) \rightarrow N (0, \sigma^2)\quad \text{ in distribution}.
\end{equation*}

\end{lemma}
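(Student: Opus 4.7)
The plan is to reduce the near-epoch-dependent case to the strongly mixing case via a truncation-and-approximation argument, so I would treat the mixing case first and then leverage it for the general claim.

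For the strongly mixing case, I would employ Bernstein's classical big-block/small-block decomposition. As a first step, Davydov's covariance inequality gives
\begin{equation*}
|\Cov(X_1,X_k)| \leq C\, \alpha^{\delta/(2+\delta)}(k)\,(E|X_1|^{2+\delta})^{2/(2+\delta)},
\end{equation*}
so the hypothesis $\sum_k \alpha^{\delta/(2+\delta)}(k)<\infty$ immediately yields $\sigma^2<\infty$ and $n^{-1}\Var S_n\to\sigma^2$. Next I would partition $\{1,\ldots,n\}$ into alternating big blocks of length $p=p(n)$ and small blocks of length $q=q(n)$, chosen so that $q/p\to 0$, $p/n\to 0$, and $(n/p)\alpha(q)\to 0$. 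The small-block contribution has variance of order $nq/p = o(n)$ by the covariance bound, hence is asymptotically negligible. For the big blocks, either Bradley's coupling or the characteristic function blocking argument shows that the joint distribution of the big-block partial sums can be coupled with that of independent copies up to a total-variation error $O((n/p)\alpha(q))\to 0$. This reduces the problem to an array of independent centred summands with a uniform $(2+\delta)$-moment, at which point the Lindeberg--Feller CLT finishes the mixing case.

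For the near-epoch-dependent case, fix $l=l(n)\to\infty$ slowly and set $\tilde X_n^{(l)} := E[X_n\mid Z_{n-l},\ldots,Z_{n+l}]$. Each $\tilde X_n^{(l)}$ is a measurable function of a $(2l+1)$-block of the mixing process $(Z_n)$, so $(\tilde X_n^{(l)})_{n\in\N}$ is itself strongly mixing with $\tilde\alpha(k)\leq \alpha_Z(\max(k-2l,0))$, and by Jensen it inherits the $(2+\delta)$-moment bound. The mixing CLT already established therefore applies to $(\tilde X_n^{(l)})$ and gives $n^{-1/2}\sum_{i=1}^n(\tilde X_i^{(l)}-E\tilde X_i^{(l)})\Rightarrow N(0,\tilde\sigma_l^2)$. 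It then remains to control the remainder $R_n^{(l)} := n^{-1/2}\sum_{i=1}^n(X_i-\tilde X_i^{(l)})$. Applying a generalized covariance inequality that combines the mixing rate of $(Z_n)$ with an $L^{(2+\delta)/(1+\delta)}$-bound on the approximation error $X_n-\tilde X_n^{(l)}$, I would bound $\Var R_n^{(l)}$ by a constant multiple of the tail sum of the series in the second hypothesis, which tends to $0$ as $l\to\infty$. A standard diagonal choice $l=l(n)\to\infty$, together with the consistency check $\tilde\sigma_l^2\to\sigma^2$ (again by Davydov plus dominated convergence), merges the two pieces into the desired CLT for $(X_n)$.

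The main obstacle is precisely the variance estimate for $R_n^{(l)}$. A naive use of $L^2$ near-epoch dependence only yields covariance bounds involving the $L^2$ approximation constants; the sharper hypothesis stated in $L^{(2+\delta)/(1+\delta)}$ is needed so that Hölder's inequality balances the mixing coefficient against the moment of the approximation error in the right way. Getting this trade-off right, and then verifying that the variance $\tilde\sigma_l^2$ of the approximating sequence converges to $\sigma^2$ as $l\to\infty$ uniformly well enough to carry out the diagonal argument, is where the bulk of the technical work sits; the rest of the argument is assembled from standard blocking, coupling, and Lindeberg-type estimates.
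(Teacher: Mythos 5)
The paper does not actually prove this lemma: it is quoted verbatim as a known result of Ibragimov (and the analogous functional/NED version from the Ibragimov--Billingsley tradition), so there is no in-paper argument to compare against. Your outline is essentially the classical proof of that cited result: Davydov's inequality for $\sigma^2<\infty$ and $n^{-1}\Var S_n\to\sigma^2$, Bernstein big-block/small-block coupling plus Lindeberg--Feller for the mixing case, and reduction of the NED case to finite-window conditional expectations $\tilde X_n^{(l)}=E[X_n\mid Z_{n-l},\ldots,Z_{n+l}]$ (which are mixing with shifted coefficients) together with a variance bound on the remainder. You also correctly identify why the approximation hypothesis is phrased in $L^{(2+\delta)/(1+\delta)}$: it is the H\"older conjugate of $L^{2+\delta}$, which is exactly what makes the covariance of the increments $X^{(j+1)}-X^{(j)}$ controllable by interpolating between the mixing bound and the moment bound.

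Two steps are asserted rather than justified and deserve a word of caution. First, in the mixing case you pass to ``independent centred summands with a uniform $(2+\delta)$-moment'': the normalized big-block sums $S_p/\sqrt{p}$ are not automatically uniformly $(2+\delta)$-integrable under the marginal moment assumption alone; one needs either a truncation argument (as in Ibragimov's original proof) or a Rosenthal/Yokoyama-type moment inequality for partial sums (the paper's Lemma~3.4), whose hypothesis is close to but not literally identical with $\sum_k\alpha^{\delta/(2+\delta)}(k)<\infty$ for exponents strictly above $2$. Second, the bound on $\Var R_n^{(l)}$ is not a one-line consequence of a single covariance inequality applied to $X_i-\tilde X_i^{(l)}$; the standard route is to telescope $X_i-\tilde X_i^{(l)}=\sum_{j\geq l}(\tilde X_i^{(j+1)}-\tilde X_i^{(j)})$, bound each increment's partial-sum norm separately, and then use Minkowski, which is why the hypothesis is a summability condition over the window size rather than a single approximation constant. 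Neither point is a wrong turn, but both are where the actual work of the cited proof lives.
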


\begin{lemma}[Shao \cite{sha2}]\label{lem2} Let $\left(\xi_n\right)_{n\in\N}$ be a strongly mixing sequence of r. v.'s with $E \xi_i = 0$ and $(E \left|\xi_i\right|^s)^{1/s} \leq \mathcal{D}_n$ for $ 1 \leq i \leq n$ and for some $1 < s \leq \infty.$ Assume that
\[ \alpha (i) \leq C_0 i ^{- \theta} \mbox { for some } {C_0 > 1} \mbox { and } \theta > 0.
\] 
Then there exists a constant $K=K\left(C_0, \theta, s\right)$, such that for any $x \geq K {\mathcal D}_n n^{1/2} \log n$
\begin{equation*}
P (\max_{i \leq n} \left| \sum^i_{j=1} \xi_j\right| \geq x)\leq K n ( \frac{\mathcal {D}_n} {x})^\frac{s(\theta + 1)} {s + \theta} (\log \frac{x}{{\mathcal D}_n})^\theta.
\end{equation*}
\end{lemma}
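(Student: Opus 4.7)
The plan is to combine four ingredients standard for moment and maximal inequalities under strong mixing: (i) truncation at a level $y$ chosen as an appropriate power of $x/\mathcal{D}_n$, (ii) a Bernstein-style big-block/small-block decomposition, (iii) Bradley's coupling lemma to replace the big blocks by independent copies, and (iv) a M\'oricz-type inequality to pass from a partial-sum bound to a maximum bound. The polynomial rate $(\mathcal{D}_n/x)^{s(\theta+1)/(s+\theta)}$ is an interpolation exponent between the moment index $s$ and the mixing rate $\theta$, and it will emerge only after the free parameters are balanced.

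First I would truncate: write $\xi_i = \xi_i^{(1)} + \xi_i^{(2)}$ with $\xi_i^{(1)} := \xi_i \mathds{1}_{\{|\xi_i|\leq y\}} - E[\xi_i \mathds{1}_{\{|\xi_i|\leq y\}}]$. Markov's inequality gives $P(|\xi_i|>y)\leq(\mathcal{D}_n/y)^s$, and a union bound over $i\leq n$ yields
\[
P\Bigl(\max_{i\leq n}\Bigl|\sum_{j\leq i}\xi_j^{(2)}\Bigr|\geq x/4\Bigr) \leq n(\mathcal{D}_n/y)^s + \text{recentering},
\]
where the recentering contribution is $\leq n\,E[|\xi_1|\mathds{1}_{\{|\xi_1|>y\}}]\leq n\mathcal{D}_n^s y^{1-s}$, of the same order. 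This yields the polynomial part of the target bound, modulo the choice of $y$.

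Second, for the truncated bounded sequence partition $\{1,\ldots,n\}$ into $k \approx n/(p+q)$ alternating big blocks of length $p$ and small blocks of length $q$, write $\sum_j \xi_j^{(1)} = \sum_{r=1}^k Y_r + \sum_{r=1}^k Y_r'$, and control the small-block contribution by Davydov's covariance inequality: $\Var(\sum_r Y_r')\leq Ckq\mathcal{D}_n^2$. Bradley's coupling lemma then produces independent variables $\widetilde Y_r$ with the same marginals as $Y_r$ and total coupling error bounded by $Ck\alpha(q)\leq Ckq^{-\theta}$; on the coupling event, Bernstein's exponential inequality for bounded independent sums yields a tail of the form $\exp(-cx^2/(kp\mathcal{D}_n^2+pyx))$, which is negligible compared with the target polynomial rate provided $y$ is not too small.

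Third, a M\'oricz-type maximal inequality promotes the partial-sum estimate to a bound on $\max_{i\leq n}|\sum_{j\leq i}\xi_j^{(1)}|$ at the cost of a logarithmic factor, which accounts for the $(\log(x/\mathcal{D}_n))^\theta$ term in the conclusion. Finally, the parameters $p,q,y$ are optimized by choosing $y \sim \mathcal{D}_n(x/\mathcal{D}_n)^{(\theta+1)/(s+\theta)}$ and $q$ so that $k q^{-\theta}$ matches $n(\mathcal{D}_n/y)^s$; this balance produces precisely the exponent $s(\theta+1)/(s+\theta)$. The main obstacle is exactly this balancing: four distinct error terms (truncation tail, small-block variance, coupling error, Bernstein body) must simultaneously be dominated by the target rate, and this is where the standing hypothesis $x\geq K\mathcal{D}_n n^{1/2}\log n$ earns its keep, ensuring that the Bernstein exponential is indeed negligible and that the M\'oricz logarithmic losses can be absorbed into the $(\log(x/\mathcal{D}_n))^\theta$ factor.
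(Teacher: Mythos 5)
The paper does not prove this lemma at all: it is quoted verbatim from Shao (1993), so there is no in-paper argument to compare against. Judged on its own terms, your sketch reproduces the standard architecture for such results (truncation, Bernstein blocking, decoupling of big blocks, a maximal inequality, and a final balancing of parameters), and that is indeed the family of techniques behind Shao's proof. However, as written the sketch has concrete gaps that would have to be repaired before it becomes a proof.

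First, the hypothesis allows $1<s\le\infty$, in particular $1<s<2$, in which case $\xi_i$ need not have a finite variance and $\mathcal{D}_n$ controls only the $s$-th moment. Your step ``Davydov's covariance inequality: $\Var\bigl(\sum_r Y_r'\bigr)\le Ckq\mathcal{D}_n^2$'' is then not available; after truncation at level $y$ the correct single-variable variance bound is $E\xi_i^{(1)2}\le y^{2-s}\mathcal{D}_n^s$, and the covariance of two truncated terms is controlled by $Cy^2\alpha(|i-j|)$, whose sum over a block need not be $O(q)$ when $\theta\le 1$ (no summability of $\alpha$ is assumed). Both corrections feed into the Bernstein exponent and into the balancing, so the claim that the exponential term is ``negligible'' cannot be taken for granted; it is exactly here that the extra freedom in choosing $q$ slightly smaller (at the cost of the $(\log(x/\mathcal{D}_n))^\theta$ factor, which comes from the term $k\alpha(q)$ and not, as you suggest, from the M\'oricz step) is needed. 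Second, the coupling you invoke --- independent copies $\widetilde Y_r$ with the same marginals and total error $\sum_r\alpha(q)$ --- is Berbee's lemma for absolutely regular ($\beta$-mixing) sequences; under strong mixing alone, Bradley's coupling carries an additional loss depending on the moments of $Y_r$, and the clean way to get an error of order $k\alpha(q)$ is instead the Volkonskii--Rozanov inequality applied to the exponential moments of the big-block sums. Neither issue is fatal, but both change the bookkeeping, and since the entire content of the lemma is the precise exponent $s(\theta+1)/(s+\theta)$ and the power of the logarithm, the balancing must actually be carried out rather than asserted.
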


\begin{lemma}[Shao and Yu \cite{shao}]\label{lem2b} Let $\left(\xi_n\right)_{n\in\N}$ be a sequence of r. v.'s with $E \xi_i = 0$. Assume that there is a constant $C>0$ such that for any $n\geq 1$
\begin{equation*}
\sup_{k\in\N}E\left(\sum_{i=k+1}^{k+n}\xi_i\right)^2\leq Cn.
\end{equation*}
Then
\begin{equation*}
\lim_{n\rightarrow\infty}\frac{1}{\sqrt{n}\log^2 n}\sum_{i=1}^n \xi_i=0
\end{equation*}
almost surely.
\end{lemma}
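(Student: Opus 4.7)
The plan is to combine a Serfling--Moricz style maximal moment inequality with a Borel--Cantelli argument along a geometric subsequence, followed by an interpolation between consecutive dyadic levels. Throughout, write $S_j = \sum_{i=1}^j \xi_i$.

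First I would establish the maximal moment bound
\[
 E \max_{1 \leq j \leq n} S_j^2 \leq C' n (\log n)^2
\]
for some constant $C'>0$. The key tool is dyadic chaining: writing any index $j \leq 2^L$ in binary decomposes the set $\{1,\ldots,j\}$ into at most $L = \lceil \log_2 n\rceil$ disjoint dyadic blocks of lengths $2^0, 2^1,\ldots$. Hence $S_j$ is a sum of $O(\log n)$ block partial sums, each of $L^2$-norm at most $\sqrt{C\, 2^m}$ by the hypothesis applied with $n=2^m$. A Cauchy--Schwarz argument simultaneously in $j$ then produces the claimed bound with an extra factor $(\log n)^2$.

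Next I would specialize this to the geometric subsequence $n_l = 2^l$. Markov's inequality gives, for any $\epsilon > 0$,
\[
 P\Bigl(\max_{1 \leq j \leq 2^l} |S_j| > \epsilon\, 2^{l/2} l^2\Bigr) \leq \frac{C'\, 2^l\, l^2}{\epsilon^2\, 2^l\, l^4} = O\bigl(l^{-2}\bigr),
\]
which is summable in $l$. Borel--Cantelli then yields $\max_{1 \leq j \leq 2^l} |S_j|/(2^{l/2} l^2) \to 0$ almost surely. To interpolate, for $2^l < n \leq 2^{l+1}$ one has $\sqrt{n} \geq 2^{l/2}$ and $\log n \geq l \log 2$, so
\[
 \frac{|S_n|}{\sqrt{n}\log^2 n} \leq \frac{\max_{1 \leq j \leq 2^{l+1}} |S_j|}{(\log 2)^2\, 2^{l/2}\, l^2} \longrightarrow 0 \quad \text{a.s.},
\]
finishing the argument.

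The main obstacle is the first step. In the orthogonal or uncorrelated stationary case the maximal inequality is the classical Menshov--Rademacher result, but here we only know a uniform second moment bound on partial sums over arbitrary index windows; there is no martingale or orthogonality structure to invoke. The Serfling/Moricz dyadic chaining is exactly the device that upgrades a partial-sum variance bound to a maximal moment bound, at the price of the polylogarithmic factor $(\log n)^2$. It is no accident that this same factor matches the normalization $\sqrt{n}\log^2 n$ in the conclusion: the $\log^2$ loss from chaining is what forces the extra $\log^2 n$ in the denominator, and any improvement of the almost-sure rate would need a stronger input than the second-moment bound assumed here.
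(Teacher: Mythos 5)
Your proposal is correct: the paper states this lemma without proof (it is cited from Shao and Yu), and your argument --- upgrading the uniform window variance bound to $E\max_{j\le n}S_j^2\le C n(\log 2n)^2$ via the Serfling/M\'oricz dyadic chaining, then Chebyshev and Borel--Cantelli along $n=2^l$ with interpolation --- is exactly the standard proof, and the same Serfling device the paper itself invokes elsewhere (e.g.\ in the proof of Theorem 2.2). The only cosmetic points are that the Borel--Cantelli step should be run over a countable sequence $\epsilon_m\downarrow 0$ to conclude a.s.\ convergence to zero, and your closing remark is slightly too strong: the same input in fact yields $o(\sqrt{n}(\log n)^{3/2+\delta})$, so the exponent $2$ is convenient rather than forced.
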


\begin{lemma}[Yokoyama \cite{yoko}]\label{lem3} Let $\left(X_n\right)_{n\in\N}$ be a stationary strongly mixing sequence of r.v.'s with $E X_1 = \mu$ and $(E \left|X_1\right|^{2 + \delta})^{\frac{1}{2 + \delta}} < \infty$ for some $ 0 < \delta \leq \infty$ suppose that $2\leq s < 2 + \delta$ and
\[
\sum^\infty_{n =1} n^{\frac{s}{2} - 1} \left(\alpha (n)\right)^{(2 + \delta - s)/ (2 + \delta)} < \infty.
\]
Then there exists a constant $C$ depending only on $s, \delta$ and the mixing coefficients $\left(\alpha(n)\right)_{n\in\N}$ such that
\begin{equation*}
E \left| \sum^n_{i =1} (X_i - \mu)\right|^s \leq C n^{s/2} (E\left|X_1 \right|^{2 + \delta})^{\frac{s}{2 + \delta}}.
\end{equation*}
\end{lemma}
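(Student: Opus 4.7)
My plan is to reduce to $\mu = 0$ and abbreviate $M := (E|X_1|^{2+\delta})^{1/(2+\delta)}$. I would split the argument into a base case and a blocking/interpolation step.

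\textbf{Base case $s=2$.} By stationarity,
\[
E S_n^2 = nEX_1^2 + 2\sum_{k=1}^{n-1}(n-k)\,\Cov(X_1, X_{1+k}),
\]
and Davydov's covariance inequality gives $|\Cov(X_1,X_{1+k})|\le 12\, M^2\,\alpha(k)^{\delta/(2+\delta)}$. The hypothesis specialized to $s=2$ reduces to $\sum_k\alpha(k)^{\delta/(2+\delta)}<\infty$, so $E S_n^2\le CnM^2$, which is the claim at the exponent $s=2$.

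\textbf{General $s\in(2,2+\delta)$.} I would use a blocking argument. Partition $\{1,\ldots,n\}$ into $k=\lfloor n/q\rfloor$ consecutive blocks of length $q$, let $\eta_j$ denote the $j$-th block sum, and write $S_n=\sum_j\eta_j+R_n$ with $R_n$ a negligible boundary remainder. The base case yields $E\eta_j^2\le CqM^2$, while H\"older interpolation between $L^2$ and $L^{2+\delta}$, together with Minkowski's inequality giving $\|\eta_j\|_{2+\delta}\le qM$, provides $E|\eta_j|^s\le Cq^{s/2}M^s$. A Rosenthal-type inequality for the $\eta_j$, obtained by coupling them to an independent sequence $(\eta_j')$ via Bradley's lemma, produces
\[
E\Bigl|\sum_j\eta_j\Bigr|^s\le C\bigl(k\,E|\eta_1|^s+(k\,E\eta_1^2)^{s/2}\bigr)+\mathrm{error},
\]
with an error controlled by $n^{s/2}\alpha(q)^{(2+\delta-s)/(2+\delta)}$. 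Choosing $q=q(n)$ so that the tail $\sum_{m\ge q}m^{s/2-1}\alpha(m)^{(2+\delta-s)/(2+\delta)}$ is small (possible because the full series converges by hypothesis) absorbs the error into $Cn^{s/2}M^s$, yielding the claimed bound.

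\textbf{Main obstacle.} The delicate point is making the Rosenthal-type inequality for mixing block sums rigorous: Bradley's coupling provides the required independent sequence only after a truncation, and one must carefully balance the truncation level, the coupling error, and the classical Rosenthal bound applied to the independent copies. Yokoyama's original route avoids explicit coupling by expanding $ES_n^{2m}$ for even integer $2m$ via iterated Davydov covariance inequalities sorted by the largest index gap, and then interpolating in $s$ between consecutive even integers using log-convexity of $t\mapsto \log E|S_n|^t$; that route replaces the coupling difficulty with an intricate combinatorial accounting of cross-moment contributions, but in either approach the summability condition emerges naturally as the integrated cost of the blocking.
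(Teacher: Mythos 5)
The paper does not prove this lemma at all: it is stated in the ``Preliminary results'' section as a known theorem quoted from Yokoyama (1980), so there is no in-paper argument to compare yours against and your sketch must stand on its own. Your base case $s=2$ is correct. The gap is in the step for $s\in(2,2+\delta)$: you claim $E|\eta_j|^s\le Cq^{s/2}M^s$ for the block sums by ``H\"older interpolation between $L^2$ and $L^{2+\delta}$, together with Minkowski's inequality giving $\|\eta_j\|_{2+\delta}\le qM$''. That interpolation does not yield this bound. Writing $\tfrac1s=\tfrac{1-\theta}{2}+\tfrac{\theta}{2+\delta}$, you get $\|\eta_j\|_s\le\|\eta_j\|_2^{1-\theta}\|\eta_j\|_{2+\delta}^{\theta}\le C q^{(1-\theta)/2}\,q^{\theta}\,M=Cq^{(1+\theta)/2}M$, hence $E|\eta_j|^s\le Cq^{s(1+\theta)/2}M^s$ with $\theta>0$, which is strictly worse than $q^{s/2}M^s$. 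The bound you assert for the blocks is precisely the conclusion of the lemma at scale $q$, so as written the step is circular. It can be repaired only by turning the argument into a genuine induction on scales (e.g.\ a dyadic recursion $n\mapsto 2n$, which is essentially what Yokoyama's proof does), or by a single-shot choice of $q$ in which the weaker exponent $s(1+\theta)/2$ is still absorbed --- but in that case you must actually establish the Rosenthal-type inequality with the stated error term, and for strong mixing (unlike absolute regularity) this is the hard part: Bradley's coupling controls only $P(|\eta_j-\eta_j'|>x)$ after a truncation, and converting that into a bound on $E\bigl|\,|\sum_j\eta_j|^s-|\sum_j\eta_j'|^s\,\bigr|$ requires block moments beyond those you have produced. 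You honestly flag this as ``the delicate point'', but it is not a detail to be deferred; it is where the hypothesis $\sum_n n^{s/2-1}\alpha(n)^{(2+\delta-s)/(2+\delta)}<\infty$ is actually consumed, and without it the proof is incomplete.

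A secondary inaccuracy: your description of Yokoyama's own route --- expanding $ES_n^{2m}$ for even integers and then interpolating in $s$ between consecutive even integers --- cannot work in the generality of the statement, because when $2+\delta<4$ no even moment of $X_1$ above the second exists, so there is no higher even-integer anchor to interpolate from. The classical argument keeps $s$ fixed and real throughout and inducts on the length of the sum, exploiting the covariance inequality across a gap inserted between the two halves; the moment condition of order $2+\delta$ enters only through that covariance inequality, exactly as in your $s=2$ case.
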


\begin{lemma}[Rio \cite{rio}, Peligrad \cite{peli}]\label{lem4} Let $\left(X_n\right)_{n\in\mathds{N}}$ be a strongly mixing sequence of r. v.'s with $E X_i = 0$ and $\left| X_i \right| \leq C$ a.s. Then there is a universal constant $K$ such that for every $x > 0$ and $n \geq 1$
\begin{equation*}
P \left(\max_{1 \leq i \leq n} \left| S_i \right| > x\right) \leq K x^{-2} \left( \sum^n_{i =1} E X^2_i + C^2 \cdot n \sum^n_{i =1} \alpha (i)\right).
\end{equation*}
\end{lemma}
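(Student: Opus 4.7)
The right-hand side has the structural form of a Chebyshev-type bound, so the plan is to first derive a bound on $E\max_{1\le i\le n} S_i^2$ by the expression in parentheses (times a universal constant) and then invoke Chebyshev's inequality to obtain the probability estimate. The two main building blocks are a covariance inequality for bounded mixing sequences and a maximal inequality that passes from the second moment of $S_n$ to the second moment of the running maximum without losing a logarithmic factor.

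First, I would establish the standard covariance bound $|\Cov(X_i,X_j)|\le 4C^2\alpha(|i-j|)$, which follows directly from the definition of $\alpha$ via a Hahn-decomposition argument and is valid for any pair of $\mathcal{F}^i_1$- and $\mathcal{F}^\infty_{j}$-measurable bounded random variables. Expanding the square and summing the covariance bound over pairs gives, for every $0\le a<b\le n$,
\begin{equation*}
E(S_b-S_a)^2 \le \sum_{i=a+1}^b EX_i^2 + 8C^2(b-a)\sum_{k=1}^{b-a}\alpha(k)=:f(a,b),
\end{equation*}
and one checks that $f$ is super-additive on adjacent intervals. In particular, taking $a=0$ and $b=n$ already yields the desired estimate for $E S_n^2$ up to the universal constant.

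The crucial step is then to upgrade this second-moment estimate to a bound of the same order on $E\max_{1\le i\le n} S_i^2$. A naive application of the Móricz--Serfling maximal inequality for super-additive functionals would introduce a $(\log n)^2$ factor. To avoid this, I would use the dyadic recursion of Rio and Peligrad: split $\{1,\ldots,n\}$ into two halves, apply the bound inductively on each half, and control the cross term by combining the inductive hypothesis with a further covariance estimate in which the a.s. bound $|X_i|\le C$ dominates the off-diagonal contributions. Super-additivity of $f$ ensures that the accumulated constant remains a universal $K$, and a final application of Chebyshev's inequality gives the stated tail bound.

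The main obstacle is this log-free maximal step. The covariance/moment inequality is essentially routine, but removing the logarithm in the maximal version requires that the a.s. boundedness be used in an essential way, since the cross terms appearing in the recursion are controlled through the uniform bound $C$ rather than through an $L^2$ estimate. This is exactly where the universal constant $K$ absorbs the geometric losses of the dyadic iteration.
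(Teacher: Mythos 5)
First, a remark on the comparison itself: the paper does not prove this lemma at all --- it is imported verbatim from Rio \cite{rio} and Peligrad \cite{peli} as a known tool --- so your attempt can only be measured against the standard argument behind that citation. The routine parts of your outline are correct: the covariance bound $\left|\Cov(X_i,X_j)\right|\le 4C^2\alpha(j-i)$ gives $ES_n^2\le\sum_{i=1}^nEX_i^2+8C^2n\sum_{k=1}^n\alpha(k)$, and Chebyshev's inequality converts any bound on $E\max_{1\le i\le n}S_i^2$ of that order into the stated tail estimate.

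The genuine gap is that the entire content of the lemma lies in the one step you do not carry out: the maximal inequality without a logarithmic factor. Appealing to ``the dyadic recursion of Rio and Peligrad'' is circular, since the lemma \emph{is} Rio's result; and the bisection you sketch does not close. Set $M(a,b)=E\max_{a<k\le b}(S_k-S_a)^2$ and $f(a,b)=\sum_{i=a+1}^bEX_i^2+8C^2(b-a)\sum_{k\le b-a}\alpha(k)$. Splitting at $n/2$ gives at best $M(0,n)\le M(0,n/2)+M(n/2,n)+ES_{n/2}^2+2\left(ES_{n/2}^2\right)^{1/2}\left(M(n/2,n)\right)^{1/2}$. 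Because $f$ is only additive across adjacent blocks (up to constants), the ansatz $M\le Kf$ leads to a consistency requirement of the form $K\ge K+c\sqrt{K}$ with $c>0$, which no universal $K$ satisfies: every one of the $\log_2 n$ levels of the recursion contributes a cross term of full order $f(0,n)$, and this is exactly the mechanism that produces the $(\log n)^2$ in the M\'oricz--Serfling bound. The a.s.\ bound $\left|X_i\right|\le C$ cannot rescue this, since the offending cross term is an $L^2$--$L^2$ pairing of two partial maxima, not a quantity the uniform bound can dominate. Rio's actual proof is not an induction on $n$: it starts from a deterministic, Garsia-type pathwise inequality bounding $\left(\max_{1\le k\le n}\max(S_k,0)\right)^2$ by $4\sum_{k=1}^nX_kD_k$ for suitable functionals $D_k$ of $X_k,\dots,X_n$, and then estimates each $E\left[X_kD_k\right]$ by a covariance inequality (formulated with quantile functions, which for bounded variables collapse to the $C^2\alpha(i)$ terms). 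Without that identity, or a substitute of equal strength, your outline does not yield the universal constant $K$.
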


\begin{lemma}[Borovkova et al. \cite{boro}]\label{lem5} Let $\left(X_n\right)_{n\in\mathds{N}}$ be a stationary sequence which is near epoch dependent (with approximation constants $\left(a_k\right)_{k\in\N}$) on an absolutely regular process $\left(Z_n\right)_{n\in\mathds{Z}}$ with mixing coefficients $\left( \beta (k)\right)_{k\in\N }$. Suppose that one of the following two conditions holds
\begin{enumerate}
 \item $ E X_0 = 0, E \left| X_0 \right|^{4 + \delta} < \infty,$ \\
$\sum^\infty_{k = 1} k^2 ( a_k^{\frac{\delta}{3 + \delta}} + ( \beta ( k ))^{\frac{\delta}{4 + \delta}}) < \infty\ \mbox { for some } \delta > 0.$
\item $ X_0 \mbox { is bounded a.s.}, EX_0 = 0,$\\
$\sum^\infty_{k =1} k^2 ( a_k + \beta (k)) < \infty.$
\end{enumerate}
Then $ \sigma^2 = EX^2_0 + 2 \sum^\infty_{k = 1} EX_0 X_k<\infty$ and in the case $ \sigma^2 > 0$ we have
\[
\frac{1}{\sqrt{n}} S_n \rightarrow N ( 0, \sigma^2)\quad \mbox { in distribution as } n \rightarrow \infty.
\]
\end{lemma}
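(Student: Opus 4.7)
The plan is to carry out a truncation-plus-blocking argument that reduces the CLT for the NED sequence to one for an approximating $\beta$-mixing sequence. Introduce the conditional approximations
\begin{equation*}
X_i^{(l)} := E\bigl[X_i \bigm| \mathcal{F}_{i-l}^{\,i+l}\bigr],
\end{equation*}
which are measurable with respect to $\sigma(Z_{i-l},\ldots,Z_{i+l})$ and satisfy $E|X_i - X_i^{(l)}| \le a_l$. The core analytic step is a covariance inequality of the shape
\begin{equation*}
|\Cov(X_0,X_k)| \le C\bigl(a_{\lfloor k/3\rfloor}^{\delta/(3+\delta)} + \beta(\lfloor k/3\rfloor)^{\delta/(4+\delta)}\bigr),
\end{equation*}
obtained by splitting $\Cov(X_0,X_k) = \Cov(X_0^{(l)},X_k^{(l)}) + E[(X_0-X_0^{(l)})X_k] + E[X_0^{(l)}(X_k - X_k^{(l)})]$ with $l=\lfloor k/3\rfloor$, bounding the main $\beta$-mixing term by Davydov's covariance inequality under $(4+\delta)$-moments, and the two remainders via H\"older interpolation between the $L^1$ bound $a_l$ and the $L^{4+\delta}$ tail. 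Summing this estimate against the hypothesis $\sum_k k^2(a_k^{\delta/(3+\delta)}+\beta(k)^{\delta/(4+\delta)})<\infty$ establishes $\sigma^2<\infty$ and $\Var(S_n)/n\to\sigma^2$.

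For the distributional limit, note that for each fixed $l$ the sequence $(X_i^{(l)})_i$ depends on at most $2l+1$ consecutive innovations, hence is itself absolutely regular with $\beta^{(l)}(k)\le \beta(\max(k-2l,0))$, and inherits the $(4+\delta)$-moment (or boundedness) hypothesis. A classical Bernstein big-block/small-block decomposition, with the large blocks coupled to independent copies via Berbee's lemma, then produces $S_n^{(l)}/\sqrt n \Rightarrow N(0,\sigma_l^2)$, where $\sigma_l^2:=\lim_n \Var(S_n^{(l)})/n$. Transfer back to $(X_i)$ proceeds by applying the same covariance inequality to $(X_i-X_i^{(l)})$: one obtains $\limsup_n E(S_n-S_n^{(l)})^2/n \le \eta(l)$ with $\eta(l)\to 0$ as $l\to\infty$, which forces $\sigma_l^2\to\sigma^2$. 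A Slutsky-type sandwich, combining $S_n^{(l)}/\sqrt n \Rightarrow N(0,\sigma_l^2)$ with Chebyshev's inequality for $S_n-S_n^{(l)}$ and letting $n\to\infty$ then $l\to\infty$, completes the proof.

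The main obstacle is the covariance inequality: because near epoch dependence is imposed only in $L^1$, the decomposition must compensate the weak $L^1$-control on $X_i-X_i^{(l)}$ against the $L^{4+\delta}$-moment via H\"older interpolation, and the exponent $\delta/(3+\delta)$ in the summability hypothesis arises precisely from this loss (in contrast to the $\delta/(2+\delta)$ one would obtain under $L^2$-NED). Equally delicate is the choice of block sizes $(p_n,q_n)$ in the Bernstein argument so that the small-block variance, the Berbee coupling error $\sim n\beta(q_n)/p_n$, and the Lindeberg term all vanish simultaneously; the $k^2$-weight in the summability hypothesis provides exactly the room needed for this triple balance. Under the bounded condition (2) these interpolations collapse to trivial $L^\infty$ estimates and the argument simplifies considerably.
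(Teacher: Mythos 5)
This lemma is imported verbatim from Borovkova, Burton and Dehling \cite{boro}; the paper itself gives no proof, only the citation. Your sketch --- finite-window conditional expectations $X_i^{(l)}$, a covariance inequality obtained by H\"older interpolation between the $L^1$ approximation bound and the $(4+\delta)$-moment, a Bernstein blocking/Berbee coupling CLT for the absolutely regular approximants, and the $L^2$ transfer $\limsup_n E(S_n-S_n^{(l)})^2/n\to 0$ --- is essentially the argument of that source, and the one technically delicate step you would still need to write out (that the bound on $E(S_n-S_n^{(l)})^2/n$ is not merely uniform in $l$ but tends to zero, which requires interpolating $\|X_0-X_0^{(l)}\|_2$ against $a_l$ and dominating the covariance series uniformly) is one you have correctly identified as the crux.
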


\begin{lemma}[Borovkova et al. \cite{boro}]\label{lem6} Let $\left(X_n\right)_{n\in\mathds{N}}$ be a stationary sequence which is near epoch dependent (with approximation constants $\left(a_k\right)_{k\in\N}$) on an absolutely regular process $\left(Z_n\right)_{n\in\mathds{Z}}$ with mixing coefficients $\left(\beta(k)\right)_{k\in\mathds{N}}$. Assume that one of the conditions of Lemma \ref{lem5} holds. Then there exists a constant $C$ such that
\[
E S_n^4 \leq C n^2.
\]
\end{lemma}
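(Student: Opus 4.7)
The plan is to expand the fourth moment by multilinearity and control it via a fourth-order covariance inequality for NED functionals of absolutely regular processes. Writing
\[
E S_n^4 = \sum_{1 \leq i_1, i_2, i_3, i_4 \leq n} E[X_{i_1} X_{i_2} X_{i_3} X_{i_4}],
\]
I reduce by symmetry to ordered tuples $i_1 \leq i_2 \leq i_3 \leq i_4$ with gaps $g_j = i_{j+1} - i_j$, and split the analysis into three cases according to which of $g_1, g_2, g_3$ is the largest. In the middle case I use the identity
\[
E[X_{i_1}X_{i_2}X_{i_3}X_{i_4}] = E[X_{i_1}X_{i_2}]\, E[X_{i_3}X_{i_4}] + \Cov(X_{i_1}X_{i_2},\, X_{i_3}X_{i_4}),
\]
while in the extreme cases ($g_1$ or $g_3$ largest) I rewrite the expectation directly as a covariance using $EX_1 = 0$, e.g.\ $E[X_{i_1}X_{i_2}X_{i_3}X_{i_4}] = \Cov(X_{i_1},\, X_{i_2}X_{i_3}X_{i_4})$.

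The main technical step is to bound each of these covariances by $\Phi(g)$, where $g$ is the largest gap and
\[
\Phi(g) := C\bigl(a_{\lfloor g/3\rfloor}^{\delta/(3+\delta)} + \beta(\lfloor g/3\rfloor)^{\delta/(4+\delta)}\bigr)
\]
in case (1), while $\Phi(g) := C\bigl(a_{\lfloor g/3\rfloor} + \beta(\lfloor g/3\rfloor)\bigr)$ in case (2). To prove this I approximate each $X_{i_j}$ by the truncated conditional expectation $Y_{i_j} := E[\phi_M(X_{i_j}) \mid Z_{i_j - l}, \ldots, Z_{i_j + l}]$ with $l = \lfloor g/3\rfloor$ and $\phi_M$ a truncation at a level $M$ to be optimized. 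The NED error $E|X_{i_j} - E[X_{i_j} \mid Z_{i_j - l}, \ldots, Z_{i_j + l}]| \leq a_l$ is transferred from $L^1$ to a weighted higher-moment norm by Hölder interpolation against the $L^{4+\delta}$ moment, producing the factor $a_l^{\delta/(3+\delta)}$; the truncation error is handled by Markov's inequality; and the covariance between bounded functions of the two disjoint $\sigma(Z_\cdot)$-blocks, which are separated by at least $l$ time units, is bounded by the absolutely regular covariance inequality of Yoshihara / Volkonskii--Rozanov type, producing $\beta(l)^{\delta/(4+\delta)}$ after optimally balancing in $M$. Case (2) is the same argument without truncation.

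With the inequality in hand, the rest is bookkeeping. The paired contribution, summed over ordered tuples with $g_2$ the largest gap, is at most
\[
\sum_{i_1 = 1}^{n} \sum_{g_1, g_3 \geq 0} |\gamma(g_1)|\, |\gamma(g_3)| \cdot \#\{g_2 : g_2 \leq n\} \leq n \cdot n \cdot \Bigl(\sum_k |\gamma(k)|\Bigr)^2 = O(n^2),
\]
using absolute summability of $\gamma(k) = E[X_1 X_{1+k}]$, which holds under Lemma \ref{lem5}'s hypotheses. The correction terms and the extreme-case contributions each sum to
\[
n \sum_{g \geq 0} (g+1)^2 \Phi(g) = O(n)
\]
by the summability assumption. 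Combined, $E S_n^4 \leq Cn^2$. The main obstacle is the fourth-order covariance inequality itself: unlike the two-term analog one must approximate four variables simultaneously and couple the truncation level $M$ across all four factors, and the exponents $\delta/(3+\delta)$, $\delta/(4+\delta)$ together with the $k^2$ weight in the summability condition arise precisely from this bookkeeping.
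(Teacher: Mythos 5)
The paper does not prove this lemma; it is quoted verbatim from Borovkova, Burton and Dehling (2001), and your sketch reconstructs essentially the argument given there: expand $ES_n^4$ over ordered quadruples, split according to the largest gap, subtract the product $E[X_{i_1}X_{i_2}]E[X_{i_3}X_{i_4}]$ when the middle gap dominates, and control the remainder by a fourth-order covariance inequality summed against the $k^2$-weighted summability hypothesis. The one economy you are missing is that the ``main technical step'' you propose to prove from scratch --- the bound
\[
\bigl|E(X_iX_jX_kX_l)-E(X_iX_j)E(X_kX_l)\bigr|\leq C\Bigl(\beta\bigl(\bigl[\tfrac{k-j}{3}\bigr]\bigr)^{\frac{\delta}{4+\delta}}+a_{[\frac{k-j}{3}]}^{\frac{\delta}{3+\delta}}\Bigr)
\]
--- is exactly Lemma \ref{lem8} of this paper (also taken from Borovkova et al.), with precisely the exponents and the $[\frac{k-j}{3}]$ shift you describe, so you could cite it rather than rerun the truncation/approximation/balancing argument. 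Two small points to tidy up: Lemma \ref{lem8} only covers the split between positions $j$ and $k$, so your extreme cases ($g_1$ or $g_3$ largest), where you need $|\Cov(X_{i_1},X_{i_2}X_{i_3}X_{i_4})|\leq\Phi(g_1)$, require the analogous one-versus-three covariance inequality, which follows by the same method but is not literally the quoted lemma; and the absolute summability of $\gamma(k)=E[X_1X_{1+k}]$ that you invoke for the $O(n^2)$ paired term should be noted to follow from the same covariance machinery under the hypotheses of Lemma \ref{lem5} (it is what makes $\sigma^2$ finite there). With those two remarks supplied, the argument is complete and is the intended one.
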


\begin{lemma}[Borovkova et al. \cite{boro}]\label{lem7} Let $\left(X_n\right)_{n\in\mathds{N}}$ be a stationary sequence which is near epoch dependent (with approximation constants $\left(a_k\right)_{k\in\N}$) on an absolutely regular process $\left(Z_n\right)_{n\in\mathds{Z}}$ with mixing coefficients $ \left(\beta(k)\right)_{k\in\mathds{N}}.$ Assume that $ EX_0 = 0$ and one of the following two conditions holds:
\begin{enumerate}
 \item $X_0 \mbox { is bounded a. s. and } \sum^\infty_{ k = 0} (a_k + \beta (k)) < \infty,$
 \item $E \left|X_0\right|^{2 + \delta} < \infty \mbox { and } \sum^\infty_{k = 0} ( a_k^{\frac{\delta}{1 + \delta}} + ( \beta (k))^{\frac{\delta}{2 + \delta}} ) < \infty.$
\end{enumerate}
 Then there exists a constant $C$ such that
 \[
 E S_n^2 \leq C n.
 \]
 \end{lemma}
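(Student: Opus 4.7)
The plan is to reduce the bound on $ES_n^2$ to a summability statement on covariances. Writing
\begin{equation*}
E S_n^2 = n\, EX_0^2 + 2\sum_{k=1}^{n-1}(n-k)\, E X_0 X_k,
\end{equation*}
and noting that $EX_0^2<\infty$ under either hypothesis, it suffices to prove that $\sum_{k=1}^{\infty} |EX_0 X_k|<\infty$, for then $|ES_n^2|\le n(EX_0^2 + 2\sum_{k=1}^{\infty}|EX_0 X_k|)\le Cn$.

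To bound $|EX_0 X_k|$ I would exploit the near epoch dependence by approximating with conditional expectations on the underlying absolutely regular process. Set $\widetilde X_i^{(l)} := E[X_i\mid \mathcal F_{i-l}^{i+l}]$; then $\widetilde X_0^{(l)}$ and $\widetilde X_k^{(l)}$ are measurable with respect to $\sigma$-fields of $(Z_n)_{n\in\Z}$ separated by a gap of $k-2l$ indices, and $E\widetilde X_i^{(l)}=0$. Choosing $l=\lfloor k/3\rfloor$ makes this gap at least of order $k/3$. Split
\begin{equation*}
|EX_0 X_k| \le \bigl|E(X_0-\widetilde X_0^{(l)})X_k\bigr| + \bigl|E\widetilde X_0^{(l)}(X_k-\widetilde X_k^{(l)})\bigr| + \bigl|E\widetilde X_0^{(l)}\widetilde X_k^{(l)}\bigr|.
\end{equation*}

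The last, purely mixing term is handled by a Davydov-type covariance inequality for absolutely regular $\sigma$-fields: in case (1), since $|\widetilde X_i^{(l)}|\le\|X_0\|_\infty$, it is bounded by $C\beta(k-2l)$, and in case (2), by Hölder,
\begin{equation*}
\bigl|E\widetilde X_0^{(l)}\widetilde X_k^{(l)}\bigr| \le C\,\|X_0\|_{2+\delta}^{2}\,\beta(k-2l)^{\delta/(2+\delta)}.
\end{equation*}
The two approximation error terms are controlled by the NED property $E|X_i-\widetilde X_i^{(l)}|\le a_l$. In case (1) the summands are bounded, so a direct $L^1$--$L^\infty$ estimate yields $C a_l$. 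In case (2) I would combine Hölder with Lyapunov's interpolation inequality: for $Y=X_k-\widetilde X_k^{(l)}$ one has $\|Y\|_1\le a_l$ and $\|Y\|_{2+\delta}\le 2\|X_0\|_{2+\delta}$, hence $\|Y\|_{(2+\delta)/(1+\delta)}\le a_l^{\delta/(1+\delta)}\|Y\|_{2+\delta}^{1/(1+\delta)}$, and therefore
\begin{equation*}
\bigl|EX_0(X_k-\widetilde X_k^{(l)})\bigr| \le \|X_0\|_{2+\delta}\,\|Y\|_{(2+\delta)/(1+\delta)} \le C\, a_l^{\delta/(1+\delta)}.
\end{equation*}

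Putting the three pieces together with $l=\lfloor k/3\rfloor$ gives $|EX_0 X_k|\le C(a_{\lfloor k/3\rfloor}+\beta(\lceil k/3\rceil))$ in case (1) and $|EX_0 X_k|\le C\bigl(a_{\lfloor k/3\rfloor}^{\delta/(1+\delta)}+\beta(\lceil k/3\rceil)^{\delta/(2+\delta)}\bigr)$ in case (2). Both series are summable by the respective assumption of the lemma, which closes the argument. The main obstacle is the intermediate step in case (2): one has only an $L^1$ bound from NED but needs a bound on a norm strong enough to pair with the $L^{2+\delta}$ Davydov inequality. The Lyapunov interpolation between $L^1$ and $L^{2+\delta}$ is exactly what produces the exponent $\delta/(1+\delta)$ appearing in the hypothesis.
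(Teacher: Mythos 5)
Your argument is correct, and it is essentially the standard proof: the paper itself states this lemma without proof, importing it from Borovkova, Burton and Dehling \cite{boro}, and their derivation proceeds exactly as you do --- reduce $ES_n^2\le Cn$ to summability of $|EX_0X_k|$, telescope through the conditional expectations $E[X_i\mid\mathcal F_{i-l}^{i+l}]$ with $l=\lfloor k/3\rfloor$, bound the purely mixing term by a Davydov-type covariance inequality for absolutely regular $\sigma$-fields, and control the approximation terms via the $L^1$--$L^{2+\delta}$ interpolation that produces the exponents $\delta/(1+\delta)$ and $\delta/(2+\delta)$ in the hypotheses. No gaps worth flagging.
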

 
\begin{lemma}[Borovkova et al. \cite{boro}]\label{lem8} Let $\left(X_n\right)_{n\in\mathds{N}}$ be a stationary sequence which is near epoch dependent (with approximation constants $\left(a_k\right)_{k\in\N}$) on an absolutely regular process $\left(Z_n\right)_{n\in\mathds{Z}}$ with mixing coefficients $\left(\beta(k)\right)_{k\in\mathds{N}}$.\\
 Then
\begin{enumerate}
 \item if $\left| X_0\right| \leq M$ a. s. for all non-negative integers $i \leq j < k \leq l$, we have
\begin{multline*}
 \left| E (X_i X_j X_k X_l) - E (X_i X_j) E ( X_k X_l)\right|\leq \\
 \left( 4 ( \beta ( [ \frac{k-j}{3}]))^{\frac{\delta}{2 + \delta}} (E \left| X_0 \right|^{2 + \delta})^{\frac{2}{2 + \delta}} + 8 (a_{[\frac{k - j}{3}]})^{\frac{\delta}{1 + \delta}} ( E \left| X_0\right|^{2 + \delta})^{\frac{1}{1 + \delta}}\right)\cdot M^2.
\end{multline*}
\item if $E \left| X_0 \right|^{4 + \delta} < \infty$ for all non-negative $i \leq j < k \leq l$, we have
\begin{multline*}
\left| E (X_i X_j X_k X_l) - E (X_i X_j) E ( X_k X_l) \right|\\
\leq 4( \beta ( [ \frac{k - j}{3}]))^{\frac{\delta}{4 + \delta}} (E \left| X_0 \right|^{4 + \delta})^{\frac{4}{4 + \delta}} + 8 ( a_{[\frac{k - j}{3}]})^{\frac{\delta}{3 + \delta}} ( E \left| X_0 \right|^{4 + \delta})^{\frac{3}{3 + \delta}}.
\end{multline*}
\end{enumerate}

\end{lemma}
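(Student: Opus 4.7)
The plan is to combine the NED approximation of each $X_r$ by a function of a finite block of $Z_s$'s with a standard covariance inequality for absolutely regular sequences. Set $m = \left[\frac{k-j}{3}\right]$ and define $\tilde X_r = E[X_r \mid \mathcal F_{r-m}^{r+m}]$ for $r \in \{i,j,k,l\}$, so that by definition of near epoch dependence $E|X_r - \tilde X_r| \le a_m$. Since $i\le j$ and $k\le l$, the product $\tilde X_i\tilde X_j$ is measurable with respect to $\sigma(Z_s:s\le j+m)$, while $\tilde X_k\tilde X_l$ is measurable with respect to $\sigma(Z_s:s\ge k-m)$; the gap between these $\sigma$-fields equals $(k-m)-(j+m)=k-j-2m\ge m$, so their mutual absolute regularity coefficient is at most $\beta(m)$.

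I decompose
\[
E[X_iX_jX_kX_l]-E[X_iX_j]\,E[X_kX_l]=\Delta_1+\Delta_2,
\]
with $\Delta_1=E[\tilde X_i\tilde X_j\tilde X_k\tilde X_l]-E[\tilde X_i\tilde X_j]\,E[\tilde X_k\tilde X_l]$ and $\Delta_2$ collecting the remaining contributions. For $\Delta_1$ I apply the Doukhan--Rio covariance inequality $|\Cov(f,g)|\le 2(2\beta)^{1-1/p-1/q}\|f\|_p\|g\|_q$ to $f=\tilde X_i\tilde X_j$ and $g=\tilde X_k\tilde X_l$. In part (1), the choice $p=q=2+\delta$ together with $|\tilde X_r|\le M$ gives $\|f\|_{2+\delta}\le M(E|X_0|^{2+\delta})^{1/(2+\delta)}$, producing the first summand $4\beta(m)^{\delta/(2+\delta)}(E|X_0|^{2+\delta})^{2/(2+\delta)}M^2$. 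In part (2), the choice $p=q=(4+\delta)/2$ combined with the Cauchy--Schwarz bound $\|f\|_{(4+\delta)/2}\le(E|X_0|^{4+\delta})^{2/(4+\delta)}$ yields the corresponding $\beta(m)^{\delta/(4+\delta)}$ term.

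For $\Delta_2$ I telescope
\[
X_iX_jX_kX_l-\tilde X_i\tilde X_j\tilde X_k\tilde X_l=\sum_{r\in\{i,j,k,l\}}(X_r-\tilde X_r)\,P_r,
\]
where each $P_r$ is a product of three factors drawn from $\{X_s,\tilde X_s:s\neq r\}$, and I treat $E[X_iX_j]E[X_kX_l]-E[\tilde X_i\tilde X_j]E[\tilde X_k\tilde X_l]$ analogously. Each resulting summand is bounded by H\"older's inequality after interpolating $\|X_r-\tilde X_r\|_p$ between $L^1$ (where $E|X_r-\tilde X_r|\le a_m$) and either $L^\infty$ (bound $2M$ in part (1)) or $L^{4+\delta}$ (bound $\le 2(E|X_0|^{4+\delta})^{1/(4+\delta)}$ in part (2)). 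With the H\"older conjugate exponent $q$ chosen so that $\|P_r\|_q$ can be estimated via the boundedness of two of its three factors and an $L^{2+\delta}$-moment of the third (respectively, by the product of three $L^{4+\delta}$-moments in part (2)), one obtains the $a_m^{\delta/(1+\delta)}(E|X_0|^{2+\delta})^{1/(1+\delta)}M^2$ contribution in part (1) and the $a_m^{\delta/(3+\delta)}(E|X_0|^{4+\delta})^{3/(3+\delta)}$ contribution in part (2).

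The main obstacle is the bookkeeping of the H\"older exponents and interpolation constants, so that the powers of $M$, $a_m$, $\beta(m)$ and of the moments come out precisely as stated; in particular, a naive factorisation produces too many powers of $M$ (for instance $M^3$ instead of $M^2$ in part (1)), and one must distribute the pointwise bound versus the $L^{2+\delta}$-moment carefully across the three auxiliary factors of $P_r$. Once organised, summing the eight resulting telescoping contributions and absorbing numerical constants into the prefactors $4$ and $8$ of the statement finishes the proof.
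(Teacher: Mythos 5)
This lemma is imported verbatim from Borovkova, Burton and Dehling \cite{boro}; the paper states it without proof, so there is no in-paper argument to compare against. Your sketch essentially reconstructs the original proof from that reference, and its skeleton is sound: with $m=[\frac{k-j}{3}]$ the approximants $\tilde X_r=E[X_r\mid\mathcal{F}_{r-m}^{r+m}]$ make $\tilde X_i\tilde X_j$ and $\tilde X_k\tilde X_l$ measurable with respect to $\sigma$-fields separated by $k-j-2m\ge m$, and a Davydov/Yoshihara-type covariance inequality with $p=q=2+\delta$ (resp.\ $p=q=\frac{4+\delta}{2}$), combined with the $L^p$-contractivity of conditional expectation to get $\|\tilde X_i\tilde X_j\|_{2+\delta}\le M(E|X_0|^{2+\delta})^{1/(2+\delta)}$ (resp.\ $\|\tilde X_i\tilde X_j\|_{(4+\delta)/2}\le(E|X_0|^{4+\delta})^{2/(4+\delta)}$), produces the $\beta$-terms with exactly the stated exponents. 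The NED terms also check out along the route you indicate: in each of the eight telescoping summands, bound two factors of $P_r$ by $M$, pair the third with $X_r-\tilde X_r$ via H\"older with exponents $\frac{2+\delta}{1+\delta}$ and $2+\delta$ (resp.\ $\frac{4+\delta}{1+\delta}$ and $\frac{4+\delta}{3}$ with the generalized H\"older bound $\|P_r\|_{(4+\delta)/3}\le(E|X_0|^{4+\delta})^{3/(4+\delta)}$), and interpolate $\|X_r-\tilde X_r\|_{(2+\delta)/(1+\delta)}\le\|X_r-\tilde X_r\|_1^{\delta/(1+\delta)}\|X_r-\tilde X_r\|_{2+\delta}^{1/(1+\delta)}$; the exponents then combine to $a_m^{\delta/(1+\delta)}(E|X_0|^{2+\delta})^{1/(1+\delta)}M^2$ and $a_m^{\delta/(3+\delta)}(E|X_0|^{4+\delta})^{3/(3+\delta)}$ as claimed. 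You correctly flag that this distribution of exponents is the only delicate point, but you leave it unexecuted, and the final step of ``absorbing numerical constants into the prefactors $4$ and $8$'' is slightly glib: each telescoping term carries an extra factor such as $\|X_r-\tilde X_r\|_{2+\delta}^{1/(1+\delta)}\le(2M)^{1/(1+\delta)}$, so recovering the literal constants of the statement (rather than the inequality up to a universal constant, which is all the paper ever uses) needs one more line of care. As a proof of the lemma up to such constants, the proposal is correct and follows the same route as the cited source.
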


\subsection{Moment Inequalities for U-Statistics}

To control the moments of the degenerate part of a $U$-statistics, we need bounds for the covariance of $h_2$. Recall that $h_2$ is defined as
\begin{equation*}
h_2(x,y):=h(x,y) - h_1(x) -h_1(y) -\theta.
\end{equation*}

\begin{lemma}[Dehling, Wendler \cite{deh2}]\label{lem12}
Let $\left(X_n\right)_{n\in\mathds{N}}$ be a stationary process and $h$ a $P$-Lipschitz-continuous kernel with uniform $(2+\delta)$-moments for a $\delta>0$. Let be $\tau\geq0$ such that one of the following three conditions holds:
\begin{enumerate}
\item $\left(X_n\right)_{n\in\mathds{N}}$ is strongly mixing, $E\left|X_1\right|^\gamma<\infty$ for a $\gamma>0$ and $\sum_{k=0}^{n}k\alpha^{\frac{2\gamma\delta}{\gamma\delta+\delta+5\gamma+2}}\left(k\right)=O\left(n^\tau\right)$.
\item $\left(X_n\right)_{n\in\mathds{N}}$ is a near epoch dependent on an absolutely regular process and for $A_L=\sqrt{2\sum_{i=L}^{\infty}a_i}:$\\ $\sum_{k=0}^{n}k\left(\beta^{\frac{\delta}{2+\delta}}\left(k\right)+A^{\frac{\delta}{2+\delta}}_k\right)=O\left(n^\tau\right)$.
\end{enumerate}
Then:
\begin{equation*}
\sum_{i_{1},i_{2},i_{3},i_{4}=1}^{n}\left|E\left[h_{2}\left(X_{i_{1}},X_{i_{2}}\right)h_{2}\left(X_{i_{3}},X_{i_{4}}\right)\right]\right|=O\left(n^{2+\tau}\right).
\end{equation*}
\end{lemma}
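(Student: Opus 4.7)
The plan is to mimic the proof of Lemma \ref{lem8}, with the products $X_iX_j$, $X_kX_l$ replaced by the degenerate kernel evaluations $h_2(X_{i_a},X_{i_b})$. I would partition the quadruple sum by the ordering of $(i_1,i_2,i_3,i_4)$; by the symmetry of $h_2$ this reduces to finitely many cases indexed by (a) the relative ordering of the four indices and (b) the size $k$ of the largest gap between consecutive order statistics. Within each case, the target is the uniform covariance estimate
\begin{equation*}
\bigl|E[h_2(X_{i_1},X_{i_2})h_2(X_{i_3},X_{i_4})]\bigr|\le C\bigl(\beta^{\delta/(2+\delta)}(\lfloor k/3\rfloor)+A^{\delta/(2+\delta)}_{\lfloor k/3\rfloor}\bigr)
\end{equation*}
in the NED setting, and the analogous $\alpha$-mixing inequality in the strongly mixing setting.

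To establish this bound I would first use $P$-Lipschitz continuity of $h$ to replace each $X_{i_a}$ by its conditional proxy $X_{i_a}^{(l)}:=E[X_{i_a}\mid Z_{i_a-l},\ldots,Z_{i_a+l}]$ with $l=\lfloor k/3\rfloor$, so that the two resulting $h_2$-factors become measurable with respect to disjoint blocks of $(Z_n)_{n\in\mathds{Z}}$. The approximation error is controlled by combining the $P$-Lipschitz inequality on the event $\{|X-X^{(l)}|\le\epsilon\}$ with a Markov estimate on its complement using the uniform $(2+\delta)$-moments of $h$; optimizing in $\epsilon$ produces a kernel-level error of order $A_l^{\delta/(2+\delta)}$. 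Note that $h_1(x)-h_1(x')=E[h(x,Y')-h(x',Y')]$ inherits $P$-Lipschitz continuity from $h$, so that $h_2=h-h_1-h_1-\theta$ is itself $P$-Lipschitz with uniform $(2+\delta)$-moments. Once the proxies are in place, Rio's $\beta$-mixing covariance inequality applied to the two proxy factors supplies the $\beta^{\delta/(2+\delta)}(\lfloor k/3\rfloor)$ contribution.

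For the counting step, the number of ordered 4-tuples in $\{1,\ldots,n\}^4$ whose largest consecutive gap equals $k$ is at most $Cnk^2$: choose the minimum order statistic in $n$ ways and the three gaps with maximum $k$ in $O(k^2)$ ways. Combining with the covariance estimate and using $k\le n$,
\begin{equation*}
\sum_{i_1,\ldots,i_4=1}^n\bigl|E[h_2(X_{i_1},X_{i_2})h_2(X_{i_3},X_{i_4})]\bigr|\le Cn^2\sum_{k=0}^nk\bigl(\beta^{\delta/(2+\delta)}(\lfloor k/3\rfloor)+A_{\lfloor k/3\rfloor}^{\delta/(2+\delta)}\bigr)=O(n^{2+\tau})
\end{equation*}
by the summability hypothesis. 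The strong-mixing case proceeds identically, with the $\alpha$-mixing covariance inequality in place of Rio's and the moment bound $E|X_1|^\gamma<\infty$ used to control the Lipschitz approximation through truncation.

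The main obstacle is the interleaved case $i_{(1)}<i_{(2)}<i_{(3)}<i_{(4)}$ with the crossing pairing $\{i_1,i_2\}=\{i_{(1)},i_{(3)}\}$, $\{i_3,i_4\}=\{i_{(2)},i_{(4)}\}$: the largest gap may then lie inside a single $h_2$-factor, so a direct mixing decoupling between the two factors is unavailable. In that case one must exploit the degeneracy $E[h_2(x,Y)]=0$ (for $Y$ independent of $x$) after the conditional-expectation replacement, producing cancellation in the inner integral over the large-gap variable; this reinjects a usable gap between the surviving conditional factors and lets the mixing step close. The interplay between the degeneracy of $h_2$ and the $P$-Lipschitz smoothing is the delicate point of the argument.
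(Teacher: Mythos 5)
First, note that the paper does not prove Lemma \ref{lem12} at all: it is imported verbatim from Dehling and Wendler \cite{deh2} and used as a black box, so there is no in-paper argument to compare yours against. Your plan reconstructs the standard Yoshihara-type route that the cited reference follows (order the four indices, classify terms by the location and the size $k$ of the largest gap, prove a decay estimate in $k$ for each configuration, and close with the $O(nk^2)$ count of quadruples having largest gap $k$), and your NED approximation step --- replace each $X_i$ by $E[X_i\mid Z_{i-l},\ldots,Z_{i+l}]$ with $l=\lfloor k/3\rfloor$, control the replacement error by combining the $P$-Lipschitz bound on the event $\{|X-X^{(l)}|\le\epsilon\}$ with a moment bound on its complement, and optimize in $\epsilon$ --- is exactly how the exponent $\delta/(2+\delta)$ and the quantity $A_L$ arise. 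So the architecture is right.

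There is, however, one genuine flaw: the ``uniform covariance estimate'' you target, namely a bound on $\left|E\left[h_2(X_{i_1},X_{i_2})h_2(X_{i_3},X_{i_4})\right]\right|$ itself in terms of the \emph{largest} gap $k$, is false in the non-crossing configuration where the largest gap is the middle one. Take $i_1=i_2=1$ and $i_3=i_4=1+k$: then $E\left[h_2(X_1,X_1)h_2(X_{1+k},X_{1+k})\right]\rightarrow\left(Eh_2(X_1,X_1)\right)^2$ as $k\rightarrow\infty$, which is in general nonzero, so no decay in $k$ is available for the expectation of the product. What decays with the middle gap is only the covariance $E[AB]-E[A]E[B]$; the residual product $E[A]E[B]$ must be handled separately, bounding each factor $\left|Eh_2(X_{i},X_{j})\right|$ by the mixing or NED rate at the \emph{within-pair} gap $j-i$ (this is where the degeneracy $Eh_2(x,Y)=0$ enters for the non-crossing case, not only for the interleaved one) and then summing the two factors over their pairs independently. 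Your write-up invokes degeneracy only for the crossing configuration and presents the non-crossing middle-gap case as a direct decoupling of the two $h_2$-factors, which does not close as stated. Once you split $E[AB]$ into covariance plus product of expectations and run the degeneracy argument on each $h_2$-factor separately, the plan goes through and matches the argument of \cite{deh2}.
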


If $(X_n)_{n\in\N}$ is near epoch dependent, it is not clear that the same holds for $(h_1(X_n))_{n\in\N}$. The following Lemma is very similar to Proposition 2.11 of Borovkova et al. \cite{boro} and gives an answer:

\begin{lemma}\label{lem14}Let $\left(X_n\right)_{n\in\mathds{N}}$ be $L^1$ near epoch dependent on the process $(Z_n)_{n\in\mathds{Z}}$ with approximation constants $(a_l)_{l\in\mathds{N}}$ and $g$ satisfy the variation condition with constant $L$. 
\begin{enumerate}
\item If $E|g(X_1)|^{1+\delta}<\infty$, then $(g(X_n))_{n\in\N}$ is near epoch dependent on $(Z_n)_{n\in\mathds{Z}}$ with approximation constants
\begin{equation*}
a'_l=(L+2^{\frac{1+2\delta}{1+\delta}}\|g(X_1)\|_{1+\delta})a_{l}^{\frac{1+\delta}{1+2\delta}}.
\end{equation*}
\item If $g(X_1)$ is bounded, then $(g(X_n))_{n\in\N}$ is near epoch dependent on $(Z_n)_{n\in\mathds{Z}}$ with approximation constants
\begin{equation*}
a'_l=(L+2\|g(X_1)\|_{\infty})a_{l}^{\frac{1}{2}}.
\end{equation*}
\end{enumerate}
\end{lemma}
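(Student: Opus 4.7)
The plan is to reduce the $L^1$-NED statement for $(g(X_n))_{n\in\N}$ to the $L^1$-NED statement for $(X_n)_{n\in\N}$ using the variation condition on $g$. First, I would invoke the triangle inequality and the $L^1$-contraction of the conditional expectation: for any $\mathcal{F}_{-l}^l$-measurable $W$,
\begin{equation*}
E\bigl|g(X_1)-E[g(X_1)\,|\,\mathcal{F}_{-l}^l]\bigr|\leq E|g(X_1)-W|+E\bigl|E[W-g(X_1)\,|\,\mathcal{F}_{-l}^l]\bigr|\leq 2E|g(X_1)-W|.
\end{equation*}
The natural candidate is $W=g(\hat X_1)$ with $\hat X_1:=E[X_1\,|\,\mathcal{F}_{-l}^l]$, which is $\mathcal{F}_{-l}^l$-measurable by construction and satisfies $E|X_1-\hat X_1|\leq a_l$ directly from the $L^1$-NED hypothesis on $(X_n)$.

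Next I would split at a threshold $\epsilon>0$ to be optimized, writing
\begin{equation*}
E|g(X_1)-g(\hat X_1)|=E\bigl[|g(X_1)-g(\hat X_1)|\mathds{1}_{\{|X_1-\hat X_1|\leq\epsilon\}}\bigr]+E\bigl[|g(X_1)-g(\hat X_1)|\mathds{1}_{\{|X_1-\hat X_1|>\epsilon\}}\bigr].
\end{equation*}
The small-difference term is precisely what the variation condition on $g$ is designed to handle, yielding $\leq L\epsilon$. For the large-difference term, Markov's inequality gives $P(|X_1-\hat X_1|>\epsilon)\leq a_l/\epsilon$. In part (2), $|g(X_1)-g(\hat X_1)|\leq 2\|g(X_1)\|_\infty$ almost surely, so the large-difference term is at most $2\|g(X_1)\|_\infty\,a_l/\epsilon$; balancing this against $L\epsilon$ by $\epsilon\propto\sqrt{a_l}$ produces the claimed $a_l^{1/2}$ rate with constant $L+2\|g(X_1)\|_\infty$. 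In part (1), H\"older with conjugate exponents $1+\delta$ and $(1+\delta)/\delta$ yields
\begin{equation*}
E\bigl[|g(X_1)-g(\hat X_1)|\mathds{1}_{\{|X_1-\hat X_1|>\epsilon\}}\bigr]\leq \|g(X_1)-g(\hat X_1)\|_{1+\delta}\,(a_l/\epsilon)^{\delta/(1+\delta)},
\end{equation*}
which is then optimized over $\epsilon$ to produce the claimed exponent $(1+\delta)/(1+2\delta)$ and constant involving $2^{(1+2\delta)/(1+\delta)}\|g(X_1)\|_{1+\delta}$.

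I expect two technical obstacles. First, the variation condition in Definition~\ref{def1} is stated only for pairs $(X,X')$ with specific joint distributions; one has to argue, along the lines of Borovkova et al.'s Proposition~2.11 cited in \cite{boro}, that it extends to the pair $(X_1,\hat X_1)$, for example by approximating $\hat X_1$ by simple $\mathcal{F}_{-l}^l$-measurable functions and passing to the limit with the help of Fatou. Second, to bound $\|g(X_1)-g(\hat X_1)\|_{1+\delta}$ by essentially $2\|g(X_1)\|_{1+\delta}$ in part (1) one needs control of $\|g(\hat X_1)\|_{1+\delta}$; this is not immediate since $\hat X_1$ is a conditional mean of $X_1$ and need not share its marginal distribution. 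The clean workaround is either to replace $\hat X_1$ by a quantile-preserving $\mathcal{F}_{-l}^l$-measurable coupling of $X_1$, or to apply the conditional Jensen inequality to $|g|^{1+\delta}$ on the approximating blocks, and it is precisely this step that must most carefully mirror Borovkova et al.
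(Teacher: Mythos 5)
Your overall strategy is the right one, and it is essentially the argument behind Proposition 2.11 of Borovkova et al.\ \cite{boro}, which is exactly what the paper offers in place of a proof: bound $E\left|g(X_1)-E[g(X_1)|\mathcal{F}_{-l}^l]\right|$ by $2E\left|g(X_1)-g(\hat X_1)\right|$ with $\hat X_1=E[X_1|\mathcal{F}_{-l}^l]$, split at a threshold $\epsilon$, use the variation condition on $\{|X_1-\hat X_1|\leq\epsilon\}$ and H\"older plus Markov on the complement, then optimize $\epsilon$. The two obstacles you flag are genuine: the variation condition must be read in the marginal, supremum form of Borovkova et al.\ (which depends only on the law of $X_1$ and so applies to the pair $(X_1,\hat X_1)$ directly) rather than the joint-law condition of Definition \ref{def1}, and one does need an extra step, such as conditional Jensen applied to $|g|^{1+\delta}$, to control $\|g(\hat X_1)\|_{1+\delta}$ (respectively $\|g(\hat X_1)\|_\infty$) before the stated constants come out. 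Your proposed fixes are the standard ones, though you leave them unexecuted.

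The genuine gap is in the final optimization, which you assert rather than carry out. From $B(\epsilon)=L\epsilon+2\|g(X_1)\|_{1+\delta}\,(a_l/\epsilon)^{\delta/(1+\delta)}$ the balanced choice is $\epsilon=a_l^{\delta/(1+2\delta)}$, which makes both terms of order $a_l^{\delta/(1+2\delta)}$; no choice of $\epsilon$ yields $O\bigl(a_l^{(1+\delta)/(1+2\delta)}\bigr)$. For instance, with $\delta=1$ the target $a_l^{2/3}$ would require simultaneously $\epsilon\leq Ca_l^{2/3}$ and $(a_l/\epsilon)^{1/2}\leq Ca_l^{2/3}$, i.e.\ $\epsilon\geq ca_l^{-1/3}$, which is impossible as $a_l\rightarrow0$. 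So the argument you outline proves the lemma with exponent $\frac{\delta}{1+2\delta}$ (consistent, as $\delta\rightarrow\infty$, with the $a_l^{1/2}$ of part 2), not the stated $\frac{1+\delta}{1+2\delta}$, and no bookkeeping of constants will bridge this: the hypotheses give only first-moment control of $X_1-\hat X_1$ and a $(1+\delta)$-moment for $g(X_1)$, which is precisely what the balanced rate exhausts. You need either to supply an additional ingredient that upgrades the rate or to state explicitly that you obtain $\frac{\delta}{1+2\delta}$ and that the displayed exponent appears to be a misprint; the distinction is not cosmetic, since the exponent is used quantitatively in the proof of Theorem \ref{theo8} to verify $\sum_{k}(a'_k)^{\frac{\delta}{1+\delta}}<\infty$.
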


\subsection{General Propositions for the Bootstrap of Stationary Sequences}

The proofs of Propositions \ref{theo1} and \ref{theo3} are mainly based on the methods developed in Peligrad \cite{peli} and Shao, Yu \cite{shao}. We will give full proofs for completeness.

\begin{proposition}\label{theo1} Let $\left\{X_i, i\geq 1\right\}$ be a stationary sequence of r.v'.s such that $EX_1 =\mu$ and $\Var X_1 <\infty.$ Assume that the following conditions hold
\begin{align}\label{line1}
&\Var \quad n^{1/2} (\bar{X}_n - \mu) \rightarrow \sigma^2 > 0 ,\\
\label{line2}
&n^{1/2} (\bar{X}_n - \mu) \rightarrow N (0, \sigma^2)\ \text{ in distribution},\displaybreak[0]\\
\label{line3}
&p^{1/2} (\bar{X}_{n,kp}- \mu) \rightarrow 0 \qquad \mbox{ a.s. },\displaybreak[0]\\
\label{line4}
&\frac{1}{kp} \sum^k_{i = 1}\left[ \left(\sum_{j \in B_i} (X_j - \mu)\right)^2 - E \left(\sum_{j \in B_i} (X_j - \mu)\right)^2\right] \rightarrow 0 \qquad \mbox{ a.s. },\displaybreak[0]\\
\label{line5}
&\frac{1}{kp} \sum^k_{i = 1} \left( \sum_{j \in B_i} (X_j - \mu) \right)^2 \mathds{1}_{\left\{\left|\sum_{j \in B_i} (X_j - \mu)\right|^2> \epsilon kp\right\}} \rightarrow 0 \qquad \mbox{ a.s. }
\end{align}
for any $\epsilon > 0.$ Then the following takes place as $n \rightarrow \infty $
\begin{align*}
\Var^* (\sqrt{kp} \bar{X}^*_{n,kp}) \rightarrow \sigma^2 \qquad &\mbox { a.s.}\\
 \label{line10} \sup_{x\in\R} \left| P^*\left(\sqrt{kp} (\bar{X}_{n,kp}^* - \bar{X}_{n,kp}) \leq x \right) -P \left( \sqrt{n} (\bar{X}_n - \mu) \leq x\right)\right| \rightarrow 0 \qquad &\mbox{ a.s. }
\end{align*}
\end{proposition}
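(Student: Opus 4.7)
The plan is to observe that conditionally on $(X_n)_{n\in\N}$, the bootstrap sample mean is built from $k$ i.i.d.\ random variables $W_1,\ldots,W_k$, where $W_i = \sum_{j\in B^*_i}X^*_j - p\bar X_{n,kp}$ takes each of the values $T_j = \sum_{l\in B_j}X_l - p\bar X_{n,kp}$, $j=1,\ldots,k$, with probability $1/k$. Then $\sqrt{kp}(\bar X^*_{n,kp}-\bar X_{n,kp}) = (kp)^{-1/2}\sum_{i=1}^k W_i$, and the argument reduces to applying a conditional Lindeberg CLT, plus a Berry-type uniformization via Polya's theorem.

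For the variance statement, I would rewrite $T_j = U_j - p(\bar X_{n,kp}-\mu)$ with $U_j = \sum_{l\in B_j}(X_l-\mu)$. Using $\sum_j U_j = kp(\bar X_{n,kp}-\mu)$, a direct expansion gives
\begin{equation*}
\Var^*\bigl(\sqrt{kp}\,\bar X^*_{n,kp}\bigr) \;=\; \frac{1}{kp}\sum_{j=1}^{k}T_j^2 \;=\; \frac{1}{kp}\sum_{j=1}^{k}U_j^2 - p(\bar X_{n,kp}-\mu)^2.
\end{equation*}
The last term vanishes a.s.\ by (\ref{line3}). The first term I split into the centered sum $\frac{1}{kp}\sum_j(U_j^2 - EU_j^2)$, which tends to $0$ a.s.\ by (\ref{line4}), and the deterministic $\frac{1}{p}E U_1^2 = \frac{1}{p}\Var S_p \to \sigma^2$ by (\ref{line1}). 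Combining these yields $\Var^*(\sqrt{kp}\,\bar X^*_{n,kp})\to\sigma^2$ a.s.

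For the distributional statement, the Lindeberg condition for the $W_i$ (under $P^*$) requires showing $(kp)^{-1}\sum_j T_j^2\mathds{1}_{\{|T_j|>\epsilon\sqrt{kp}\}}\to 0$ a.s. Using $|T_j|\le |U_j|+p|\bar X_{n,kp}-\mu|$, the event $\{|T_j|>\epsilon\sqrt{kp}\}$ is eventually contained in $\{|U_j|^2 > \epsilon^2 kp/4\}$ because (\ref{line3}) forces $p|\bar X_{n,kp}-\mu|\le \epsilon\sqrt{kp}/2$ for large $n$. Bounding $T_j^2\le 2U_j^2+2p^2(\bar X_{n,kp}-\mu)^2$ then gives
\begin{equation*}
\frac{1}{kp}\sum_{j=1}^{k}T_j^2\mathds{1}_{\{|T_j|>\epsilon\sqrt{kp}\}} \;\le\; \frac{2}{kp}\sum_{j=1}^{k}U_j^2\mathds{1}_{\{|U_j|^2>\epsilon^2 kp/4\}} + 2p(\bar X_{n,kp}-\mu)^2,
\end{equation*}
which tends to $0$ a.s.\ by (\ref{line5}) (with $\epsilon^2/4$) and (\ref{line3}). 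Combined with the variance convergence just proved, the conditional Lindeberg CLT yields $\sqrt{kp}(\bar X^*_{n,kp}-\bar X_{n,kp})\Rightarrow N(0,\sigma^2)$ under $P^*$ for almost every sample path.

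To finish, I would use Polya's theorem: since the limit $N(0,\sigma^2)$ has a continuous distribution function, pointwise convergence of the bootstrap c.d.f.\ upgrades to uniform convergence in $x$, a.s. By (\ref{line2}), the same Polya argument gives $\sup_x|P(\sqrt{n}(\bar X_n-\mu)\le x)-\Phi(x/\sigma)|\to 0$. A triangle inequality then delivers the claimed a.s.\ Kolmogorov convergence. The main obstacle is really just the bookkeeping that separates the Lindeberg truncation for $T_j$ into a piece controlled by (\ref{line5}) and a remainder absorbed by (\ref{line3}); everything else is a standard i.i.d.\ CLT argument applied pathwise.
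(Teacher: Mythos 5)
Your proposal is correct and follows essentially the same route as the paper's proof: the same conditional i.i.d. block representation, the same variance identity $\frac{1}{kp}\sum_j T_j^2=\frac{1}{kp}\sum_j U_j^2-p(\bar X_{n,kp}-\mu)^2$ handled via (\ref{line1}), (\ref{line3}), (\ref{line4}), and the same splitting of the Lindeberg truncation into a piece controlled by (\ref{line5}) and a remainder absorbed by (\ref{line3}). The only cosmetic difference is that you absorb the centering term by an eventual event inclusion rather than the paper's explicit indicator bound, which changes nothing of substance.
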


\begin{proof} We note that
\[
\sqrt{k p} ( \bar {X}_{n, kp}^{\star} - \bar{X}_{n, kp}) = \frac{1}{\sqrt{k}} \sum^k_{i =1} Z_{i, n}^*
\]
where $ Z_{i, n}^* = \frac{1}{\sqrt{p}} \sum_{j \in B_i} (X_j^* - \bar{X}_{n, kp})$,\ $i =1, \ldots, k$ are i.i.d. r.v.'s (conditionally on $\left(X_n\right)_{n\in\N}$). By simple calculations we have
\begin{multline*}
E^* Z_{1, n}^{* 2} = \frac{1}{kp} \sum^k_{i =1} (\sum_{j \in B_i} (X_j - \bar{X}_{n, kp}))^2= \frac{1}{kp} \sum^k_{i =1} (\sum_{j \in B_i} (X_j - \mu))^2 - p (\bar{X}_{n, kp} - \mu)^2 \\
= \frac{1}{kp} \sum^k_{i =1}\left( ( \sum_{j \in B_i} (X_j - \mu))^2 - E (\sum_{j \in B_1} (X_j - \mu))^2\right)+ \frac{1}{p} \operatorname{Var} S_p - p(\bar{X}_{n, kp} - \mu)^2.
\end{multline*}
Conditions (\ref{line1}), (\ref{line3}) and (\ref{line4}) imply that a.s. as $n \rightarrow \infty$
\begin{equation}\label{line18}
E^* Z^{*2}_{1,n} \rightarrow \sigma^2,\ \text{ and consequently } \Var^\star\left[\sqrt{kp}X^\star_{n,kp}\right]\rightarrow \sigma^2.
\end{equation}
For any $ \epsilon > 0$, we have
\begin{align*}
E^* Z_{i,n}^{* 2} \mathds{1}_{\left\{Z_{i,n}^2 > \epsilon k\right\}}=& \frac{1}{kp} \sum^k_{i =1} ( \sum_{j \in B_i} ( X_j - \bar{X}_{n, kp}))^2 \mathds{1}_{\left\{\left| \sum_{j \in B_i} (X_j - \bar{X}_{n, kp})\right|^2 > k p \epsilon\right\}}\displaybreak[0]\\
\leq& \frac{4}{kp} \sum^k_{i =1} ( \sum_{j \in B_i} ( X_j - \mu))^2 \mathds{1}_{\left\{(\sum_{j \in B_i} (X_j - \mu ))^2> \frac{\epsilon k p}{4}\right\}}\\
&+ \frac{4}{kp} \sum^k_{i =1} p^2 ( \bar{X}_{n, kp} - \mu)^2 \mathds{1}_{\left\{p ( \bar{X}_{n, kp} - \mu)^2 > \frac{\epsilon k p }{4}\right\}}\displaybreak[0]\\
\leq& \frac{4}{kp} \sum^k_{i =1} (\sum_{j \in B_i} (X_j - \mu))^2 \mathds{1}_{\left\{(\sum_{j \in B_i} ( X_j - \mu))^2 > \frac{\epsilon kp}{4}\right\}}+ 4 p (\bar{X}_{n, kp} - \mu)^2.
\end{align*}
Now lines (\ref{line3}) and (\ref{line5}) imply that
\begin{equation}\label{line19}
E^*Z_{1,n}^{*2} \mathds{1}_{\left\{(Z_{1,n}^{*2} > \epsilon k\right\}} \rightarrow 0 \mbox{ a. s. as } n \rightarrow \infty
\end{equation}
what means that $Z^*_{i,n}, i = 1,2 \ldots$ satisfies the Lindeberg condition. Thus lines (\ref{line18}) and (\ref{line19}) imply the statement of the theorem.
\end{proof}

\begin{proposition}\label{theo2}Let $ \left(X_n\right)_{n\in\mathds{N}}$ be a stationary sequence of r.v.'s. with $ E X_1 = \mu, \Var X_1 < \infty.$ Assume that conditions (\ref{line1}), (\ref{line2}), (\ref{line4}) and for each fixed $x\in\R$
\begin{equation}\label{line6}
 \frac{1}{kp} \sum^k_{k=1} \left(\mathds{1}_{\left\{\frac{1}{\sqrt{p}} \sum_{j \in B_i} (X_j - \mu) \leq x \right\}} - P \left(\frac{1}{\sqrt{p}} \sum^p_{i = 1} (X_i - \mu) \leq x\right)\right) \rightarrow 0 \qquad \mbox { a.s. }
 \end{equation}
hold. Then the statement of Proposition \ref{theo1} remains true.
\end{proposition}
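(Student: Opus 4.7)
The plan is to show that Proposition \ref{theo2} reduces to Proposition \ref{theo1} by deriving the two missing hypotheses (\ref{line3}) and (\ref{line5}) from the present assumptions. Writing $Y_{i,n} := \frac{1}{\sqrt{p}}\sum_{j\in B_i}(X_j-\mu)$, let $\hat{F}_n$ denote the random empirical CDF of $Y_{1,n},\ldots,Y_{k,n}$, and let $F_p$ be the deterministic CDF of $\sqrt{p}(\bar{X}_p - \mu)$. First I would combine (\ref{line6}), which says $\hat{F}_n(x) - F_p(x) \to 0$ a.s.\ at each fixed $x$, with the CLT (\ref{line2}), which gives $F_p(x) \to \Phi(x/\sigma)$ for every $x$. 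Applying this at every rational $x$ on a common probability-one set, and using continuity of $\Phi$, I would extend pointwise convergence on a dense set to weak convergence $\hat{F}_n \Rightarrow N(0,\sigma^2)$ almost surely.

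Next I would extract convergence of the second moment: condition (\ref{line4}) is equivalent to $\frac{1}{k}\sum_{i=1}^k Y_{i,n}^2 - \frac{1}{p}\Var(S_p) \to 0$ a.s., and (\ref{line1}) gives $\frac{1}{p}\Var(S_p) \to \sigma^2$, so $\int x^2 \, d\hat{F}_n(x) \to \sigma^2$ a.s. Combined with the weak convergence of $\hat{F}_n$ to a distribution with the same second moment, a classical result on convergence of moments yields (on a probability-one set) uniform integrability of $x \mapsto x^2$ under the family $(\hat{F}_n)$; this is the key analytical step.

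From this uniform integrability both remaining conditions follow. Since $|x| \leq 1 + x^2$, uniform integrability of $x \mapsto x$ also holds, hence $\int x \, d\hat{F}_n(x) \to \int x \, dN(0,\sigma^2) = 0$ a.s.; because this integral equals $\sqrt{p}(\bar{X}_{n,kp} - \mu)$, this is exactly (\ref{line3}). For (\ref{line5}), the expression to control is $\int x^2 \mathds{1}_{\{|x|>\sqrt{\epsilon k}\}} \, d\hat{F}_n(x)$; given $\eta>0$, pick $M(\omega,\eta)$ so that $\int x^2 \mathds{1}_{\{|x|>M\}} \, d\hat{F}_n(x) < \eta$ uniformly in $n$, and observe that for $n$ large enough $\sqrt{\epsilon k} > M$, so the expression is below $\eta$. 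This gives (\ref{line5}), and with all hypotheses of Proposition \ref{theo1} verified, its conclusion transfers directly.

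The main obstacle I anticipate is the second step — leveraging a pointwise-in-$x$ almost sure statement together with matching second-moment convergence to produce uniform integrability of the empirical measures, all on a single probability-one event shared across the intermediate claims. Once the uniform integrability is in hand, the derivations of (\ref{line3}) and (\ref{line5}) are routine and Proposition \ref{theo1} does the rest.
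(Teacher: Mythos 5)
Your proposal is correct, and it completes the proof by a genuinely different route than the paper, even though the two arguments share their first half. Both start by observing that the empirical distribution $\widetilde F_n$ of the normalized block sums $Y_{i,n}=p^{-1/2}\sum_{j\in B_i}(X_j-\mu)$ converges weakly to $N(0,\sigma^2)$ almost surely (condition (\ref{line6}) at rational $x$ plus the CLT (\ref{line2}) and continuity of the limit), and that $\int x^2\,d\widetilde F_n\rightarrow\sigma^2$ almost surely (conditions (\ref{line4}) and (\ref{line1})). At that point the paper stops and refers to Theorem 2.2 of Shao and Yu, whose completion runs through the Mallows metric $d_2$: weak convergence together with convergence of second moments is equivalent to $d_2$-convergence, and the subadditivity of $d_2$ over sums of independent variables transfers the convergence from the single-block distribution to the distribution of $\sqrt{kp}(\bar X^*_{n,kp}-\bar X_{n,kp})$. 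You instead close the argument internally: the same two facts yield, by the classical Billingsley-type lemma, uniform integrability of $x\mapsto x^2$ under $(\widetilde F_n)$ on a single probability-one event; from this you read off (\ref{line3}) as convergence of first moments (using the identity $\int x\,d\widetilde F_n=\sqrt{p}\,(\bar X_{n,kp}-\mu)$, which is correct) and (\ref{line5}) as a tail truncation at level $\sqrt{\epsilon k}\rightarrow\infty$ (the identification of the left-hand side of (\ref{line5}) with $\int x^2\mathds{1}_{\{|x|>\sqrt{\epsilon k}\}}\,d\widetilde F_n$ checks out), and then you invoke Proposition \ref{theo1}, whose Lindeberg argument does the rest. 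Your route buys self-containedness --- no appeal to the external Mallows-metric machinery, and an explicit demonstration that the hypotheses of Proposition \ref{theo2} actually imply those of Proposition \ref{theo1} --- at the cost of the uniform-integrability lemma you correctly flag as the key step; the paper's route is shorter on the page but outsources exactly that content to Shao and Yu. I see no gap in your argument, provided you intersect the countably many almost-sure events (rational $x$ in (\ref{line6}), plus (\ref{line4})) into one.
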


\begin{proof}
We define
\[
\widetilde{F}_n (x) = \frac{1}{k}\sum^k_{i =1} \mathds{1}_{\left\{\frac{1}{p} \sum^{ip}_{j =(i - 1) p + 1} (X_j^* - \mu) \leq x\right\}}.
\]
It is easy to see that the r.v.'s
\[
\left\{ \frac{1}{\sqrt{p}} \sum^{ip}_{j = (i - 1) p + 1} (X_j^* - \mu)\right\}, i = 1, \ldots , k
\]
are i.i.d. with distribution function $\widetilde{F}_n (x)$. We denote by $\widetilde{F}_n^{(m)}$ the distribution function of
\[
(kp)^{1/2} ( \bar{X}^*_{n,kp} - \bar{X}_{n,kp}) = \frac{1}{\sqrt{k}} \sum^k_{i =1} \left(\frac{1}{\sqrt{p}} \sum^{ip}_{j = (i - 1) p +1} ((X_j^* - \mu) -
E^* (X_j^* - \mu))\right).
\] 
Note that
\begin{align*}
\int x^2 d \widetilde{F}_n (x)
=& \frac{1}{kp} \sum^k_{i =1} \left((\sum_{j \in B_i} (X_j - \mu))^2 - E ( \sum^p_{j =1} (X_j - \mu))^2+ E ( \sum^p_{j=1} (X_j - \mu))^2\right)\\
=&\frac{1}{kp} \sum^k_{i =1} \left((\sum_{j \in B_i} (X_j - \mu))^2 - E ( \sum^p_{j =1} (X_j - \mu))^2 \right)+ \frac{1}{p} \Var S_p.
\end{align*}

From the lines (\ref{line1}) and (\ref{line4}) we have
\[
\int x^2 d \widetilde{F}_n (x) \rightarrow \sigma^{2} \qquad \mbox{ a.s. as } n \rightarrow \infty.
\]
Now conditions (\ref{line2}) and (\ref{line6}) imply
\[
\widetilde{F}_n (x) \rightarrow N (0, \sigma^2) \mbox { a.s. }
\]
The rest of the proof is the same as in the proof of Theorem 2.2 of Shao and Yu \cite{shao}.
\end{proof}

The following proposition is analogue of Proposition 3.1 of Peligrad \cite{peli} (In this proposition we assume that $\left(x_n\right)_{n\in\N}$ is a fixed realization of $\left(X_n\right)_{n\in\mathds{N}})$.

\begin{proposition} Let $\left(x_n\right)_{n\in\N}$ be a bounded sequence of real numbers. For each $n,$ let $T_{n 1}, T_{n 2}, \ldots T_{n k}$ be independent r.v.'s uniformly distributed on $\{1, 2, \ldots, k\}$ . Assume that condition (\ref{line7}) and
\begin{equation*}
V_n = \frac{p}{k} \sum^k_{i =1} (\bar{x}_{p i} - \bar{x}_{n, kp})^2 \rightarrow \sigma^2>0 
\end{equation*}
hold, where $\bar{x}_{pi} = \frac{1}{p} \sum_{j \in B_i} x_j$, $\bar{x}_{n,pk}=\frac{1}{kp}\sum_{i=1}^{kp}x_{i}$.
Then 
\[
\sqrt{kp} (\bar{X}^*_{n,kp} - \bar{x}_{n,kp}) = \sqrt{k p} (\frac{1}{k} \sum^k_{j = 1} \sum^k_{i =1} \mathds{1}_{\left\{T_{n j} = i\right\}} \bar{x}_{pi} - \bar{x}_{n, kp}) \rightarrow N (0, \sigma^2)
\]
in distribution.
\end{proposition}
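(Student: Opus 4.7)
The plan is to exhibit $\sqrt{kp}(\bar{X}^\star_{n,kp}-\bar{x}_{n,kp})$ as a normalized sum of i.i.d.\ centered real random variables and then invoke the classical Lindeberg--Feller central limit theorem. This is the natural bootstrap-side analogue of a standard CLT proof, but made easy by the fact that $(x_n)$ is a fixed deterministic sequence and only the resampling indices $T_{nj}$ carry randomness.

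First I would set $Y_{nj}:=\sqrt{p}\,(\bar{x}_{pT_{nj}}-\bar{x}_{n,kp})$ for $j=1,\ldots,k$. Since $T_{n1},\ldots,T_{nk}$ are i.i.d.\ uniform on $\{1,\ldots,k\}$ and $(x_n)$ is treated as fixed, $Y_{n1},\ldots,Y_{nk}$ are i.i.d., and a direct rewriting of the sum yields
\begin{equation*}
\sqrt{kp}\,(\bar{X}^\star_{n,kp}-\bar{x}_{n,kp})=\frac{1}{\sqrt{k}}\sum_{j=1}^{k}Y_{nj}.
\end{equation*}
The identity $\frac{1}{k}\sum_{i=1}^{k}\bar{x}_{pi}=\bar{x}_{n,kp}$ gives $EY_{n1}=0$, and the definition of $V_n$ gives $\Var Y_{n1}=V_n\to\sigma^{2}$ by the standing assumption. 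These two facts take care of the centering and the limiting variance.

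The remaining step is verifying the Lindeberg condition: for every $\epsilon>0$,
\begin{equation*}
E\bigl[Y_{n1}^{2}\,\mathds{1}_{\{Y_{n1}^{2}>\epsilon^{2}k\}}\bigr]=\frac{p}{k}\sum_{i=1}^{k}(\bar{x}_{pi}-\bar{x}_{n,kp})^{2}\,\mathds{1}_{\{p(\bar{x}_{pi}-\bar{x}_{n,kp})^{2}>\epsilon^{2}k\}}\longrightarrow 0.
\end{equation*}
This is exactly the deterministic Lindeberg-type hypothesis (\ref{line7}) (the deterministic counterpart of the block condition (\ref{line5})). With the Lindeberg condition in hand, Lindeberg--Feller immediately gives $\frac{1}{\sqrt{k}}\sum_{j=1}^{k}Y_{nj}\to N(0,\sigma^{2})$ in distribution, which is the claim.

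There is no genuine obstacle within the proposition itself: it has been formulated precisely so that the reduction to an i.i.d.\ sum and the Lindeberg condition both come for free from the assumptions. The real difficulty will be pushed to the subsequent applications of this proposition, where (\ref{line1}), (\ref{line4}) and (\ref{line7}) must be checked almost surely for the actual near epoch dependent or strongly mixing process $(X_n)$; there the moment and maximal inequalities collected in Section~3 (in particular Lemmas \ref{lem2}, \ref{lem2b}, \ref{lem4}, and \ref{lem8}) will carry the load.
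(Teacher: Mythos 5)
Your overall strategy---rewrite $\sqrt{kp}(\bar{X}^\star_{n,kp}-\bar{x}_{n,kp})$ as $\frac{1}{\sqrt{k}}\sum_{j=1}^k Y_{nj}$ with $Y_{nj}=\sqrt{p}(\bar{x}_{pT_{nj}}-\bar{x}_{n,kp})$ i.i.d., centered, with variance $V_n\to\sigma^2$, and then apply Lindeberg--Feller---is exactly the paper's approach. But your treatment of the Lindeberg condition contains a genuine error: you assert that
\begin{equation*}
\frac{p}{k}\sum_{i=1}^{k}(\bar{x}_{pi}-\bar{x}_{n,kp})^{2}\,\mathds{1}_{\{p(\bar{x}_{pi}-\bar{x}_{n,kp})^{2}>\epsilon^{2}k\}}\longrightarrow 0
\end{equation*}
``is exactly'' hypothesis (\ref{line7}). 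It is not: (\ref{line7}) is simply $p^2/n\rightarrow 0$, a condition on the block length, not a Lindeberg-type condition. As written, your proof assumes the one nontrivial step rather than proving it, and---tellingly---it never uses the hypothesis that $(x_n)$ is bounded, which is precisely what makes that step work.

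The repair is short and is what the paper does. Boundedness, say $|x_n|\leq M$, gives $|\bar{x}_{pi}-\bar{x}_{n,kp}|\leq 2M$ uniformly in $i$, hence $Y_{nj}^2\leq 4M^2p$ deterministically; equivalently, in the paper's notation, $E\max_{1\leq j\leq k}U_{nj}^2=O(p/k)=O(p^2/n)$. Since $k\sim n/p$, condition (\ref{line7}) gives $p/k\rightarrow 0$, so for every $\epsilon>0$ the event $\{Y_{n1}^2>\epsilon^2 k\}$ is empty for all large $n$ and the Lindeberg sum is eventually identically zero. With that one line inserted (and the boundedness hypothesis actually invoked), your argument coincides with the paper's proof.
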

\begin{proof} We have
\[
\Var \bar{X}^*_{n,kp} = \frac{1}{k^2} \sum^k_{i =1} (\bar{x}_{pi}-\bar{x}_{n,kp})^2
\]
and by line (\ref{line2}) obtain
\[
\Var (\sqrt{kp} \bar{X}^*_{n,kp}) = \frac{p}{k} \sum^k_{i =1} (\bar{x}_{pi} - \bar{x}_{n,kp})^2 \rightarrow \sigma^2.
\]
Note that
\begin{multline*}
 \sqrt{kp} (\bar{X}^*_{n,kp} - \bar{x}_{n,kp}) = \sqrt{kp} ( \frac{1}{k} \sum^k_{j =1} \sum^k_{i =1} \mathds{1}_{\left\{ T_{nj} = i\right\}} \bar{x}_{pi} - \bar{x}_{n,kp})\\
= \sqrt{kp} (\frac{1}{k} \sum^k_{j =1} \sum^k_{i = 1} \mathds{1}_{\left\{ T_{nj} = i\right\}} (\bar{x}_{pi} - \bar{x}_{n,kp}))
= \sum^k_{j=i} U_{nj},
\end{multline*}
where
\[
 U_{nj} = \frac{\sqrt{p}}{\sqrt{k}} \sum^k_{i=1} \mathds{1}_{\left\{T_{nj} = i\right\}} ( \bar{x}_{pi} - \bar{x}_{n,kp}).
\]
In our case Lindeberg condition holds if
\[
E \max_{1 \leq j \leq k} U^2_{nj} \rightarrow 0 \qquad \mbox{ as } n \rightarrow \infty.
\]
Taking into account that r. v.'s are bounded we have
\[
\left| U_{nj} \right| \leq \sqrt{\frac{p}{k}} \max_{i \leq i \leq k} \left| \bar{x}_{pi} - \bar{x}_{n, kp} \right| = O (\sqrt{\frac{p}{k}}).
\]
and
\[
\left| U_{nj} \right|^2 = O (\frac{p}{k}) = O (\frac{p^2}{n})
\]
Now the condition (\ref{line7}) implies the statement of the proposition.
\end{proof}

\begin{proposition}\label{theo3}Let $ \left(X_n\right)_{n\in\mathds{N}}$ be a stationary sequence of bounded almost surely r.v.'s with $EX_1 = \mu.$ Assume that conditions (\ref{line1}), (\ref{line3}) (\ref{line4}) and following conditions hold
 \begin{equation}\label{line7}
 \frac{p^2}{n}\rightarrow 0 \qquad \qquad \mbox{ as } n \rightarrow \infty.
 \end{equation}
 Then line (\ref{line9}) remains true and
 \begin{equation}
 \sqrt{kp}(\bar{X}^*_{n,kp} - \bar{X}_{n,kp}) \rightarrow N (0, \sigma^2) \text{ in distribution}.
 \end{equation}
\end{proposition}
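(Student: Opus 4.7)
\noindent\emph{Proof proposal.} The plan is to reduce Proposition~\ref{theo3} to the preceding (unnumbered) proposition by verifying its hypotheses pathwise on an almost-sure event.

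For (\ref{line9}) I would repeat the variance calculation in the proof of Proposition~\ref{theo1}. Writing $Z_{i,n}^\star = \frac{1}{\sqrt p}\sum_{j\in B_i}(X_j^\star - \bar X_{n,kp})$, these are i.i.d.\ under $P^\star$, and
\[
\Var^\star\bigl(\sqrt{kp}\,\bar X^\star_{n,kp}\bigr) = E^\star Z_{1,n}^{\star 2} = \frac{1}{kp}\sum_{i=1}^k\Bigl(\sum_{j\in B_i}(X_j-\mu)\Bigr)^2 - p\bigl(\bar X_{n,kp}-\mu\bigr)^2.
\]
The first term tends a.s.\ to $\sigma^2$ by (\ref{line1}) and (\ref{line4}); the second vanishes a.s.\ by (\ref{line3}). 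This proves (\ref{line9}).

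For the distributional convergence I would identify the above quantity with the $V_n$ of the preceding proposition. Since $\bar x_{pi} - \bar x_{n,kp} = p^{-1}\sum_{j\in B_i}(x_j - \bar x_{n,kp})$, a direct substitution yields $V_n = \frac{1}{kp}\sum_{i=1}^k\bigl(\sum_{j\in B_i}(X_j - \bar X_{n,kp})\bigr)^2 = E^\star Z_{1,n}^{\star 2}$, so $V_n \to \sigma^2 > 0$ almost surely. Fix any $\omega$ in the full-measure event on which $V_n(\omega)\to\sigma^2$ and on which $\sup_n |X_n(\omega)| < \infty$ (the latter holds a.s.\ by stationarity and the a.s.\ boundedness of $X_1$, via a countable union over $n$). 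On this event the deterministic sequence $x_n = X_n(\omega)$ is bounded, condition~(\ref{line7}) gives $p^2/n\to 0$, and $V_n(\omega)\to\sigma^2>0$, so all hypotheses of the preceding proposition are satisfied. Its conclusion yields $\sqrt{kp}\,(\bar X^\star_{n,kp} - \bar X_{n,kp})\to N(0,\sigma^2)$ in distribution under $P^\star$ for this realization, which is the desired a.s.\ statement.

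The proof is essentially bookkeeping and I do not anticipate a genuine obstacle. The substantive Lindeberg step for bounded i.i.d.\ block weights, which is precisely where the hypothesis $p^2/n\to 0$ is consumed (via $|U_{nj}|^2 = O(p^2/n)$), has already been carried out inside the preceding proposition, so here it suffices to verify the inputs and invoke that result pathwise.
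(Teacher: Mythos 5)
Your proposal is correct and follows essentially the same route as the paper: both reduce the claim to the preceding unnumbered proposition by decomposing $V_n=\frac1k\sum_i\frac{S_{pi}^2-ES_{pi}^2}{p}+\frac{\Var S_p}{p}-p\bar X_{n,kp}^2$ and invoking (\ref{line1}), (\ref{line3}), (\ref{line4}) to get $V_n\to\sigma^2$ a.s. Your additional remarks on the pathwise application and the a.s.\ uniform boundedness of the realization are just slightly more explicit bookkeeping than the paper provides.
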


\begin{proof}
The proof is based on the previous proposition, so we have to show that
\[
V_n = \frac{p}{k} \sum^k_{i =1} (\bar {X}_{pi} - \bar{X}_{n,kp})^2 \rightarrow \sigma^2 \mbox{ a.s. as } n \rightarrow \infty.
\]
W.l.o.g. assume that $\mu = EX_1 = 0$. Set $ S_{pi} = \sum_{j \in B_i} X_j.$ Then
\begin{align*}
 V_n &= \frac{p}{k} \sum^k_ {i =1} \bar{X}^2_{pi} - p \bar{X}^2_{n,kp} = \frac{p}{k} \sum^k_{i =1} \frac{S_{pi}^2}{p^2} - p \bar{X}^2_{n,kp}\\
&=\frac{1}{k} \sum^k_{i =1} \frac{S^2_{pi} - E S_{pi}^2}{p} + \frac{\Var S_p}{p} - p \bar{X}^2_{n,kp}.
\end{align*}

Conditions (\ref{line1}), (\ref{line3}) and (\ref{line4}) imply
\[
V_n \rightarrow \sigma^2 \mbox{ a.s. as } n \rightarrow \infty.
\]
This completes the proof of Proposition \ref{theo3}.
\end{proof}

\section{Proofs of the theorems}

\subsection{Bootstrap for the sample mean under dependence conditions}

\begin{proof}{Proof of Theorem \ref{theo5}} Under condition 1 (strong mixing), this theorem can be found in the book of Lahiri \cite{lahi}, Theorems 3.1 and 3.2. For the second condition (near epoch dependence), we will first show that the central limit theorem holds. By our assumption, we have that
\begin{multline*}
E\left|X_0-E[X_0|Z_{-l},\ldots,Z_{l}]\right|^{\frac{2+\delta}{1+\delta}}\\
\shoveleft= E\left[\left|X_0-E[X_0|Z_{-l},\ldots,Z_{l}]\right|^{\frac{2+\delta}{1+\delta}}\mathds{1}_{\{|X_0-E[X_0|Z_{-l},\ldots,Z_{l}]|\leq a_l^{-\frac{1}{1+\delta}}\}}\right]\\
+E\left[\left|X_0-E[X_0|Z_{-l},\ldots,Z_{l}]\right|^{\frac{2+\delta}{1+\delta}}\mathds{1}_{\{|X_0-E[X_0|Z_{-l},\ldots,Z_{l}]|> a_l^{-\frac{1}{1+\delta}}\}}\right]\\
\shoveleft\leq a_l^{-\frac{1}{(1+\delta)^2}}E\left|X_0-E[X_0|Z_{-l},\ldots,Z_{l}]\right|\\
+a_l^{\frac{\delta(2+\delta)}{(1+\delta)^2}}E\left|X_0-E[X_0|Z_{-l},\ldots,Z_{l}]\right|^{2+\delta}\leq C a_l^{\frac{\delta(2+\delta)}{(1+\delta)^2}}.
\end{multline*}
So we have that
\begin{equation*}
\sum_{k=1}^\infty \left(E\left|X_0-E[X_0|Z_{-l},\ldots,Z_{l}]\right|^{\frac{2+\delta}{1+\delta}}\right)^{\frac{1+\delta}{2+\delta}}\leq C\sum_{k=1}^\infty a_l^{\frac{\delta}{1+\delta}}<\infty,
\end{equation*}
and by Lemma \ref{lem1} the partial sum is asymptotically normal. We write $X_n=X_n(\nu)+X_n(\bar{\nu})$ with
\begin{equation*}
X_n(\nu)=E[X_n|Z_{n-\nu},\ldots,Z_{n+\nu}]
\end{equation*}
and $X_n(\bar{\nu})=X_n-X_n(\nu)$. Note that the sequence $(X_n(\nu))_{n\in\N}$ is strongly mixing with mixing coefficients $\alpha_X(t)=\alpha(i-2\nu)$. Then for the bootstrap version of the sample mean, we have that
\begin{multline*}
\sqrt{kp}\left(\bar{X}^\star_{n,kp}-\bar{X}_{n,kp}\right)\\
=\sqrt{kp}\left(\bar{X}^\star_{n,kp}(\nu)-\bar{X}_{n,kp}(\nu)\right)+\sqrt{kp}\left(\bar{X}^\star_{n,kp}(\bar{\nu})-\bar{X}_{n,kp}(\bar{\nu})\right)=I_n+II_n
\end{multline*}
where $I_n$ is constructed from the sampe $X_1(\nu),\ldots,X_n(\nu)$ and $II_2$ from the sample $X_1(\bar{\nu}),\ldots,X_n(\bar{\nu})$. The first part of this theorem implies that for any fixed $\nu\in\N$, the bootstap is valid for $\bar{X}_{n,kp}(\nu)$, meaning that
\begin{align}
\label{line25b}\Var^* (\sqrt{kp} \bar{X}^*_{n,kp}(\nu)) &\rightarrow \sigma^2(\nu)\\
\label{line26b} \sup_{x\in\R} \left| P^*\left(\sqrt{kp} (\bar{X}_{n,kp}^*(\nu) - \bar{X}_{n,kp})(\nu) \leq x \right) -\Phi(\frac{x}{\sqrt{\sigma^2(\nu)}})\right| &\rightarrow 0
\end{align}
in probability as $n\rightarrow\infty$ with $\sigma^2(\nu)=\lim_{n\rightarrow} \Var[\frac{1}{\sqrt{n}}\sum_{i=1}^nX_i(\nu)]$. Now we will estimate the variance of $II_n$
\begin{multline*}
\Var^\star[II_n]=\frac{1}{kp}\sum_{i=1}^k\left((\sum_{j^\in B_i}(X_i(\bar{\nu})-\mu))^2-E(\sum_{j^\in B_1}(X_i(\bar{\nu})-\mu))^2\right)\\
+\frac{1}{p}\Var[X_1(\bar{\nu})+\ldots+X_p(\bar{\nu})]-p\bar{X}^2_{n,kp}(\bar{\nu})
\end{multline*}
where $\mu=\mu(\nu)=EX_1(\bar{\nu})$. Observe that $E|X_i(\bar{\nu})|^{2+\delta}\leq E|X_i|^{2+\delta}<\infty$, so Lemma \ref{lem2b} implies that
\begin{equation*}
p\left(\bar{X}_{n,kp}(\bar{\nu})-\mu\right)^2\rightarrow0
\end{equation*}
a.s. as $n\rightarrow\infty$. For any $\epsilon>0$´, we have that by the Markov inequality
\begin{align*}
&P\left(\left|\frac{1}{kp}\sum_{i=1}^k\left((\sum_{j^\in B_i}(X_i(\bar{\nu})-\mu))^2-E(\sum_{j^\in B_1}(X_i(\bar{\nu})-\mu))^2\right)\right|>\epsilon\right)\\
\leq&\frac{1}{\epsilon kp}E\left|\frac{1}{kp}\sum_{i=1}^k\left((\sum_{j^\in B_i}(X_i(\bar{\nu})-\mu))^2-E(\sum_{j^\in B_1}(X_i(\bar{\nu})-\mu))^2\right)\right|\\
\leq&\frac{2}{\epsilon p}E\left(\sum_{j^\in B_i}(X_i(\bar{\nu})-\mu)\right)^2=\frac{2}{\epsilon p}\left(p\Var X_1(\bar{\nu})+2\sum_{j=2}^p(p-j)\Cov(X_1(\bar{\nu}),X_j(\bar{\nu}))\right)\\
&\leq \frac{2}{\epsilon}\left((2N+1)\Var(X_1(\bar{\nu}))+2C\sum_{j=N}^p\left(\|X_0-E[X_0|Z_{-\lfloor \frac{l}{l}\rfloor},\ldots,Z_{\lfloor \frac{j}{3}\rfloor}]\|_{\frac{2+\delta}{1+\delta}}+\alpha^{\frac{\delta}{2+\delta}}(\frac{j}{3})\right)\right)=:A
\end{align*}
where $1\leq N\leq p$. Now the conditions of the theorem imply that we can choose a fixed $N\in\N$ and $\nu\in\R$, such that
\begin{align*}
A&\leq\epsilon\\
\sup_{x\in\R}\|\Phi(\frac{x}{\sigma(\nu)})-\Phi(\frac{x}{\sigma})\|&\leq\epsilon\\
\frac{1}{p}\Var[X_1(\bar{\nu})+\ldots+X_p(\bar{\nu})]&\leq\epsilon.
\end{align*}
We note that for this $\nu$ the lines (\ref{line25b}) and (\ref{line26b}) hold, which together with these three inequalities imply the statement of the theorem.
\end{proof}

\begin{proof}[Proof of Theorem \ref{theo6}] The proof under the first assumption (strong mixing) is based on Proposition \ref{theo2}. Lemma \ref{lem1} implies that conditions (\ref{line1}) and (\ref{line2}) of Theorem \ref{theo2} are satisfied. It remains to prove lines (\ref{line4}) and (\ref{line6}). W.l.g. we can assume that $\mu = 0$ and we will prove
\[
\frac{1}{kp} \sum^k_{i =1} \left( ( \sum^p_{j =1} X_{(i - 1)p + j})^2 - E ( \sum^p_{i =1} X_i)^2 \right) \rightarrow 0 \mbox{ a.s. as } n \rightarrow \infty.
\]
By the Borel-Cantelli lemma, it suffices to show that for any $\epsilon>0$
\[
\sum^\infty_{l =1} P \left(\max_{2^l < n \leq 2^{l+1}}\left| \frac {1}{kp} \sum^k_{i=1} \left\{( \sum^p_{j =1} X_{(i -1)p + j} )^2 - E S_p^2 \right\}\right| > \epsilon \right)< \infty.
\]
Taking into account that $kp \backsim n$ and lines (\ref{line12}), (\ref{line13}) we have
\begin{align*}
I_l &:= P\left(\max_{2^l < n \leq 2^{l+1}} \left| \sum_{i =1}^{k(n)} \left((\sum^{p(2^l)}_{j =1} X_{(i - 1)p+ j})^2 - ES_p^2\right) \right| > \epsilon k (2^l) p (2^l) \right)\displaybreak[0]\\
&\leq P \left( \max_{n \leq 2^{l +1}} \left| \sum^{k (n)}_{i =1} \left(( \sum^{p (2^l)}_{j =1} X_{(i - 1)p+ j})^2 - ES^2_{p(2^l)} \right)\right| > \epsilon C 2^l\right)\displaybreak[0]\\
&\leq P \left(\max_{m \leq k (2^{l + 1})} \left| \sum^m_{i=1} \left( ( \sum^{p(2^l)}_{j =1} X_{(i-1)p+ j})^2 - ES^2_{p(2^l)}\right) \right| > \epsilon C 2^l \right).
\end{align*}
From Lemma \ref{lem3} we have for some $s >1$ that
\begin{equation}\label{line21}
(E \left| ( \sum^{p(2^l)}_{j =1} X_{(i - 1)p + j} )^2 - ES^2_{p(2^l)} \right|^s) ^{1/s} \leq C ( E \left| S_{p (2^l)} \right|^{2 s})^{1/s} \leq C p (2^l).
\end{equation}
Now using Lemma \ref{lem2} and taking into account line (\ref{line21}), we obtain
\begin{equation*}
I_l \leq C k (2^{l + 1}) ( \frac{p (2^l)}{2^l})^{\frac{s(r+1)}{s + r}} \log^r (\frac{2^l}{p (2^l)}) \leq C (\frac{p (2^l)}{2^l})^{\frac{(s-1)r}{s+r}} \log^r (\frac{2^l}{p (2^l)}).
\end{equation*}
From the condition (\ref{line12}), it follows that
\[
\sum^\infty_{l =1} P \left(\max_{2^l < n \leq 2^{l+1}}\left| \frac {1}{kp} \sum^k_{i=1} \left(( \sum^p_{j =1} X_{(i -1)p+ j} )^2 - E S_p^2 \right)\right| > \epsilon \right)\leq\sum^\infty_{l =1} I_l < \infty.
\]
It remains to prove (\ref{line6}), i.e.
\[
\frac{1}{kp} \sum^k_{i =1} \left(\mathds{1}_{\left\{\frac{1}{\sqrt{p}} \sum^p_{j =1} X_{(i -1)p+j} \leq x\right\}} - P \left(\frac{1}{\sqrt{p}} \sum^p_{i =1} X_i \leq x \right)\right)\ \rightarrow 0 \mbox { a.s. }
\]
Because of the Borel-Cantelli lemma, it suffices to show that
\begin{multline*}
\sum^\infty_{l = 1} P \left( \max_{2^l< n \leq 2^{l + 1}} \left | \sum^k_{i = 1} \left( \mathds{1}_{\left\{ \frac{1}{\sqrt{p}} \sum^p_{j = 1} X_{(i - 1)p + j} \leq x\right\}} \right.\right.\right.\\
\left.\left.\left. - P \left( \frac{1}{\sqrt{p}} \sum^p_{i = 1} X_i \leq x \right)\right) \right| > \epsilon k (2^l) p (2^l) \right) < \infty.
\end{multline*}
Using Lemma \ref{lem4} we conclude
\begin{align*}
II_l :=& P \left(\max_{2^l < n \leq 2^{l + 1}} \Bigg| \sum^{k\left(n\right)}_{i =1} \left(\mathds{1}_{\left\{\frac{1}{\sqrt{p (2^l)}} \sum^{p(2^l)}_{j =1} X_{(i - 1)p + j} \leq x\right\}}\right.\right.\\
 &\left.\left.- P( \frac{1}{\sqrt{p (2^l)}} \sum^{p(2^l)}_{i=1} X_i \leq x )\right) \Bigg| > \epsilon k (2^l) p (2^l) \right)\displaybreak[0]\\
 \leq& P \left(\max_{m \leq k(2^{l+1})} \Bigg| \sum^m_{i = 1}\left(\mathds{1}_{\left\{ \frac{1}{\sqrt{p (2^l)}} \sum^{p (2^l)}_{j =1} X_{(i - 1) p + j} \leq x\right\}}\right.\right.\\
 &-\left.\left. P (\frac{1}{\sqrt{p( (2^l)}} \sum^{p (2^l)}_{i =1} X_i \leq x)\right)\Bigg| > \epsilon k (2^l) p (2^l)\right)\\
 \leq& \frac {C \left(k( 2^{l + 1} )+ k (2^{l + 1}) \sum^{k(2^{l + 1})}_{i =1} \bar{\alpha} (i)\right)}{\epsilon^2 k^2 (2^l) p^2 (2^l)}.
\end{align*}
where $\bar{\alpha} (i) = \alpha \left((i - 1)p(2^l)+1\right)$. As $\sum_{i=1}^{\infty}\alpha\left(i\right)<\infty$:
\[
II_l \leq \frac{C k(2^{l+1})}{\epsilon^2 k^2 (2^l) p^2 (2^l)}.
\]
From condition (\ref{line12}) we get that
\[
\sum^\infty_{l = 1} II_l < \infty,
\]
so we have completed the proof under strong mixing. For the second assumption (near epoch dependence), we will check the assumptions of Proposition \ref{theo1}. Lemma \ref{lem5} implies the conditions (\ref{line1}) and (\ref{line2}). W.l.o.g. we will assume that $EX_1 = \mu = 0$. First we will prove line (\ref{line3}). Note that by stationarity and Lemma \ref{lem7} we have for any $ a \geq 1$ and some $ C > 0$
\[
E \left| \sum^a_{i =1} S_{p,i}\right| ^2 \leq C a p.
\]
Theorem $A$ of Serfling \cite{serf} implies that
\begin{equation}\label{line24}
E \left[\max_{1 \leq a \leq m} \left| \sum^a_{i =1} S_{p,i} \right|^2\right] \leq C m (\log 2 m)^2 p.
\end{equation}
In order to prove line (\ref{line3}) it suffices to show that for any $ \epsilon > 0$

\begin{equation}\label{line25}
\sum^\infty_{l = 1} P( \max_{2^l < n \leq 2^{l + 1}} \left| p^{1/2} \bar{X}_{n, kp} \right| > \epsilon ) < \infty.
\end{equation}
Keep in mind that $p=p(n)$ does not change for $2^l < n \leq 2^{l + 1}$ by condition (\ref{line13}). By Chebyshev inequality and (\ref{line24}), it follows that
\begin{align*}
&P ( \max_{2^l < n \leq 2^{l + 1}} \left| p^{1/2} \bar{X}_{n, kp} \right| > \epsilon )\leq P \left(\max_{2^l < n \leq 2^{l +1}} \left| \sum^k_{i= 1} S_{p,i} \right| > k (2^l) p^{1/2} (2^l) \epsilon\right)\displaybreak[0]\\
\leq & \frac {C \epsilon ( \max_{1 \leq j \leq k (2^{l + 1})} \left| \sum^k_{i = 1} S_{p,i} \right|^2)}{k^2 (2^l) p (2^l) \epsilon} \leq \frac { C k (2^{l + 1}) ( \log (2 \cdot k (2^{l + 1})))^2 p (2^l)}{k^2 (2^l) p (2^l) \epsilon}\\
\leq& \frac { C ( \log (2 \cdot k (2^{l + 1})))^2}{k (2^l) \epsilon}.
\end{align*}
The latter implies line (\ref{line25}) and hence (\ref{line3}) is proved, so we can proceed verifying line (\ref{line4}). First we will prove the existence of the constant $ C > 0$ such that
\begin{equation*}
E \left| \sum^m_{i =1} (S_{p,i}^2 - E S^2_p)\right|^2 \leq C m p^2
\end{equation*}
for $ m \geq 1$. Stationarity and Lemmas \ref{lem6}, \ref{lem7} and \ref{lem8} imply
\begin{align*}
 &E \left| \sum^m_{i =1} ( S_{p,i}^2 - ES_p^2)\right|^2 \leq 4E \left| \sum^m_{\substack{i =1\\i \text{ odd}}} ( S_{p,i}^2 - ES_p^2)\right|^2\\
 =& 4m E \left( S^2_p - E S^2_p\right)^2 + 8 \sum^{ m -1}_{ i = 3} ( m - i + 1) E (S^2_p - E S^2_p) \cdot (S_{p,i}^2 - E S^2_p)\displaybreak[0]\\
\leq & C m p^2 + Cm \sum^{m - 1}_{i = 3} \sum_{\substack{i_1, i_2 \in B_1\\j_1, j_2 \in B_2}} \left| E X_{i_1} X_{i_2} X_{j_1} X_{j_2} - E X_{i_1} X_{i_2} E X_{j_1} X_{j_2} \right| \\
\leq & C m p^2 + Cm \sum^{m - 1}_{i = 2} p^4 \left((a_{[\frac{(i - 2) p}{3}]})^{\frac{\delta}{3 + \delta}} + ( \beta ([\frac{( i - 2) p}{3}]))^{\frac{\delta}{4 + \delta}}\right)\\
\leq & C m p^2 + C m p^2 \sum^\infty_{k = 1} k^2 \left((a_{[\frac{k}{3}]})^{\frac{\delta}{3 + \delta}} + ( \beta ([\frac{k}{3}]))^{\frac{\delta}{4 + \delta}}\right)\leq C m p^2.
\end{align*}
Now again using Theorem $A$ of Serfling \cite{serf}, we obtain
\begin{equation}\label{line27}
E \left[\max_{ 1 \leq a \leq m} \left| \sum^a_{i =1} ( S^2_{p,i} - E S^2_{p,i})\right|^2\right] \leq C m (\log 2 m)^2 p^2.
\end{equation}
If we can prove
\begin{equation}\label{line28}
\sum^\infty_{l = 1} P ( \max_{2^l < n \leq 2^{l + 1}} \left| \sum^k_{i = 1} (S_{p,i}^2 - E S^2_p)\right|
 > \epsilon k p ) < \infty,
\end{equation}
line (\ref{line4}) follows by the Borel-Cantelli lemma. Using Chebyshev inequality and (\ref{line27}) we get
\begin{align*}
 &P \left( \max_{2^l < n \leq 2^{l + 1}} \left| \sum^k_{i = 1} ( S^2_{p,i} - E S^2_p) \right| > \epsilon k p \right)\\
\leq& P \left(\max_{1 \leq m \leq k(2^{l + 1})}\left| \sum^m_{i =1} ( S^2_{p,i} - E S^2_p) \right| > \epsilon k (2^l) p (2^l)\right)\\
\leq& \frac{C k (2^{l + 1}) (\log (2 k ( 2^{l + 1})))^2 \cdot p^2 (2^l)}{\epsilon k^2 (2^l) p^2 (2^l)} 
\leq \frac{C (\log ( 2 k (2^{l + 1})))^2} {\epsilon k (2^l)}.
\end{align*}
The latter implies line (\ref{line28}) and hence (\ref{line4}). It remains to check line (\ref{line5}). In order to do that it suffices to show that for any $\epsilon > 0$ and $\epsilon_1 > 0$ 
\begin{equation}\label{line23}
\sum^\infty_{l = 1} P\left( \max_{2^l < n \leq 2^{l+1}} \left| \frac{1}{kp} \sum^k_{i = 1} S^2_{p,i} \mathds{1}_{\left\{ \left| S_{p,i} \right|^2 > \epsilon k p\right\}} \right| > \epsilon_1\right) < \infty
\end{equation}
where $ S_{p,i} = \sum_{j \in B_i} X_j.$ Using Markov, H\"older, Chebyshev inequalities, line (\ref{line13}), and Lemma \ref{lem6} we obtain 
\begin{align*}
& P ( \max_{2^l < n \leq 2^{l + 1}} \left| \frac{1}{k p} \sum^k_{i = 1} S^2_{p,i} \mathds{1}_{\left\{\left| S_{p,i} \right|^2 > \epsilon k p \right\}} \right| > \epsilon_1) \\
\leq & P ( \max_{2^l < n \leq 2^{l + 1}} \left| \sum^k_{i = 1} S^2_{p,i} \mathds{1}_{\left\{\left| S_{p,i} \right|^2 > \epsilon k (2^l) p (2^l)\right\}} \right| > \epsilon_1 k (2^l) p (2^l))\displaybreak[0]\\
\leq &\frac{E (\max_{2^l < n \leq 2^{l + 1}} \left| \sum^k_{i = 1} S^2_{p,i} \mathds{1}_{\left\{\left| S_{p,i} \right|^2 > \epsilon k (2^l) p (2^l) \right\}}\right| } {\epsilon_1 k (2^l) p (2^l)}\displaybreak[0]\\
\leq & \frac{\sum^{k ( 2^{l +1})}_{i =1} E S^2_{p,i} \mathds{1}_{\left\{\left| S_{p,i} \right|^2 > \epsilon k (2^l) p (2^l)\right\}}}{\epsilon_1 k (2^l) p (2^l)}\leq \frac{k (2^{l + 1}) E S_p^2 \mathds{1}_{\left\{(\left| S_p \right|^2 > \epsilon k (2^l) p ( 2^l)\right\}}}{\epsilon_1 k (2^l) p (2^l)}\displaybreak[0]\\
\leq & \frac{C (E S^4_p)^{1/2} ( P ( \left| S_p \right|^2 > \epsilon k (2^l) p (2^l))^{1/2}}{\epsilon_1 p (2^l)}\leq \frac{C \quad E S^4_p} {\epsilon_1 \cdot \epsilon k (2^l) p^2 (2^l)}\\
 \leq& \frac{C \quad p^2 (2^l)}{\epsilon_1 \epsilon k (2^l) p^2 (2^l)} \leq \frac{ C}{\epsilon_1 \cdot \epsilon k (2^l)}.
\end{align*}
The latter implies line (\ref{line23}) and thus (\ref{line5}) is proved.
\end{proof}

\begin{proof}[Proof of Theorem \ref{theo7}] The proof is based on Proposition \ref{theo3}. The verification of lines (\ref{line1}),(\ref{line3}),(\ref{line4}) can be done along the lines of the proof above and is hence omitted.

\end{proof}

\subsection{Bootstrap for U-Statistics}

\begin{lemma}\label{lem13} Let $\left(X_n\right)_{n\in\mathds{N}}$ be a sequence of r.v.'s and $A\subset \left\{1,\ldots,n\right\}^4$. Then there is a constant $C$, such that:
\begin{multline*}
\left|EE^\star\left[\sum_{(i_{1},i_{2},i_{3},i_{4})\in A}h_{2}\left(X_{i_{1}}^\star,X_{i_{2}}^\star\right)h_{2}\left(X_{i_{3}}^\star,X_{i_{4}}^\star\right)\right]\right|\\
\leq C\sum_{i_{1},i_{2},i_{3},i_{4}=1}^{n}\left|E\left[h_{2}\left(X_{i_{1}},X_{i_{2}}\right)h_{2}\left(X_{i_{3}},X_{i_{4}}\right)\right]\right|,
\end{multline*}
where $h_2$ is defined by the Hoeffding-decomposition.
\end{lemma}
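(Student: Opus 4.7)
The plan is to write $E^\star$ explicitly as an average over the independent block-selection variables, and then to swap the order of summation so as to count, for each fixed original-index tuple $(a_1,\ldots,a_4)$, the weighted multiplicity with which it can arise from a bootstrap tuple $(i_1,\ldots,i_4)\in A$.

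First I would introduce the i.i.d.\ uniform random variables $T_1,\ldots,T_k$ on $\{1,\ldots,k\}$ that encode the choice of original block for each bootstrap block, so that $X^\star_i=X_{(T_{l(i)}-1)p+r(i)}$ where $l(i)=\lceil i/p\rceil$ is the bootstrap-block index of $i$ and $r(i)=i-(l(i)-1)p$ is the offset inside that block. For $\vec{i}=(i_1,\ldots,i_4)\in A$ I set $l_m=l(i_m)$, $r_m=r(i_m)$, and $s(\vec{i})=|\{l_1,\ldots,l_4\}|$. Since only the $T_l$ with $l\in\{l_1,\ldots,l_4\}$ appear in the product, conditioning on the data yields
\begin{equation*}
E^\star\bigl[h_2(X^\star_{i_1},X^\star_{i_2})h_2(X^\star_{i_3},X^\star_{i_4})\bigr]=k^{-s(\vec{i})}\sum_{\vec{t}}h_2(X_{a_1},X_{a_2})h_2(X_{a_3},X_{a_4}),
\end{equation*}
where $\vec{t}=(t_l)_{l\in\{l_1,\ldots,l_4\}}\in\{1,\ldots,k\}^{s(\vec{i})}$ and $a_m=a_m(\vec{i},\vec{t})=(t_{l_m}-1)p+r_m$.

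Taking expectation, pulling the absolute value inside via the triangle inequality (once for the sum over $A$ and once for the sum over $\vec{t}$), I obtain
\begin{equation*}
\bigl|EE^\star[\cdots]\bigr|\le\sum_{\vec{i}\in A}k^{-s(\vec{i})}\sum_{\vec{t}}\bigl|E[h_2(X_{a_1},X_{a_2})h_2(X_{a_3},X_{a_4})]\bigr|.
\end{equation*}
Next I would swap the order of summation by summing over $\vec{a}\in\{1,\ldots,kp\}^4$ first and counting the total weight $k^{-s(\vec{i})}$ of all pairs $(\vec{i},\vec{t})$ that produce $\vec{a}$. Once $\vec{a}$ is fixed, the offset $r_m$ and the block index $t_{l_m}$ are forced to equal the offset and block index of $a_m$, respectively; the only remaining freedom lies in choosing the $l_m\in\{1,\ldots,k\}$, subject to the consistency requirement that $l_{m_1}=l_{m_2}$ implies $a_{m_1}$ and $a_{m_2}$ lie in the same original block. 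If $\pi$ denotes the partition of $\{1,2,3,4\}$ induced by equality of the $l_m$ and $\sigma(\vec{a})$ the partition induced by the blocks of the $a_m$, this says precisely that $\pi$ refines $\sigma(\vec{a})$, and for each such $\pi$ there are at most $k^{|\pi|}$ admissible $(l_1,\ldots,l_4)$.

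Since $s(\vec{i})=|\pi|$, the total weight contributed to any fixed $\vec{a}$ is bounded by
\begin{equation*}
\sum_{\pi\preceq\sigma(\vec{a})}k^{|\pi|}\cdot k^{-|\pi|}\le B_4=15,
\end{equation*}
with $B_4$ the fourth Bell number. Substituting back and using $\{1,\ldots,kp\}\subseteq\{1,\ldots,n\}$ yields the claim with $C=15$. The only delicate point of the argument is the combinatorial bookkeeping in this change-of-variables step; the crucial observation that makes everything work is the exact cancellation between the factor $k^{-s(\vec{i})}$ coming from the bootstrap averaging and the multiplicity $k^{|\pi|}$ of admissible block-index assignments, which leaves behind a constant independent of $n$, $k$, and $p$.
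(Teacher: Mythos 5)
Your proof is correct and is essentially the paper's argument in a more systematic packaging: both expand $E^\star$ as a uniform average over the block-selection variables and exploit the exact cancellation between the $k^{-s}$ normalization and the at most $k^{s}$ ways of allocating the bootstrap indices to blocks consistently with a given original-index tuple. The paper carries this out as a case-by-case analysis over the allocation patterns (four distinct blocks, two indices sharing a block, etc.), whereas you fold all cases into one change of variables over partitions of $\{1,2,3,4\}$, which additionally yields the explicit constant $C=B_4=15$.
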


\begin{proof}
By triangle inequality:
\begin{multline*}
\left|EE^\star\left[\sum_{(i_{1},i_{2},i_{3},i_{4})\in A}h_{2}\left(X_{i_{1}}^\star,X_{i_{2}}^\star\right)h_{2}\left(X_{i_{3}}^\star,X_{i_{4}}^\star\right)\right]\right|\\
\leq \frac{1}{(pk)^{2}(pk-1)^{2}}\sum_{(i_{1},i_{2},i_{3},i_{4})\in A}\left|EE^{\star}\left[h\left(X_{i_{1}},X_{i_{2}}\right)h\left(X_{i_{3}},X_{i_{4}}\right)\right]\right|.
\end{multline*}
The bootstrapped expectation of $h_{2}\left(X_{i_{1}}^{\star},X_{i_{2}}^{\star}\right)h_{2}\left(X_{i_{3}}^{\star},X_{i_{4}}^{\star}\right)$ (conditionally on $\left(X_{n}\right)_{n\in\mathds{N}}$) depends on the way the indices $i_{1},i_{2},i_{3},i_{4}$ are allocated to the different blocks. First consider indices $i_{1},i_{2},i_{3},i_{4}$ lying in four different blocks $B_{j_{1}},B_{j_{2}},B_{j_{3}},B_{j_{4}}$ (therefore, $X_{i_{1}}^{\star},\ldots,X_{i_{4}}^{\star}$ are independent for fixed $\left(X_{n}\right)_{n\in\mathds{N}}$). From the construction of the bootstrap sample for any four different blocks $B_{j_{1}},B_{j_{2}},B_{j_{3}},B_{j_{4}}$:
\begin{align*}
&\left|E\left[E^{\star}\left[h_{2}\left(X_{i_{1}}^{\star},X_{i_{2}}^{\star}\right)h_{2}\left(X_{i_{3}}^{\star},X_{i_{4}}^{\star}\right)\right]\right]\right|\\
&=\big|E\big[\frac{1}{k^{4}}\sum_{\substack{1\leq i_1+k_1p\leq n\\1\leq i_2+k_2p\leq n\\1\leq i_3+k_3p\leq n\\1\leq i_4+k_4p\leq n}}h_{2}\left(X_{i_{1}+k_1p},X_{i_{2}+k_2p}\right)h_{2}\left(X_{i_{3}+k_3p},X_{i_{4}+k_4p}\right)\big]\big|\displaybreak[0]\\
&\leq\frac{1}{k^{4}}\sum_{\substack{1\leq i_1+k_1p\leq n\\1\leq i_2+k_2p\leq n\\1\leq i_3+k_3p\leq n\\1\leq i_4+k_4p\leq n}} \left|E\left[h_{2}\left(X_{i_{1}+k_1p},X_{i_{2}+k_2p}\right)h_{2}\left(X_{i_{3}+k_3p},X_{i_{4}+k_4p}\right)\right]\right|\displaybreak[0]\\
\Rightarrow&\sum_{\substack{(i_1,i_2,i_3,i_4)\\ \in (B_{j_{1}}\times B_{j_{2}}\times B_{j_{3}}\times B_{j_{4}})\cap A}}\left|EE^{\star}\left[h\left(X_{i_{1}},X_{i_{2}}\right)h\left(X_{i_{3}},X_{i_{4}}\right)\right]\right|\\
&\leq\frac{1}{k^4}\sum_{i_{1},i_{2},i_{3},i_{4}=1}^{n}\left|E\left[h\left(X_{i_{1}},X_{i_{2}}\right)h\left(X_{i_{3}},X_{i_{4}}\right)\right]\right|
\end{align*}
As there are less than $k^4$ possibilities to choose these four blocks, one gets:
\begin{multline*}
\sum_{\substack{(i_1,i_2,i_3,i_4)\in A\\\mbox{\tiny{4 diff. blocks}}}}\left|EE^{\star}\left[h\left(X_{i_{1}},X_{i_{2}}\right)h\left(X_{i_{3}},X_{i_{4}}\right)\right]\right|\\
\leq\sum_{i_{1},i_{2},i_{3},i_{4}=1}^{n}\left|E\left[h\left(X_{i_{1}},X_{i_{2}}\right)h\left(X_{i_{3}},X_{i_{4}}\right)\right]\right|.
\end{multline*}

As an example, let $i_{1}$ and $i_{2}$ now lie in the same block with $i_{2}-i_{i}=d>0$, while $i_{3}$, $i_{4}$ lie in two further blocks. $X_{i_{1}}^{\star}$ and $X_{i_{2}}^{\star}$ are dependent, the value of $X_{i_2}^{\star}$ is determined by the value of $X_{i_1}^{\star}$ (conditionally on $\left(X_n\right)_{n\in\mathds{N}}$). To repair this, add up the expected values for all $i_{2}$ such that $i_2$ in the same block as $i_{1}$ and take into account that there are at most $k^{3}$ possibilities for $i_{1},i_{3},i_{4}$:
\begin{align*}
&\left|E\left[E^{\star}\left[h_{2}\left(X_{i_{1}}^{\star},X_{i_{2}}^{\star}\right)h_{2}\left(X_{i_{3}}^{\star},X_{i_{4}}^{\star}\right)\right]\right]\right|\\
&\leq\frac{1}{k^{3}}\sum_{\substack{1\leq i_1+k_1p\leq n-d\\1\leq i_3+k_3p\leq n\\1\leq i_4+k_4p\leq n}}\left|E\left[h_{2}\left(X_{i_{1}},X_{i_{1}+d}\right)h_{2}\left(X_{i_{3}},X_{i_{4}}\right)\right]\right|\displaybreak[0]\\ 
\Rightarrow&\sum_{\substack{i_{2}\\(i_1,i_2,i_3,i_4)\in A\\i_2\mbox{\tiny{ in same block as }} i_1}}\left|E\left[E^{\star}\left[h_{2}\left(X_{i_{1}}^{\star},X_{i_{2}}^{\star}\right)h_{2}\left(X_{i_{3}}^{\star},X_{i_{4}}^{\star}\right)\right]\right]\right|\\
&\leq \frac{1}{k^{3}}\sum_{i_{1},i_{2},i_{3},i_{4}=1}^{n}\left|E\left[h_{2}\left(X_{i_{1}},X_{i_{2}}\right)h_{2}\left(X_{i_{3}},X_{i_{4}}\right)\right]\right|\\
 \Rightarrow&\sum_{\substack{(i_1,i_2,i_3,i_4)\in A\\i_2\mbox{\tiny{ in same block as }} i_1}}\left|E\left[E^{\star}\left[h_{2}\left(X_{i_{1}}^{\star},X_{i_{2}}^{\star}\right)h_{2}\left(X_{i_{3}}^{\star},X_{i_{4}}^{\star}\right)\right]\right]\right|\\
&\leq\sum_{i_{1},i_{2},i_{3},i_{4}=1}^{n}\left|E\left[h_{2}\left(X_{i_{1}},X_{i_{2}}\right)h_{2}\left(X_{i_{3}},X_{i_{4}}\right)\right]\right|
\end{align*}
When the indices are allocated to the blocks in another way, analogous arguments can be used, which completes the proof.
\end{proof}

\begin{proof}[Proof of Theorem \ref{theo8}] The proof under condition 1 (strong mixing) is similar to the proof of Theorem 2.2 of Dehling and Wendler \cite{deh2} and hence omitted. Under near epoch dependence, first note that by Lemma \ref{lem14} $\left(h_1(X_n)\right)_{n\in\N}$ is near epoch dependent with approximations constants $a'_k\leq Ca_k^{\frac{1+\delta}{1+2\delta}}\leq Ck^{-\frac{(4+3\delta)(1+\delta)}{\delta(1+2\delta)}}$, so $\sum_{k=1}^\infty a_k'^{\frac{\delta}{1+\delta}}<\infty$ and by Theorem \ref{theo5}
\begin{multline*}
 \sup_{x\in\mathds{R}}\left|P^{\star}\left[\frac{2}{\sqrt{pk}}\sum_{i=1}^{pk}\left(h_{1}\left(X_{i}^{\star}\right)-E^\star h_{1}\left(X_{i}^{\star}\right)\right)\leq x\right]-P\left[\frac{2}{\sqrt{n}}\sum_{i=1}^{n}h_{1}\left(X_{i}\right)\leq x\right]\right|\rightarrow0
\end{multline*}
in probability as $n\rightarrow\infty$. Furthermore, by Lemma \ref{lem12} and Lemma \ref{lem13}, we have that
\begin{align*}
E\left(\sqrt{n}U_n(h_2)\right)^2&\rightarrow0\\
EE^\star\left(\sqrt{n}U_n(h_2)\right)^2\leq C\frac{1}{n^3}\sum_{i_{1},i_{2},i_{3},i_{4}=1}^{n}\left|E\left[h_{2}\left(X_{i_{1}},X_{i_{2}}\right)h_{2}\left(X_{i_{3}},X_{i_{4}}\right)\right]\right|&\rightarrow0.
\end{align*}
The Chebyshev inequality completes the proof.

\end{proof}

\begin{proof}[Proof of Theorem \ref{theo9}] We first show that
\begin{equation*}
 P\left[\sqrt{pk}U_{n}^{\star}\left(h_2\right)\rightarrow 0\right]=E\left[P^\star\left[\sqrt{pk}U_{n}^{\star}\left(h_2\right)\rightarrow 0\right]\right]=1.
\end{equation*}
With Fubinis theorem, we will then conclude that
\begin{equation}
 P^\star\left[\sqrt{pk}U_{n}^{\star}\left(h_2\right)\rightarrow 0\right]=1\ \text{a.s.}\label{line33}
\end{equation}
We set
\begin{equation*}
 Q_n^\star=\sum_{1\leq i_1<i_2\leq pk}h_2\left(X^\star_{i_1},X^\star_{i_2}\right)\ \text { and }\ b_n=\frac{1}{\sqrt{pk}\left(pk-1\right)}.
\end{equation*}
With the method of subsequences, it suffices to show that
\begin{align}
b_{2^l}Q_{2^l}^{\star}\left(h_2\right)&\rightarrow0\ \text{a.s.,} \label{line29}\\
\max_{2^{l-1}\leq n<2^l}\left|b_nQ_n^{\star}-b_{2^{l-1}}Q_{2^{l-1}}^{\star}\right|&\rightarrow0\ \text{a.s.,}\label{line30}
\end{align}
as $l\rightarrow\infty$. By the conditions 1. or 2. of the theorem and Lemma \ref{lem12}, there exists a $\eta>0$ such that
\begin{equation}
\sum_{i_{1},i_{2},i_{3},i_{4}=1}^{n}\left|E\left[h_{2}\left(X_{i_{1}},X_{i_{2}}\right)h_{2}\left(X_{i_{3}},X_{i_{4}}\right)\right]\right|=O\left(n^{3-\eta}\right)\label{line31}.
\end{equation}
We use Chebyshev inequalitity and Lemma \ref{lem13} to prove line (\ref{line29}). For every $\epsilon>0$:
\begin{align*}
&\sum_{l=1}^{\infty}P\left[\left|b_{2^l}Q_{2^l}^\star\left(h_2\right)\right|>\epsilon\right]\leq\frac{1}{\epsilon^2}\sum_{l=1}^{\infty}b_{2^l}^2EE^\star\left[Q_{2^l}^{\star 2}\left(h_2\right)\right]\\
\leq& C\frac{1}{\epsilon^2}\sum_{l=1}^{\infty}b_{2^l}^2\sum_{i_1,i_2,i_3,i_4=1}^{2^{l}}\left|E\left[h_2\left(X_{i_1},X_{i_2}\right)h_2\left(X_{i_3},X_{i_4}\right)\right]\right|\leq C\frac{1}{\epsilon^2}\sum_{l=1}^{\infty}2^{-\eta l}<\infty.
\end{align*}
line (\ref{line29}) follows with the Borel-Cantelli lemma. To prove (\ref{line30}), we first have to find a bound for the second moments, using a well-known chaining technique:
\begin{multline*}
\max_{2^{l-1}\leq n<2^l}\left|b_nQ_n^\star-b_{2^{l-1}}Q_{2^{l-1}}^\star\right|\\
\leq \sum_{d=1}^{l} \max_{i=1,\ldots,2^{l-d}}\left|b_{2^{l-1}+i2^{d-1}}Q_{2^{l-1}+i2^{d-1}}^\star-b_{2^{l-1}+(i-1)2^{d-1}}Q_{2^{l-1}+(i-1)2^{d-1}}^\star\right|.
\end{multline*}
As for any random variables $Y_1,\ldots,Y_n$: $E\left(\max_{i=1,\ldots,n}\left|Y_i\right|\right)^2\leq \sum_{i=1}^n EY_i^2$, it follows that

\begin{align*}
&EE^\star\left[\left(\max_{2^{l-1}\leq n<2^l}\left|b_nQ_n^\star-b_{2^{l-1}}Q_{2^{l-1}}^\star\right|\right)^2\right]\\
\leq& l\sum_{d=1}^{l}\sum_{i=1}^{2^{l-d}}EE^\star\left[\left(b_{2^{l-1}+i2^{d-1}}Q_{2^{l-1}+i2^{d-1}}^\star-b_{2^{l-1}+(i-1)2^{d-1}}Q_{2^{l-1}+(i-1)2^{d-1}}^\star\right)^2\right]\displaybreak[0]\\
\leq& l\sum_{d=1}^{l}\sum_{i=1}^{2^{l-d}}2b_{2^{l-1}+i2^{d-1}}^2EE^\star\left[\left(Q^\star_{2^{l-1}+i2^{d-1}}-Q^\star_{2^{l-1}+(i-1)2^{d-1}}\right)^2\right]\\
&+l\sum_{d=1}^{l}\sum_{i=1}^{2^{l-d}}2\left(b_{2^{l-1}+i2^{d-1}}-b_{2^{l-1}+(i-1)2^{d-1}}\right)^2EE^\star\left[Q^{\star 2}_{2^{l-1}+(i-1)2^{d-1}}\right]\displaybreak[0]\\
=&\sum_{d=1}^{l}2b_{2^{l-1}+i2^{d-1}}^2EE^\star\left[\sum_{i=1}^{2^{l-d}}\left(Q^\star_{2^{l-1}+i2^{d-1}}-Q^\star_{2^{l-1}+(i-1)2^{d-1}}\right)^{2}\right]\\
&+l\sum_{d=1}^{l}\sum_{i=1}^{2^{l-d}}2\left(b_{2^{l-1}+i2^{d-1}}+b_{2^{l-1}+(i-1)2^{d-1}}\right)\left(b_{2^{l-1}+i2^{d-1}}-b_{2^{l-1}+(i-1)2^{d-1}}\right)\\
&\quad \cdot EE^\star\left[Q^{\star 2}_{2^{l-1}+(i-1)2^{d-1}}\right]\displaybreak[0]\\
\leq& l^26b^2_{2^{l-1}}\sum_{i_{1},i_{2},i_{3},i_{4}=1}^{2^l}\left|E\left[h_{2}\left(X_{i_{1}},X_{i_{2}}\right)h_{2}\left(X_{i_{3}},X_{i_{4}}\right)\right]\right|\leq Cl^22^{-\eta l}.
\end{align*}
In the last line we used the fact that the sequence $\left(b_{n}\right)_{n\in\mathds{N}}$ is decreasing, Lemma \ref{lem13} and (\ref{line31}). It now follows for all $\epsilon>0$
\begin{equation*}
\sum_{l=1}^{\infty}P\left[\max_{2^{l-1}\leq n<2^l}\left|a_nQ_n-a_{2^{l-1}}Q_{2^{l-1}}\right|>\epsilon\right]\leq\frac{C}{\epsilon^2}\sum_{l=1}^{\infty}l^22^{-\eta l}<\infty,
\end{equation*}
the Borel-Cantelli Lemma completes the proof of line (\ref{line30}). Furthermore, we have that
\begin{equation*}
\left(E\left[Q_n^\star\right]\right)^2\leq E\left[Q_n^{\star2}\right]
\end{equation*}
and conclude that $\frac{1}{\sqrt{pk}\left(pk-1\right)}E\left[Q_n^\star\right]\rightarrow 0$ a.s. We use now the Hoeffding-decomposition
\begin{multline*}
 \sqrt{pk}\left(U^{\star}_{n}\left(h\right)-E^\star\left[U^{\star}_{n}\left(h\right)\right]\right)=\frac{2}{\sqrt{pk}}\sum_{i=1}^{pk}\left(h_{1}\left(X_{i}^{\star}\right)-E^\star\left[h_{1}\left(X_{i}^{\star}\right)\right]\right)\\
+\frac{2}{\sqrt{pk}\left(pk-1\right)}\left(\sum_{1\leq i<j\leq pk}h_{2}\left(X^{\star}_{i},X^{\star}_{j}\right)-E^\star\left[\sum_{1\leq i<j\leq pk}h_{2}\left(X^{\star}_{i},X^{\star}_{j}\right)\right]\right).
\end{multline*}
By Proposition 2.11 and Lemma 2.15 of Borovkova et al. \cite{boro}, we have that $\left(h_1(X_n)\right)_{n\in\N}$ is near epoch dependent with approximation constants $\tilde{a_l}\leq C\left(\sqrt{a_l}+a_l^{\frac{3+\delta}{8+\delta}}\right)$. So we can apply Theorem \ref{theo6} to obtain
\begin{multline*}
 \sup_{x\in\mathds{R}}\left|P^{\star}\left[\frac{2}{\sqrt{pk}}\sum_{i=1}^{pk}\left(h_{1}\left(X_{i}^{\star}\right)-E^\star h_{1}\left(X_{i}^{\star}\right)\right)\leq x\right]-P\left[\frac{2}{\sqrt{n}}\sum_{i=1}^{n}h_{1}\left(X_{i}\right)\leq x\right]\right|\\
\rightarrow0\ \text{ a.s.}
\end{multline*}
and by Theorem 2.1 of Dehling, Wendler \cite{deh2}
\begin{equation*}
\sqrt{n}U_n\left(h_2\right)\rightarrow0\ \text{ a.s.}
\end{equation*}
Since $\sqrt{pk}U_n^{\star}\left(h_2\right)\rightarrow0$, $\sqrt{pk}E^\star U_n^{\star}\left(h_2\right)\rightarrow0$ a.s. have been already proved, (\ref{line17}) follows with the Lemma of Slutzky. To prove (\ref{line16}), first recall that by Theorem \ref{theo6}
\begin{equation*}
\Var^\star\left[\sqrt{pk}\sum_{i=1}^{kp}h_1\left(X_i^\star\right)\right]-\Var\left[\sqrt{n}\sum_{i=1}^{n}h_1\left(X_i\right)\right]\rightarrow0\ \text{a.s.},
\end{equation*}
and by Lemma \ref{lem12} $\Var \sqrt{n}U_n\left(h_2\right)\rightarrow0$. Similar to the proof of (\ref{line33}), one can show that $\Var^\star \sqrt{pk}U_n^\star\left(h_2\right)\ \rightarrow0$ a.s., so (\ref{line16}) follows, which completes the proof.
\end{proof}

\section*{Acknowledgment}

We are very grateful to Herold Dehling, who encouraged us to study this topic and discussed it with us many times. Olimjon Sharipov was supported by DFG (German Research Foundation). Martin Wendler was supported by the Studienstiftung des deutschen Volkes.

\end{document}